\documentclass{article}

\usepackage[T1]{fontenc}
\usepackage{lmodern}
\usepackage[utf8x]{inputenx}
\usepackage{framed}
\usepackage{listings}
\usepackage{vmargin}
\usepackage{mathrsfs, mathenv}
\usepackage{amsmath, amsthm, amssymb, amsfonts, amscd}
\usepackage{graphicx}
\usepackage{epstopdf}
\usepackage[svgnames]{xcolor}
\usepackage{hyperref}
\usepackage[capitalise]{cleveref}
\hypersetup{citecolor=blue, colorlinks=true, linkcolor=black}
\setlength{\parskip}{6pt}
\setlength\parindent{0pt}
\usepackage{cite}
\usepackage{pgfplots}
\usepackage{tikz}
\usepackage{color}
\usepackage{lipsum}
\usepackage[ruled, vlined]{algorithm2e}
\crefname{algocf}{Algorithm}{Algorithms}
\usepackage{autonum}
\usepackage[usestackEOL]{stackengine}
\usepackage{stmaryrd} 
\usepackage[title]{appendix}

\theoremstyle{plain}
\newtheorem{theorem}{Theorem}[section]

\newtheorem{lemma}[theorem]{Lemma}

\numberwithin{equation}{section}

\theoremstyle{definition}
\newtheorem{definition}[theorem]{Definition}

\theoremstyle{remark}
\newtheorem{remark}[theorem]{Remark}
\newtheorem{assumption}[theorem]{Assumption}
\newtheorem{example}[theorem]{Example}

\ifpdf
  \DeclareGraphicsExtensions{.eps,.pdf,.png,.jpg}
\else
  \DeclareGraphicsExtensions{.eps}
\fi

\usepackage{mathtools}

\usepackage{booktabs}

\usepackage{pgfplots}
\usepackage{tikz}
\usetikzlibrary{patterns,arrows,decorations.pathmorphing,backgrounds,positioning,fit,matrix}
\usepackage[labelfont=bf]{caption}
\setlength{\belowcaptionskip}{-5pt}
\usepackage{here}
\usepackage[font=normal]{subcaption}

\usepackage{enumitem}
\setlist[itemize]{leftmargin=.5in}
\setlist[enumerate]{leftmargin=.5in,topsep=3pt,itemsep=3pt,label=(\roman*)}



\newcommand{\email}[1]{\href{#1}{#1}}
\newcommand{\TheTitle}{A probabilistic finite element method based on random meshes: Error estimators and Bayesian inverse problems} 
\newcommand{\TheAuthors}{A. Abdulle, G. Garegnani}
\title{{\TheTitle}}
\newcommand*\samethanks[1][\value{footnote}]{\footnotemark[#1]}
\author{Assyr Abdulle\thanks{Institute of Mathematics, \'Ecole Polytechnique F\'ed\'erale de Lausanne (\email{assyr.abdulle@epfl.ch}, \email{giacomo.garegnani@epfl.ch})}
	\and
	Giacomo Garegnani\samethanks}
\date{}

\usepackage{amsopn}

\newcommand{\abs}[1]{\left\lvert #1 \right\rvert}
\newcommand{\norm}[1]{\left\lVert #1 \right\rVert}
\newcommand{\dbrack}[1]{\left\llbracket#1\right\rrbracket}
\renewcommand{\phi}{\varphi}
\renewcommand{\theta}{\vartheta}

\newcommand{\iid}{\ensuremath{\stackrel{\text{i.i.d.}}{\sim}}}

\newcommand{\R}{\mathbb{R}}

\newcommand{\defeq}{\coloneqq}
\newcommand{\eqdef}{\eqqcolon}

\newcommand{\E}{\operatorname{\mathbb{E}}}
\newcommand{\Eb}[1]{\E\left[#1\right]}

\newcommand{\Hell}{d_{\mathrm{H}}}
\newcommand{\dd}{\, \mathrm{d}}
\renewcommand{\d}{\mathrm{d}}

\newcommand{\eval}[1]{\bigr\rvert_{#1}}

\ifpdf
\hypersetup{
	pdftitle={\TheTitle},
	pdfauthor={\TheAuthors}
}
\fi

\definecolor{leg1}{RGB}{0,114,189}
\definecolor{leg2}{RGB}{217,83,25}
\definecolor{leg3}{RGB}{237,177,32}
\definecolor{leg4}{RGB}{126,47,142}
\definecolor{leg5}{RGB}{119,172,48}

\definecolor{leg21}{RGB}{62,38,169}
\definecolor{leg22}{RGB}{46,135,247}
\definecolor{leg23}{RGB}{55,200,151}
\definecolor{leg24}{RGB}{254,195,56}

\begin{document}
\maketitle 

\begin{abstract} We present a novel probabilistic finite element method (FEM) for the solution and uncertainty quantification of elliptic partial differential equations based on random meshes, which we call random mesh FEM (RM-FEM). Our methodology allows to introduce a probability measure on standard piecewise linear FEM. We present a posteriori error estimators based uniquely on probabilistic information. A series of numerical experiments illustrates the potential of the RM-FEM for error estimation and validates our analysis. We furthermore demonstrate how employing the RM-FEM enhances the quality of the solution of Bayesian inverse problems, thus allowing a better quantification of numerical errors in pipelines of computations. 
\end{abstract}

\textbf{AMS subject classifications.} 62F15, 65N21, 65N30, 65N50, 65N75.

\textbf{Keywords.} Probabilistic methods for PDEs, Random meshes, Uncertainty quantification, A posteriori error estimators, Bayesian inverse problems

\section{Introduction}

In recent years, there has been a growing interest in developing and analyzing probabilistic counterparts of traditional numerical methods spanning most areas of computational mathematics. This gave rise to the field of Probabilistic Numerics (PN), whose founding principles and aims are summarized in the review papers \cite{OaS19, HOG15, COS19}. All methods belonging to the field of PN share the idea of introducing a probability measure on the solution of traditional numerical methods. The underlying rationale is to quantify the uncertainty due to numerical errors in a probabilistic manner, rather than with standard error estimates. Indeed, a probability measure over approximate solutions can be readily pushed through a pipeline of computations, thus justifying the need of probabilistic methods especially when the solution of the problem at hand is employed as the input of a subsequent analysis. A typical example of computational pipelines for which probabilistic methods are successfully employed is given by Bayesian inverse problems, where introducing a probability measure on the forward model allows for a better quantification of the uncertainty in the inversion procedure.

\subsection{Literature Review}

Several contributions to the field of PN concern differential equations. For ordinary differential equations (ODEs), the methodologies can be roughly split in two different areas. In \cite{Ski92, KeH16, KSH20, TKS19, SSH19, CCC16, SDH14} the authors present a series of schemes which rely in different measure on Bayesian filtering techniques. These methodologies proceed by updating Gaussian measures over the numerical solution with filtering formulae and evaluations of the right-hand side of the ODE, which are interpreted as observations. While being not involved computationally, analyzing the convergence properties of this class of methods is not always possible, and one can only marginally rely on standard techniques for this purpose. A valuable effort in this sense can be found in \cite{KSH20}, where the authors show rates of convergence of the mean of the Gaussian measure towards the exact solution. A different approach is presented in the series of works \cite{CGS17, LSS19b, AbG20, TLS18, TZC16}, where the authors propose probabilistic schemes which are based on perturbing randomly the approximate solution and on letting evolve these perturbations through the dynamics of the ODE. In this manner, it is possible to obtain empirical probability measures over the otherwise deterministic numerical solution. A random perturbation can be applied directly to the state, as it was presented and analysed for one-step methods in \cite{CGS17, LSS19b}, with a particular focus on implicit schemes in \cite{TLS18} and for multistep methods in \cite{TZC16}. Another approach, which was presented in \cite{AbG20}, consists in perturbing the scheme itself by randomizing the time steps of a Runge--Kutta method. This allows to maintain certain geometric properties of the deterministic scheme in its probabilistic counterpart, such as the conservation of invariants or the symplecticity. 

There has been a keen interest from the PN community on developing probabilistic numerical solvers for partial differential equations (PDEs), too \cite{COS17, COS17b, OCA19, CCC16, CGS17, Owh15, Owh17, OwZ17, RPK17, RPK17b,GFY21}. In \cite{COS17b}, the authors present a meshless Bayesian method for PDEs, which they then apply to inverse problems in \cite{COS17}, and in particular to a challenging time-dependent instance drawn from an engineering application in \cite{OCA19}. Their methodology consists of placing a Gaussian prior on the space of solutions, thus updating it with evaluations of the right-hand side, which are interpreted as noisy observations. A similar idea has been presented in \cite{CCC16}, where the main focus are time-dependent problems, and in \cite{RPK17, RPK17b}, where the method is recast in the framework of machine learning algorithms. In \cite{Owh17, OwZ17}, a probabilistic approach involving gamblets is applied to the solution of PDEs with rough coefficients and by multigrid schemes, with a particular interest to reducing the complexity of implicit algorithms for time-dependent problems \cite{OwZ17}. Moreover, in \cite{Owh15} the author presents a Bayesian reinterpretation of the theory of homogenization for PDEs, which can be seen as a contribution to the field of PN. To our knowledge, the only perturbation-based finite element (FE) probabilistic scheme for PDEs is presented in \cite{CGS17}, where the authors randomize FE bases by adding random fields endowed with appropriate boundary conditions, thus obtaining an empirical measure over the space of solutions. By tuning the covariance of these random fields, they obtain a consistent characterization of the numerical error, which can then be employed to solve Bayesian inverse problems and to quantify the uncertainty over their numerical solution.

\subsection{Our Contributions}

In this work we present a probabilistic finite element method (FEM) which is based on a randomization of the mesh, and which we call RM-FEM. The idea underlying our method stems from both \cite{CGS17}, where the authors propose a probabilistic FEM based on random perturbations, and from \cite{AbG20}, in which the first instance of randomizing the discretization instead of the solution itself is presented. In the context of ODEs, a careful randomization of the time step in Runge--Kutta methods allows to maintain certain convergence and geometric properties, either path-wise or in the mean-square sense. In a similar fashion, creating a probability measure on the space of solutions by randomizing the mesh has the advantage that each sample is a FEM solution itself, and therefore a projection of the exact solution on some random finite-dimensional space. 

Keeping in mind the fundamental goal of PN, we consider the problem of employing probabilistic methods to quantify numerical errors in the context of PDEs. Indeed, in \cite{CGS17, AbG20, KSH20} and other works concerning ODEs, the authors show that the probabilistic solution converges to the true solution with the same rate as the deterministic method, which represents a consistency result. No work so far shows that PN methods can be readily employed for an a posteriori estimation of the error. Some forms of adaptivity for nonlinear ODEs based on probabilistic information can be found in \cite{ChC19, SSH19, BHT20}, where the arguments are based on heuristics but are not rigorously analyzed. In this work, we construct and present a posteriori error estimators which can be readily employed for mesh adaptation in elliptic PDEs. Our estimators are entirely based on probabilistic information, are simple to compute and do not entail considerable computational cost. We present an analysis in the one-dimensional case that shows that our error estimators based on the RM-FEM are equivalent to a classical estimator by Babu\v{s}ka and Rheinboldt \cite{BaR81}, which employs the jumps of the derivative of the solution at the nodes to quantify the numerical errors. Our one-dimensional theoretical analysis is complemented by a series of numerical experiments confirming the validity of our theory in higher dimensions.

As stated above, probabilistic numerical methods are especially appealing when employed in pipeline of computations such as Bayesian inverse problems. In particular, employing deterministic methods for approximating forward maps leads to overly confident posterior measures, which can be corrected by appropriate probabilistic approximations. Similarly to \cite{CGS17, AbG20}, we show in this paper how the RM-FEM can be employed to construct empirical distributions over the forward problem and compute a random posterior measure, solution to the inverse problem in the Bayesian sense. The solution is consistent asymptotically with respect to the mesh spacing, but its quality is enhanced if the latter is relatively large, i.e., if the forward model is approximated cheaply. 

\subsection{Outline}

The outline of the paper is as follows. In \cref{sec:RMFEM} we state the problem of interest, introduce the RM-FEM and the main assumptions and notation required by our analysis. We then present the two main applications of the RM-FEM, i.e., a posteriori error estimators and Bayesian inverse problems, in \cref{sec:APosteriori,sec:BIP}, respectively. For both applications, a series of numerical experiments in the one and two-dimensional cases illustrate the usefulness and efficiency of the RM-FEM. In \cref{sec:ProbErrEst} we present a rigorous a priori and a posteriori error analysis. Finally, in \cref{sec:Conclusion} we draw our conclusions.

\section{Random Mesh Finite Element Method}\label{sec:RMFEM}

\subsection{Notation}

Let $d = 1, 2, 3$ and $D \subset \R^d$ be an open bounded domain with sufficiently smooth boundary $\partial D$. For $v \in \R^d$, we denote by $\norm{v}_2$ the Euclidean norm on $\R^d$. We denote by $L^2(D)$ the space of square integrable functions, by $(\cdot, \cdot)$ the natural $L^2(D)$ inner product, and by $H^p(D)$ the Sobolev space of functions with $p$ weak derivatives in $L^2(D)$. Moreover, we denote by $H^1_0(D)$ the space of functions in $H^1(D)$ vanishing on $\partial D$ in the sense of traces, by $H^{-1}(D)$ the dual of $H_0^1(D)$ and by $\langle \cdot, \cdot \rangle$ the natural pairing between $H^{-1}(D)$ and $H_0^1(D)$. We equip the space $H^1_0(D)$ with the norm $\norm{v}_{H_0^1(D)} = \norm{\nabla v}_{L^2(D)}$, i.e. the $H^1(D)$ seminorm. 

For an event space $\Omega$, with a $\sigma$-algebra $\mathcal A$ and a probability measure $P$, we let the triple $(\Omega, \mathcal A, P)$ denote a probability space. For an event $A \in \mathcal A$, we say that $A$ occurs almost surely (a.s.) if $P(A) = 1$. For $n \in \mathbb N$ we call random variables the measurable functions $X \colon \Omega \to \R^n$, and denote by $L^2(\Omega)$ the space of square integrable random variables, with associated inner product. Denoting by $\mathcal B(\R^n)$ the Borel $\sigma$-algebra on $\R^n$, we say that a probability measure $\mu_X$ on the measurable space $(\R^n, \mathcal B(\R^n))$ satisfying $\mu_X(B) = P(X^{-1}(B))$ for all $B \in \mathcal B(\R^n)$ is the measure induced by $X$, or equivalently the distribution of $X$. For a set of random variables $\{X_i\}_{i=1}^n$ which are independent and identically distributed, we say they are i.i.d., and denoting by $\mu$ their common induced measure on $(\R^n, \mathcal B(\R^n))$, we write $\{X_i\}_{i=1}^n \iid \mu$. 

\subsection{Problem and Method Presentation}
Let $\kappa \in L^\infty(D, \R^{d\times d})$, $f \in H^{-1}(D)$ and $u$ be the weak solution of the partial differential equation (PDE)
\begin{equation}\label{eq:PDE}
\begin{alignedat}{2}
	-\nabla \cdot (\kappa \nabla u) &= f, \quad &&\text{in } D, \\
	u &= 0, &&\text{on } \partial D,
\end{alignedat}
\end{equation}
i.e., the function $u \in V \equiv H^1_0(D)$ satisfying
\begin{equation}\label{eq:PDEWeak}
	a(u, v) = F(v), \qquad a(u, v) \defeq \int_D \kappa \nabla u \cdot \nabla v \dd x, \quad F(v) \defeq \langle f, v \rangle,
\end{equation}
for all functions $v \in V$. We assume there exist positive constants $\underline{\kappa}$ and $\bar \kappa$ such that for all $\xi \in \R^d$
\begin{equation}
	\underline \kappa \norm{\xi}_2^2 \leq \kappa \xi \cdot \xi \leq \bar \kappa \norm{\xi}_2^2,
\end{equation}
where $\norm{\cdot}_2$ is the Euclidean norm on $\R^d$, so that there exist constants $m, M > 0$ such that for all $u, v \in V$ it holds
\begin{equation}
	\abs{a(u, v)} \leq M\norm{u}_V \norm{v}_V, \quad \abs{a(u, u)} \geq m \norm{u}_V^2.
\end{equation}
The Lax--Milgram theorem then guarantees that the problem \eqref{eq:PDEWeak} is well-posed.

Let $N$ be a positive integer and let $\mathcal T_h = \bigcup_{i=1}^N K_i$ be a partition of $D$, where for all $i = 1, \ldots, N$, the element $K_i \subset D$ is a segment, triangle or tetrahedron for $d = 1, 2, 3$ respectively. We denote by $h_i = \mathrm{diam}(K_i)$ the radius of the smallest ball containing $K_i$, and by $h = \max_i h_i$ the maximum radius, indexing the mesh $\mathcal T_h$. We denote by $\mathcal V_h$ the set of all vertices of the elements of $\mathcal T_h$, and in particular as $\mathcal V_h^I \subset \mathcal V_h$ the set of vertices which do not lie on the boundary of $D$, and by $\mathcal V_h^B = \mathcal V_h \setminus \mathcal V_h^I$. Moreover, we denote by $N_I$ the number of internal vertices, i.e., $N_I = \abs{\mathcal V_h^I}$. We assume the partition to be conforming, i.e., if two elements have non-empty intersection, than the latter consists of a point (for $d = 1$), of either a vertex or a side (for $d = 2$), and of either a vertex, a segment or a face (for $d = 3$). We then denote by $V_h \subset V$, $\dim(V_h) < \infty$ the space of continuous piecewise linear finite elements on $\mathcal T_h$, i.e., 
\begin{equation}\label{eq:FESpace}
	V_h \defeq \{v \in V \colon v\eval{K} \in \mathbb P_1, \; \forall K \in \mathcal T_h\},
\end{equation}	
where $\mathbb P_1$ is the space of linear functions. Let us remark that imposing $u_h = 0$ on $\partial D$ yields $\dim(V_h) = N_I$. The FEM proceeds by finding $u_h \in V_h$ such that
\begin{equation}\label{eq:FESolution}
	a(u_h, v_h) = F(v_h),
\end{equation}
for all $v_h \in V_h$, which is equivalent to solving the linear system $A\mathbf u = \mathbf f$, where
\begin{equation}
	 \mathbf u_j = u_h(x_j), \quad x_j \in \mathcal V_h^I, \quad A_{ij} = a(\phi_j, \phi_i), \quad \mathbf f_j = F(\phi_j), \quad i,j = 1, \ldots, N_I,
\end{equation}
and where $\{\phi_j\}_{j=1}^{N-1}$ are the Legendre basis functions defined on the internal vertices of $\mathcal T_h$. The assumptions on $\kappa$ guarantee that $A$ is symmetric positive definite, and in turn that $\mathbf{u}$ is uniquely defined and the problem \eqref{eq:FESolution} is well-posed. 

We now introduce the random-mesh finite element method (RM-FEM), which is based on a random perturbation of the mesh $\mathcal T_h$ obtained by moving the internal vertices. First, we here detail how we build perturbed meshes and which kind of random perturbations we consider to be admissible. Let $p \geq 1$, $\alpha \defeq \{\alpha_i \colon \Omega \to \R^{d}\}_{i=1}^{N_I}$ be a sequence of random variables and let us define the set of internal points $\widetilde{\mathcal V}_h^I = \{\widetilde x_i\}_{i=1}^{N_I}$ where
\begin{equation}\label{eq:PerturbedPoints}
	\widetilde x_i \defeq x_i + h^p \alpha_i.
\end{equation}
We then define the set of perturbed vertices as $\widetilde{\mathcal V}_h = \widetilde{\mathcal V}_h^I \cup \mathcal V_h^B$, i.e., the vertices on the boundary are left unchanged. The perturbed mesh is then simply $\widetilde{\mathcal T}_h = \bigcup_{i=1}^N \widetilde K_i$, where each element $\widetilde K_i$ has the same vertices as its corresponding element $K_i$ in the original mesh, modulo the random perturbation \eqref{eq:PerturbedPoints}. In other words, we compute the internal points of the perturbed mesh following \eqref{eq:PerturbedPoints}, and keep the connectivity structure of the original mesh $\mathcal T_h$. Clearly, the mesh so defined is not conforming for any sequence of random variable $\alpha$, for which we therefore introduce an assumption.

\begin{assumption}\label{as:meshPerturbation} The sequence of random variables $\alpha$ is such that 
	\begin{enumerate}
		\item\label{as:meshPerturbation_sym} its components $\alpha_i$ admit densities $F_{\alpha_i}$ with respect to the Lebesgue measure on $\R^d$, which satisfy $\mathrm{supp}(F_{\alpha_i}) \subset B_{r_i}$, where $B_{r_i} \subset \R^d$ is the ball centered in the origin and of radius $r_i > 0$, and which are radial, i.e., $F_{\alpha_i}(x) = F_{\alpha_i}(\norm{x}_2)$,
		\item\label{as:meshPerturbation_conf} the perturbed mesh $\widetilde{\mathcal T}_h$ is conforming a.s.
	\end{enumerate}
\end{assumption}
Let us remark that the assumption \ref{as:meshPerturbation_sym} actually implies for all $p \geq 1$ the assumption \ref{as:meshPerturbation_conf} a.s., provided the radii $r_i$ are chosen small enough. We assume in \ref{as:meshPerturbation_sym} the densities $F_{\alpha_i}$ to be radial functions so that the random perturbations do not have a privileged direction. 

\begin{example}\label{ex:ExRandomPerturbation} In the one-dimensional case, let $0 = x_0 < x_1 < \ldots < x_N = 1$ so that we have $N_I = N-1$. Denoting $K_i = (x_i, x_{i-1})$ we call $\bar h_i$ the minimum element size for the two intervals sharing the point $x_i$ as a vertex, i.e., $\bar h_i \defeq \min\{h_i, h_{i+1}\}$. Then, a choice of random variables satisfying \cref{as:meshPerturbation} is given by
	\begin{equation}
		\alpha_i =  \left(h^{-1}\bar h_i\right)^p \bar \alpha_i, \quad i = 1, \ldots, N-1, \quad \{\bar \alpha_i\}_{i=1}^{N-1} \iid \mathcal U\left(\left(-\frac12, \frac12\right)\right),
	\end{equation}
	where for a set $D \in \R^d$ we denote by $\mathcal U(D)$ the uniform distribution over $D$. With this choice, indeed, we have that $\widetilde x_i < \widetilde x_{i-1}$ a.s., and therefore the perturbed mesh is conforming. In the two-dimensional case, we introduce for $i = 1, \ldots, N_I$ the notation
  	\begin{equation}
  		\Delta_i = \{K \in \mathcal T_h\colon K \text{ has } x_i \text{ as a vertex} \}.
  	\end{equation}
  	Analogously to the one-dimensional case, we write $\bar h_i \defeq \min_{j:K_j \in \Delta_i}  h_j$. In this case, it is possible to verify that choosing for all $i = 1, \ldots, N_I$
  	\begin{equation}
  		\alpha_i = (h^{-1}\bar h_i)^p \bar \alpha_i, \quad i = 1, \ldots, N_I, \quad \{\bar \alpha_i\}_{i=1}^{N_I} \iid \mathcal U\left(B_{1/2}\right),
  	\end{equation}
  	then $\alpha$ satisfies \cref{as:meshPerturbation}. We verify this graphically in \cref{fig:Mesh}, where we show a realization of the perturbed mesh based on a generic Delaunay mesh and on a structured mesh on $D = (0,1)^2$ along with the sets where the perturbed points are constrained to belong a.s. We notice that for $p > 1$ the magnitude of the perturbations clearly tends to vanish. Finally, we remark that similar admissible perturbations can be introduced in higher dimensions.
\end{example}

\begin{figure}
	\centering
	\begin{tabular}{ccc}
		\includegraphics[]{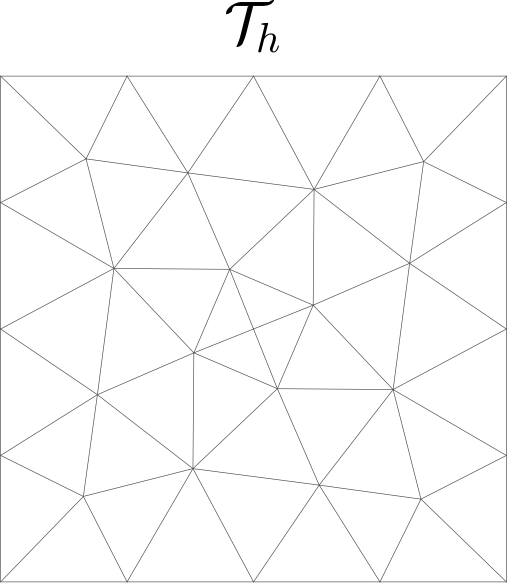} & \includegraphics[]{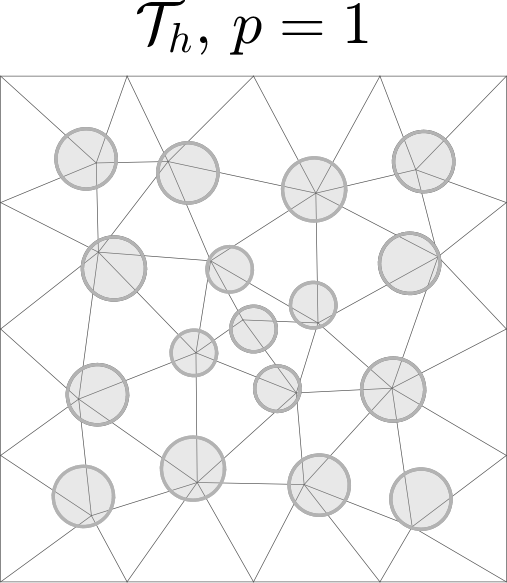} & \includegraphics[]{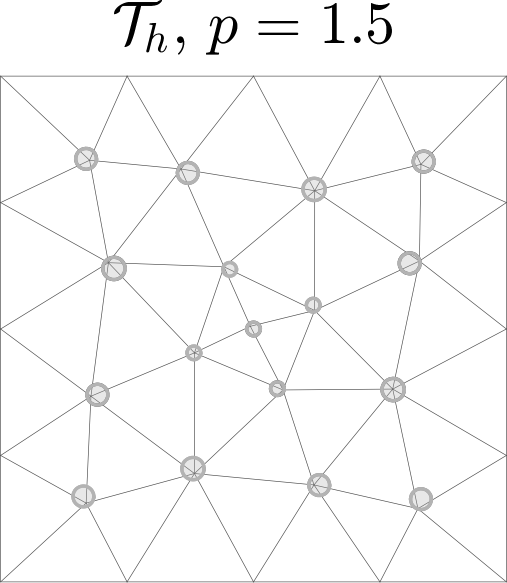} \\
		\includegraphics[]{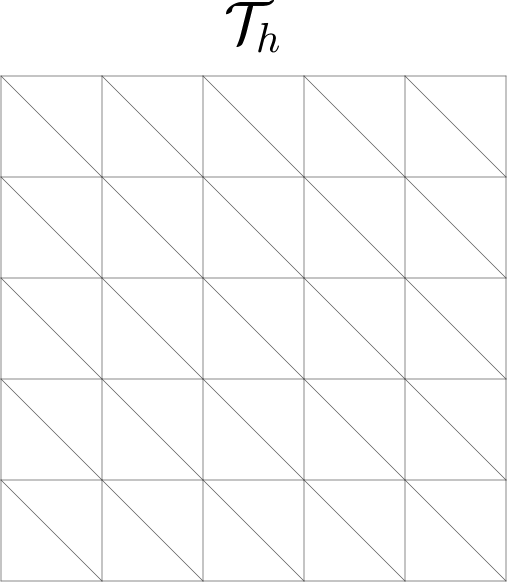} & \includegraphics[]{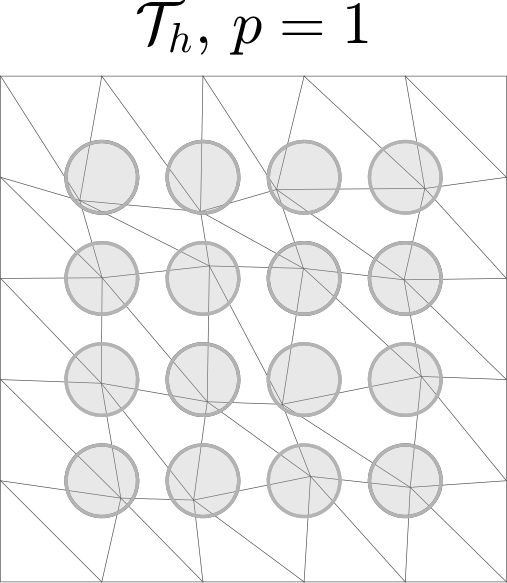} & \includegraphics[]{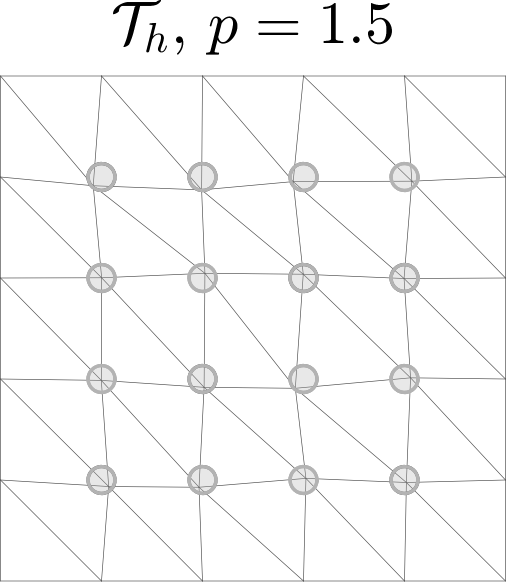}
	\end{tabular}
	\caption{A realization of $\widetilde{\mathcal T}_h$ for $p = \{1, 1.5\}$ based on two meshes $\mathcal T_h$ of $D = (0,1)^2$. On the first line, a Delaunay mesh. On the second line, a structured mesh. The regions where the perturbed points are included a.s. are depicted by light grey circles.}
	\label{fig:Mesh}
\end{figure}

Having defined the perturbed mesh, we now proceed with describing the RM-FEM. Let $\widetilde V_h$ be the space of continuous piecewise linear finite elements on $\widetilde {\mathcal T}_h$. Let moreover $\{\widetilde \phi_i\}_{i=1}^{N_I}$ be the Legendre basis functions defined on the internal vertices of $\widetilde {\mathcal T}_h$ and $\widetilde {\mathcal I} \colon \mathcal C^0(D) \cap V \to \widetilde V_h$ be the Lagrange interpolation operator onto $\widetilde V_h$, i.e., for a function $v \in \mathcal C^0(D) \cap V$ and for $x \in D$ we define
\begin{equation}
	\widetilde{\mathcal I}v(x) \defeq \sum_{i=1}^{N_I} v(x_i) \widetilde \phi_i(x). 
\end{equation}
We are then interested in the two functions belonging to the finite element space $\widetilde V_h$ whose definition we give below.
\begin{definition}\label{def:ProbInterp} Let $u_h \in V_h$ be defined in \eqref{eq:FESolution}. We define the RM-FEM interpolant as the random function $\widetilde{\mathcal I} u_h \in \widetilde V_h$, where $\widetilde{\mathcal I}$ is the Lagrange interpolant onto $\widetilde V_h$.
\end{definition}
\begin{definition}\label{def:ProbSol} Given the random finite element space $\widetilde V_h$, we define the RM-FEM solution as the unique random function $\widetilde u_h \in \widetilde V_h$ such that
\begin{equation}
	a(\widetilde u_h, \widetilde v_h) = F(\widetilde v_h),
\end{equation}
for all $\widetilde v_h \in \widetilde V_h$.	
\end{definition}

\begin{remark} Clearly, either for any fixed $p \geq 1$ and $h \to 0$ or for any fixed $h < 1$ and $p \to \infty$, the functions $u_h$, $\widetilde{\mathcal I} u_h$ and $\widetilde u_h$ tend to coincide. We visualize this for $u_h$ and $\widetilde u_h$ in \cref{fig:RMFEM}, where we simply fix $\kappa = 1$ and the right-hand side $f$ such that $u = \sin(2\pi x)$ in \eqref{eq:PDE}, and consider the effects of increasing $p$ and decreasing $h$. For this simple problem, we notice that for $p = 2$ and $N = 20$ the FEM solution $u_h$ and the RM-FEM solution $\widetilde u_h$ are almost indistinguishable.
\end{remark}

\begin{remark} All the quantities distinguished by a tilde (e.g., $\widetilde{\mathcal T}_h$, $\widetilde V_h$, $\widetilde{\mathcal I}$) are random variables with values in appropriate spaces. For example $\widetilde u_h$ is a random function $\widetilde u_h \colon \Omega \times D \to \R$, such that $\Omega \times D \ni (\omega, x) \mapsto \widetilde u_h(\omega, x)$. For economy of notation, in the following we drop the argument $\omega$ from all random variables.
\end{remark}

\begin{remark}\label{rem:pCoeff} The coefficient $p$ in \eqref{eq:PerturbedPoints} has the same role as the coefficient identified by the same symbol in both \cite{AbG20, CGS17}, i.e., it controls the variability of the probabilistic solutions by tuning the variability of the noise which is applied to the method. 
\end{remark}

\begin{remark}\label{rem:BoundaryPoints} Let us remark that the RM-FEM interpolant $\widetilde{\mathcal I} u_h$ is well-defined even allowing the vertices of $\mathcal T_h$ which lay on the boundary $\partial D$ to be perturbed, as far as the perturbation moves them inside the domain $D$. The random RM-FEM interpolant $\widetilde {\mathcal I} u_h$ does not in this case belong to the space $V$ in this case since it is not defined on the whole domain $D$ and does not satisfy boundary conditions. For practical applications, one can nevertheless employ the RM-FEM interpolant defined on a smaller domain, which results from a perturbation of all vertices of $\mathcal T_h$, including those on the boundaries.	
\end{remark}

\begin{figure}[t]
	\centering
	\begin{tabular}{ccc}
		\includegraphics[]{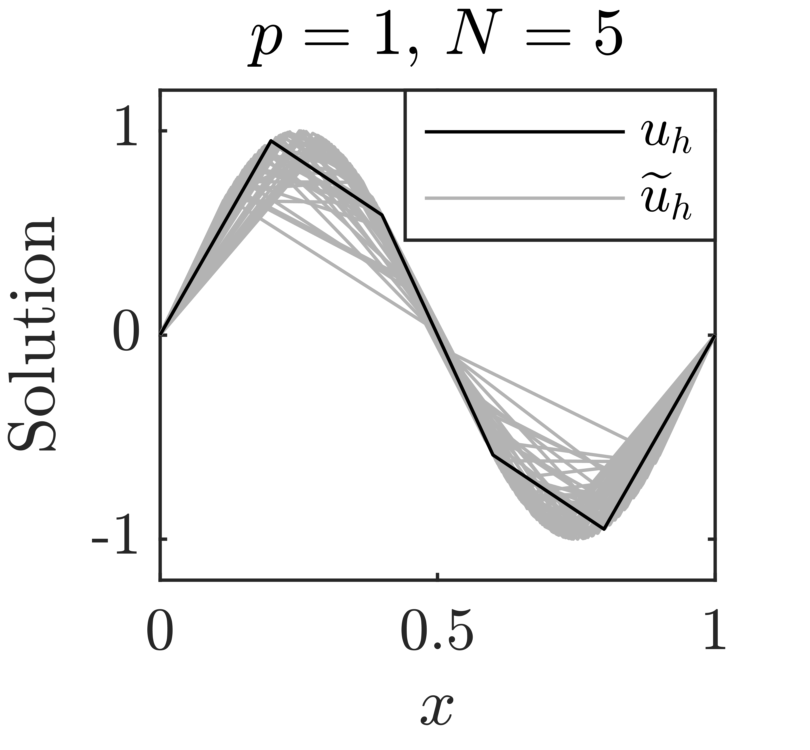} & \includegraphics[]{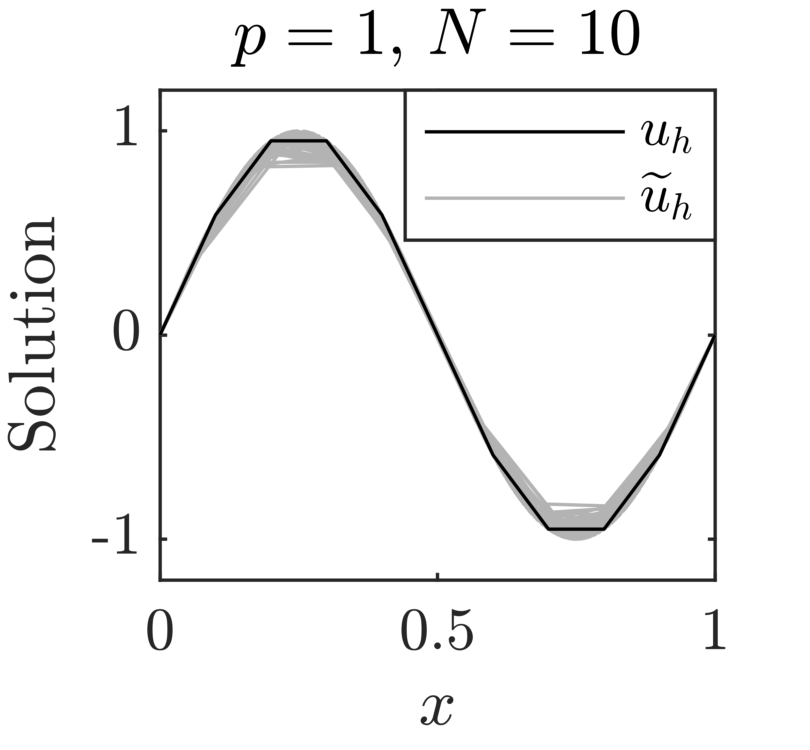} & \includegraphics[]{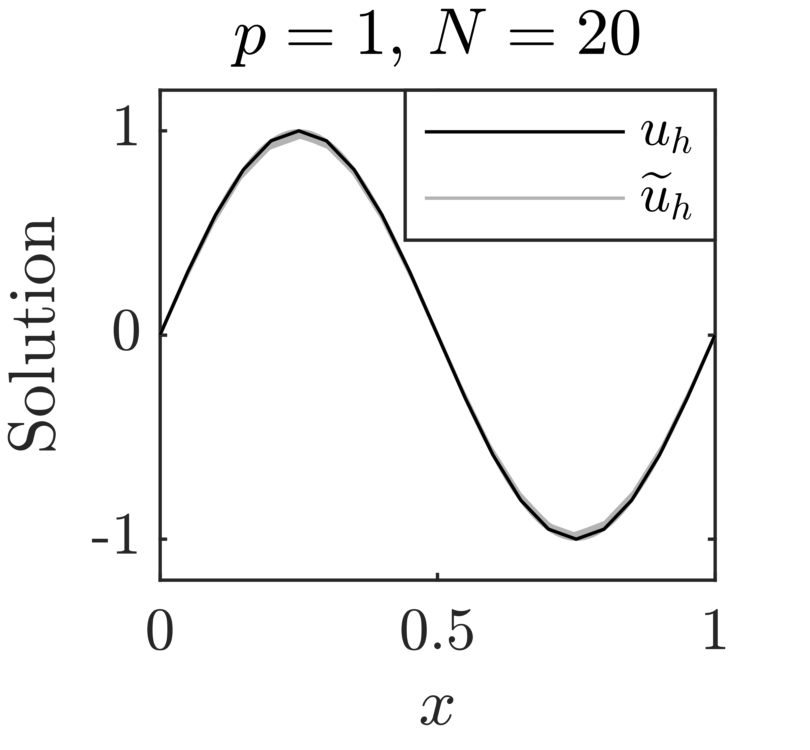} \vspace{0.1cm}\\ 
		\includegraphics[]{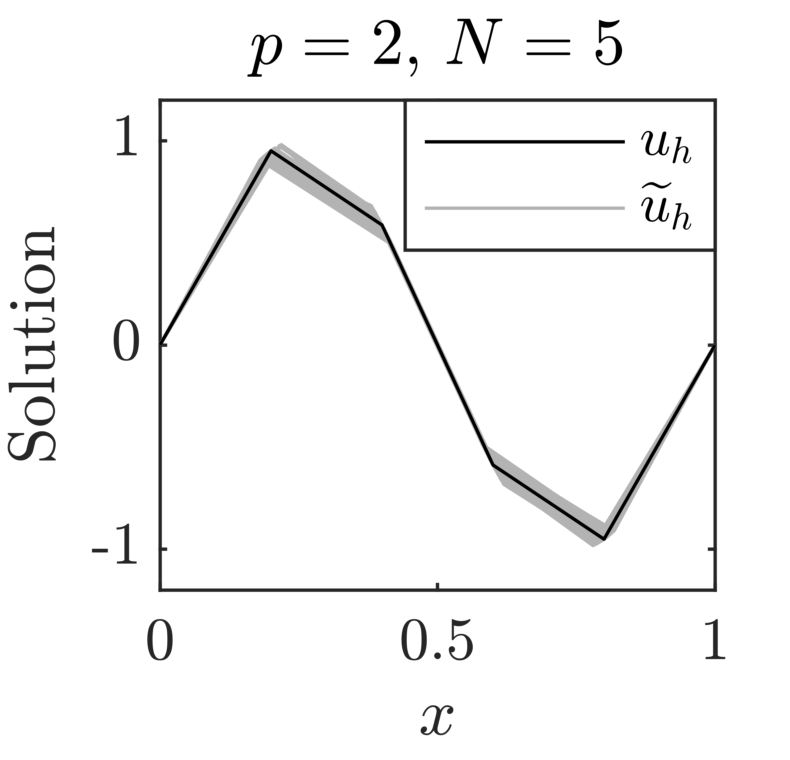} & \includegraphics[]{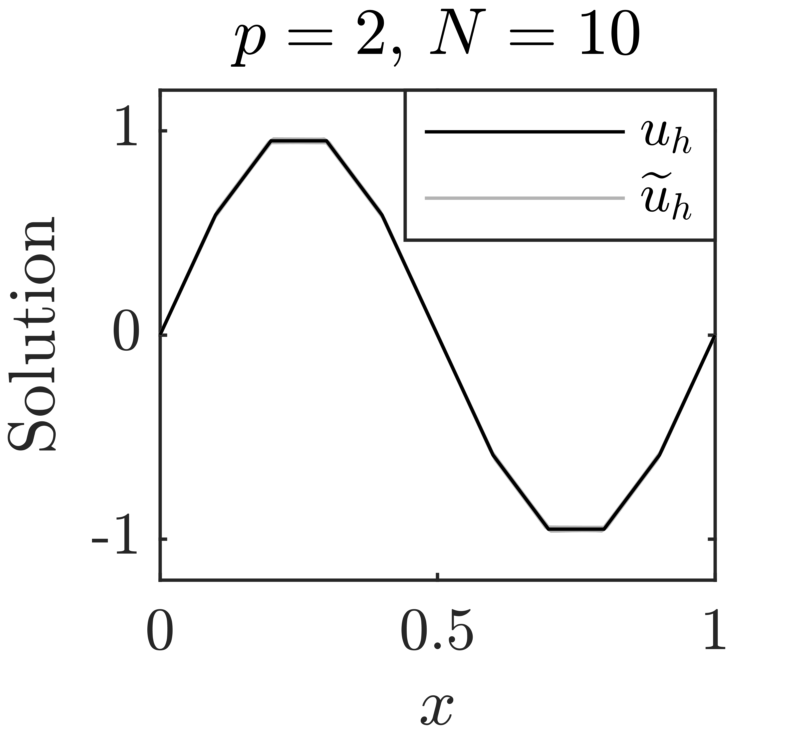} & \includegraphics[]{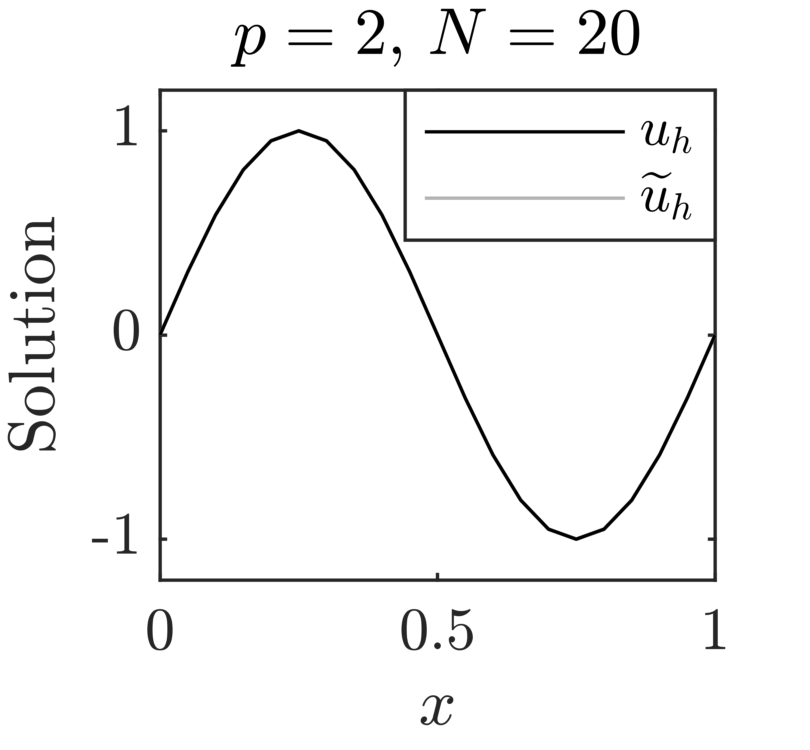}
	\end{tabular}
	\caption{Comparison between the RM-FEM and the FEM solutions. We display the solution $u_h$ and $50$ realizations of $\widetilde u_h$, by row respectively for $p = \{1, 2\}$ and by column for $N = \{5, 10, 20\}$. }
	\label{fig:RMFEM}
\end{figure} 

Before proceeding with the two main applications of the RM-FEM, i.e., a posteriori error estimators and Bayesian inverse problems, we state an a priori error estimate, which suggests how to balance the sources of error due to numerical discretization and to the randomization of the method, respectively.

\begin{theorem}\label{thm:APriori} Let the solution $u$ of \eqref{eq:PDE} be $u\in H^2(D)$. Then, it holds
	\begin{equation}
		\norm{\widetilde u_h - u}_V \leq Ch, \quad \text{a.s.},
	\end{equation}
	for a constant $C > 0$ independent of $h$. Moreover, if $p = 1$ in \eqref{eq:PerturbedPoints} the numerical and discretization errors are balanced with respect to $h$, i.e., it holds
	\begin{equation}
		\norm{u_h - \widetilde u_h}_V = \mathcal O(h) = \mathcal O(\norm{u - u_h}_V), \quad \text{a.s.}
	\end{equation}
\end{theorem}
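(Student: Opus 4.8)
The plan is to prove the a.s.\ $O(h)$ bound by a Céa-type argument applied on the random mesh, then establish the balance statement by combining it with standard interpolation estimates on both the deterministic and the perturbed meshes. First I would note that $\widetilde u_h$ is the Galerkin projection of $u$ onto $\widetilde V_h$ with respect to $a(\cdot,\cdot)$, so by Céa's lemma $\norm{\widetilde u_h - u}_V \le (M/m)\inf_{v_h \in \widetilde V_h}\norm{v_h - u}_V \le (M/m)\norm{\widetilde{\mathcal I} u - u}_V$, where here $\widetilde{\mathcal I}u$ denotes the genuine Lagrange interpolant of $u$ on the perturbed mesh (legitimate since $u \in H^2(D) \hookrightarrow C^0(\overline D)$). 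The key point is then that the perturbed mesh $\widetilde{\mathcal T}_h$ is, a.s., a conforming simplicial mesh whose element diameters are all $\le h + 2h^p \max_i r_i \le Ch$ and whose shape-regularity constant is bounded uniformly in $\omega$: this follows from \cref{as:meshPerturbation}, since moving each interior vertex by at most $h^p r_i$ (with $p\ge1$ and the $r_i$ chosen small enough to preserve conformity) perturbs the vertices of each simplex by a quantity that is small relative to its diameter, hence the minimal angles/inradii stay bounded below by a deterministic constant. With uniform shape-regularity in hand, the standard piecewise-linear interpolation estimate gives $\norm{\widetilde{\mathcal I}u - u}_V \le C h \norm{u}_{H^2(D)}$ with $C$ independent of $\omega$ and $h$, which yields the first claim.

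For the balance statement with $p=1$, I would write $\norm{u_h - \widetilde u_h}_V \le \norm{u_h - u}_V + \norm{u - \widetilde u_h}_V$; the second term is $O(h)$ by the bound just proved, and the first is the classical FEM estimate $\norm{u - u_h}_V \le Ch\norm{u}_{H^2(D)}$, so the upper bound $\norm{u_h - \widetilde u_h}_V = O(h)$ is immediate. The content of "balanced" is the matching lower bound, i.e.\ that $\norm{u_h - \widetilde u_h}_V$ is not of higher order than $h$. Here I would argue that, generically, $\widetilde u_h \ne u_h$ by an amount comparable to $h$: the simplest route is to produce the reverse inequality via $\norm{u - u_h}_V \le \norm{u - \widetilde u_h}_V + \norm{\widetilde u_h - u_h}_V$, so that $\norm{\widetilde u_h - u_h}_V \ge \norm{u - u_h}_V - \norm{u-\widetilde u_h}_V$; however this is only useful if one already knows $\norm{u - \widetilde u_h}_V$ is strictly smaller order than $\norm{u - u_h}_V$, which is false in general. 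A cleaner statement of "balanced" is simply that both quantities are $O(h)$ with $p=1$ and that this is the largest $p$ for which the perturbation term $\norm{u_h - \widetilde u_h}_V$ does not become $o(\norm{u-u_h}_V)$; I would make this precise by showing $\norm{\widetilde{\mathcal I}u_h - u_h}_V = O(h^p)$ (the interpolant moves each nodal value onto a mesh displaced by $h^p\alpha_i$, so a Taylor expansion of the piecewise-linear $u_h$ contributes $O(h^p)$ per element in the $H^1$ seminorm after the $h^{-1}$ scaling), hence $\norm{\widetilde{\mathcal I}u_h - u_h}_V = O(h)$ exactly when $p=1$, and then bound $\norm{u_h - \widetilde u_h}_V$ above and below by this quantity up to the $O(h^2)$-or-smaller Galerkin discrepancies.

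The main obstacle is the uniform shape-regularity of the perturbed mesh, which underlies every interpolation estimate used: one must check carefully that \cref{as:meshPerturbation}, together with $p\ge1$ and the smallness of the $r_i$, forces a deterministic lower bound on the inradii (equivalently an upper bound on $h_{\widetilde K}/\rho_{\widetilde K}$) valid for all $\omega$ in the a.s.\ conforming event — otherwise the constant $C$ in Céa's estimate would depend on $\omega$ and the "a.s." uniform bound would fail. A secondary delicate point is making the word "balanced" into a precise two-sided statement; I would be explicit that the content is $\norm{u_h - \widetilde u_h}_V \asymp h^p \asymp \norm{u-u_h}_V$ precisely at $p=1$, with the upper bounds rigorous and the sharpness of the exponent $p$ coming from the leading-order Taylor term of the mesh perturbation not vanishing for generic $u$.
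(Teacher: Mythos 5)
Your argument is essentially the paper's own: the authors simply invoke the standard a priori estimate \eqref{eq:FEMAPriori} as holding ``independently of the mesh'' to obtain $\norm{u-\widetilde u_h}_V \leq \widetilde C h \abs{u}_{H^2(D)}$ a.s., and then conclude the balance statement via exactly your triangle inequality $\norm{u_h - \widetilde u_h}_V \leq \norm{u-u_h}_V + \norm{u-\widetilde u_h}_V$. You are in fact more careful than the paper on the two points you flag: the uniform-in-$\omega$ shape regularity of $\widetilde{\mathcal T}_h$ (needed so that the C\'ea/interpolation constant is deterministic) is tacitly assumed rather than verified in the paper's proof, and the ``balanced'' claim is likewise established there only as the one-sided $\mathcal O(h)$ upper bound, with no matching lower bound on $\norm{u_h-\widetilde u_h}_V$.
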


This results indicates that one should fix $p = 1$ in \eqref{eq:PerturbedPoints} in order to obtain a family of probabilistic solutions whose statistical properties should reflect the true error. This is crucial when the RM-FEM is employed in a pipeline of computations such as Bayesian inverse problems, which will be presented in detail in \cref{sec:BIP}. The proof of \cref{thm:APriori} is elementary and discussed in \cref{sec:APriori}.

\section{A Posteriori Error Estimators based on the RM-FEM}\label{sec:APosteriori}

The first and foremost application of the RM-FEM is deriving a posteriori error estimators which are entirely based on the statistical information carried on by the mesh perturbation. We say that a quantity $\mathcal E_h$ is an a posteriori error estimator if it gives an error estimate on the numerical approximation and is computable only by knowledge of the numerical solution. Moreover, if there exist constants $C_{\mathrm{up}}$ and $C_{\mathrm{low}}$ independent of $h$ and of $u$ such that 
\begin{equation}\label{eq:APosterioriBound}
	C_{\mathrm{low}}\mathcal E_h \leq \norm{u-u_h}_V \leq  C_{\mathrm{up}} \mathcal E_h,
\end{equation}
we say that the a posteriori error estimator is reliable and efficient, respectively. Indeed, the upper bound above guarantees that when the estimator is small, so is the numerical error. The lower bound, instead, gives an insurance on the quality of the estimator, as it shows that the estimation of the error is not exceedingly pessimistic. There exist in the literature a huge number of a posteriori error estimators, and we refer the reader to the surveys given e.g. in \cite{Ver13, AiO00}. Most a posteriori error estimators are expressed in the form
\begin{equation}\label{eq:GenericAPosteriori}
	\mathcal E_h = \left(\sum_{K \in \mathcal T_h} \eta_K^2\right)^{1/2},
\end{equation} 
where the $\eta_K$ are local quantities depending on the solution and the data on the element $K$ and its neighbors. For example, in the two-dimensional case a valid a posteriori error estimator is given by the expression of its local components
\begin{equation}\label{eq:APosteriori2D}
	\eta_K^2 = h_K^2 \norm{f}^2_{L^2(K)} + h_K \norm{\dbrack{\nabla u_h \cdot \nu_K}}_{L^2(\partial K)}^2,
\end{equation}
where $\dbrack{\cdot}$ is the jump operator and $\nu_K$ denotes the unitary vector normal to the boundary of $K$ (see e.g. \cite[Section 3]{Ver94} or \cite[Chapter 2]{AiO00}). Other a posteriori error estimators are based on recovered gradients, which are employed as surrogates of the gradient of the exact solution to estimate the error. A notable member of these methodologies is the Zienkiewicz--Zhu (ZZ) patch recovery technique \cite{ZiZ92b, ZiZ92}, which is proved to be superconvergent on special meshes, and which is in practice widely employed on any mesh. 

It has been heuristically noted for ODEs in \cite{BHT20, ChC19, SSH19} that information on the variability of a probabilistic solution can be employed to estimate the error and thus adapt the numerical discretization. Indeed, building probabilistic solution to otherwise deterministic problems should pursue the goal of quantifying numerical errors through uncertainty. Guided by this observation, we now introduce two probabilistic error estimators for elliptic PDEs.

\begin{definition}\label{def:ProbErrEst} Let $\widetilde{\mathcal I} u_h$ be the RM-FEM interpolant defined in \cref{def:ProbInterp} and for each $K \in \mathcal T_h$, let us denote by $\widetilde K \in \widetilde{\mathcal T}_h$ its corresponding element in $\widetilde {\mathcal T}_h$. We define the first RM-FEM a posteriori error estimator as
	\begin{equation}
		\widetilde{\mathcal E}_{h,1} \defeq \left(\sum_{K \in \mathcal T_h} \widetilde{\eta}_{K,1}^2\right)^{1/2}, \quad \text{ with } \quad \widetilde{\eta}_{K,1}^2 = h_K^{-(p-1)} \Eb{\norm{\nabla (u_h - \widetilde{\mathcal I} u_h)}_{L^2(\widetilde K)}^2}.
	\end{equation}
	Moreover, we define the second RM-FEM a posteriori error estimator as
	\begin{equation}
		\widetilde{\mathcal E}_{h,2} \defeq \left(\sum_{K \in \mathcal T_h} \widetilde{\eta}_{K,2}^2\right)^{1/2}, \quad \text{ with } \quad \widetilde{\eta}_{K,2}^2 = h_K^{-(2p-2)} \abs{K} \Eb{\norm{\nabla u_h\eval{K} - \nabla \widetilde{\mathcal I} u_h\eval{\widetilde K}}^2}.
	\end{equation}
\end{definition}

\begin{remark} The scaling factors $h_K^{-(p-1)}$ and $h_K^{-(2p-2)}$ in the definition of $\widetilde{\mathcal \eta}_{K,1}$ and $\widetilde{\mathcal \eta}_{K,2}$ are necessary to obtain well-calibrated error estimators. This is made clearer in the one-dimensional case by the analysis presented in \cref{sec:ErrAnalysis}. For higher dimensions, they can be partially explained with the ansatz \eqref{eq:APrioriGenP}, especially for the first estimator $\widetilde{\mathcal E}_{h,1}$, and they appear in practice to be the correct scaling. 
\end{remark}

\begin{remark}\label{rem:SuperMesh} Computing the estimator $\widetilde {\mathcal E}_{h,1}$ is more involved than the estimator $\widetilde {\mathcal E}_{h,2}$. Indeed, for the latter it is sufficient to compute the interpolant $\widetilde{\mathcal I} u_h$ and the gradients over each element of $u_h$ and of the interpolant. For $\widetilde {\mathcal E}_{h,1}$, instead, one has to compute on each element $\widetilde K$ the quantity
	\begin{equation}
		\norm{\nabla(u_h - \widetilde{\mathcal I} u_h)}_{L^2(\widetilde K)}.
	\end{equation}
	By construction, each element $\widetilde K$ overlaps with the elements corresponding to its neighbors in the original mesh in a non-trivial manner, and if $d > 1$ one has to rely to the construction of a ``super-mesh'' (see e.g. \cite{CGR18,CrF20}) such that on each of its elements the quantity $\nabla(u_h - \widetilde{\mathcal I} u_h)$ is constant. A super-mesh has to be built for each realization of the perturbed mesh $\widetilde{\mathcal T}_h$, which could therefore be expensive.
\end{remark}

In this article, we show in the one-dimensional case that the estimators given in \cref{def:ProbErrEst} are reliable and efficient in the sense of \eqref{eq:APosterioriBound}. In the statement of our theoretical result, which is given below, we make use of a quantity $\Lambda \in \R$ which is of higher order in most practical scenarios and which is defined as
\begin{equation}\label{eq:BabuskaLambdaIntro}
	\Lambda^2 \defeq h^\zeta \sum_{j=1}^N \int_{K_j} (f(x) + C_j)^2 \dd x,
\end{equation}
where for each $K_j$ the real constant $C_j$ will be specified in the analysis of \cref{sec:ErrAnalysis} (see e.g. \cite[Equation (8.7)]{BaR81}). Moreover, we consider one-dimensional meshes which are $\lambda$-quasi-uniform, i.e., we assume there exists a constant $\lambda \in (1, \infty)$ such that it holds
\begin{equation}
	\frac1\lambda \leq \frac{h_j}{h_{j-1}} \leq \lambda, \quad j = 2, \ldots, N,
\end{equation}
uniformly in $h$. Finally, we consider perturbations satisfying
\begin{equation}
	\alpha_i =  \left(h^{-1}\bar h_i\right)^p \bar \alpha_i, \quad i = 1, \ldots, N-1,
\end{equation}
where $\bar h_i = \min\{h_i,h_{i+1}\}$ and for a i.i.d. sequence of random variables $\{\bar\alpha_i\}_{i=1}^{N-1}$ such that $\abs{\bar\alpha_1} \leq 1/2$ a.s. These perturbations are indeed the same as the ones presented in \cref{ex:ExRandomPerturbation}, but without the assumption of $\{\bar\alpha_i\}_{i=1}^N$ to be uniformly distributed, which is not necessary in the following. In practice, a uniform distribution is nevertheless advisable, as it is still general enough and satisfies the radial assumption of \cref{as:meshPerturbation}\ref{as:meshPerturbation_sym}. We moreover introduce the following technical assumption on the perturbation.
\begin{assumption}\label{as:AssumptionAPosteriori} Let the family of meshes $\mathcal T_h$ be $\lambda$-quasi-uniform, let $p$ be the coefficient introduced in \eqref{eq:PerturbedPoints}  and assume that for all $h$ and $p$ there exists $C > 0$ such that
	\begin{equation}
		4h^{p-1}\frac{\E\abs{\bar \alpha_1}^2}{\E\abs{\bar \alpha_1}} + C < 1+\lambda^{-(p-1)}.
	\end{equation}
\end{assumption}
\begin{remark}\label{rem:AssumptionAPosteriori} We note that \cref{as:AssumptionAPosteriori} holds for $p > 1$ and $h$ sufficiently small, and is therefore not restrictive in practice.
\end{remark}

We can now state the main result involving a posteriori error estimators.

\begin{theorem}\label{thm:MainThmAPosteriori} Let the dimension $d = 1$, let $p > 1$ in \eqref{eq:PerturbedPoints} and let \cref{as:meshPerturbation} hold. Moreover, let $\widetilde{\mathcal E}_{h,1}$, $\widetilde{\mathcal E}_{h,2}$ and $\Lambda$ be given in \cref{def:ProbErrEst} and \eqref{eq:BabuskaLambdaIntro} respectively and let the family of meshes $\mathcal T_h$ be $\lambda$-quasi-uniform. Then, there exists $C > 0$ independent of $h$ and of the solution $u$ such that it holds for $k \in \{1,2\}$
	\begin{equation}
		\norm{u-u_h}_V \leq \widetilde C (\widetilde{\mathcal E}_{h,k}^2 + \Lambda^2)^{1/2},
	\end{equation}
	up to higher order terms in $h$ and under \cref{as:AssumptionAPosteriori} for $k=1$. If additionally $\kappa \in \mathcal C^2(D)$ and $f \in \mathcal C^1(D)$, then there exist constants $\widetilde C_{\mathrm{low}}$ and $\widetilde C_{\mathrm{up}}$ independent of $h$ and of the solution $u$ such that for $k \in \{1,2\}$ it holds
	\begin{equation}
		\widetilde C_{\mathrm{low}} \widetilde{\mathcal E}_{h,k} \leq \norm{u-u_h}_V \leq \widetilde C_{\mathrm{up}} \widetilde{\mathcal E}_{h,k},
	\end{equation}
	up to higher order terms in $h$ and under \cref{as:AssumptionAPosteriori} for $k=1$.
\end{theorem}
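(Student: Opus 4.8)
The plan is to work entirely in dimension one and to reduce both estimators to the classical jump estimator of Babu\v{s}ka--Rheinboldt \cite{BaR81}, and then to invoke (or reprove) the equivalence of the latter with the energy-norm error. Throughout write $K_j = (x_{j-1},x_j)$, $h_j = x_j - x_{j-1}$, set $s_j \eqdef \nabla u_h|_{K_j}$ and $\delta_k \eqdef \dbrack{\nabla u_h}(x_k) = s_{k+1}-s_k$, and recall that the admissible perturbation of \cref{thm:MainThmAPosteriori} displaces the internal nodes by $\xi_i \eqdef h^p\alpha_i = \bar h_i^p\bar\alpha_i$ with $\abs{\bar\alpha_i}\le 1/2$ a.s., the $\bar\alpha_i$ i.i.d.\ and, by \cref{as:meshPerturbation}, symmetric, so that $\widetilde h_j \eqdef \abs{\widetilde K_j} = h_j + \xi_j - \xi_{j-1}$ and $\xi_0 = \xi_N = 0$; denote by $(\cdot)_+$, $(\cdot)_-$ the positive and negative parts.

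First I would compute the two estimators in closed form for a fixed realization. On $\widetilde K_j$ the interpolant $\widetilde{\mathcal I} u_h$ is affine; since $\widetilde x_i$ lands in $K_i$ or $K_{i+1}$ according to $\operatorname{sign}(\xi_i)$ and $u_h$ is piecewise affine, a direct computation gives $\nabla u_h|_{K_j} - \nabla\widetilde{\mathcal I} u_h|_{\widetilde K_j} = \widetilde h_j^{-1}\bigl((\xi_{j-1})_-\delta_{j-1} - (\xi_j)_+\delta_j\bigr)$, while $u_h$ restricted to $\widetilde K_j$ has slope $s_{j-1}$ on a subinterval of length $(\xi_{j-1})_-$, slope $s_{j+1}$ on one of length $(\xi_j)_+$, and slope $s_j$ on the rest; assembling the pieces produces the identity $\norm{\nabla(u_h - \widetilde{\mathcal I} u_h)}_{L^2(\widetilde K_j)}^2 = (\xi_{j-1})_-\delta_{j-1}^2 + (\xi_j)_+\delta_j^2 - \widetilde h_j\bigl(\nabla u_h|_{K_j} - \nabla\widetilde{\mathcal I} u_h|_{\widetilde K_j}\bigr)^2 \ge 0$. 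Thus $\widetilde\eta_{K_j,1}^2$ and $\widetilde\eta_{K_j,2}^2$ become explicit polynomials in $\{\xi_i\}$, $\{\delta_k\}$, $h_j$ and $\widetilde h_j$.

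Next I would take expectations using $\E[(\xi_i)_\pm] = \tfrac12\bar h_i^p\E\abs{\bar\alpha_1}$, $\E[(\xi_i)_\pm^2] = \tfrac12\bar h_i^{2p}\E\abs{\bar\alpha_1}^2$, independence, symmetry and the expansion $\widetilde h_j^{-1} = h_j^{-1}(1 + \OO(h_j^{p-1}))$, valid for $p>1$ under $\lambda$-quasi-uniformity. For $k=1$ this gives $\widetilde\eta_{K_j,1}^2 = \tfrac12\E\abs{\bar\alpha_1}\,h_j^{-(p-1)}\bigl(\bar h_{j-1}^p\delta_{j-1}^2 + \bar h_j^p\delta_j^2\bigr) + R_j$, where $R_j$ is a sum of correction terms (one of them negative, coming from the subtracted square) bounded in modulus by $C h_j^{p-1}\tfrac{\E\abs{\bar\alpha_1}^2}{\E\abs{\bar\alpha_1}}$ times the leading term; for $k=2$ one gets $\widetilde\eta_{K_j,2}^2 = \tfrac12 h_j^{-(2p-1)}\bigl(\E\abs{\bar\alpha_1}^2(\bar h_{j-1}^{2p}\delta_{j-1}^2 + \bar h_j^{2p}\delta_j^2) - (\E\abs{\bar\alpha_1})^2\bar h_{j-1}^p\bar h_j^p\delta_{j-1}\delta_j\bigr)(1 + \OO(h_j^{p-1}))$. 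Summing over $j$ and reindexing by the internal nodes, $\lambda$-quasi-uniformity yields $\bar h_k^p(h_k^{-(p-1)} + h_{k+1}^{-(p-1)}) \in \bigl[(1+\lambda^{-(p-1)})\bar h_k,\,2\bar h_k\bigr]$, so the leading contribution of $\widetilde{\mathcal E}_{h,k}^2$ equals, up to constants depending only on $\lambda$, $p$ and the first two moments of $\bar\alpha_1$, the quantity $\sum_k \bar h_k \dbrack{\nabla u_h}(x_k)^2$ --- the one-dimensional Babu\v{s}ka--Rheinboldt jump estimator. \cref{as:AssumptionAPosteriori} is used precisely here, and only for $k=1$: the condition $4h^{p-1}\tfrac{\E\abs{\bar\alpha_1}^2}{\E\abs{\bar\alpha_1}} + C < 1 + \lambda^{-(p-1)}$ guarantees that the remainder $\sum_j h_j^{-(p-1)}R_j$ cannot absorb the lower constant $1+\lambda^{-(p-1)}$ of the leading sum; for $k=2$ the leading expression is already, node by node, comparable to $\sum_k \bar h_k \dbrack{\nabla u_h}(x_k)^2$, because the negative cross term (carrying the coefficient $(\E\abs{\bar\alpha_1})^2 < \E\abs{\bar\alpha_1}^2$) cannot cancel the diagonal ones, by the arithmetic--geometric mean inequality and strict Jensen, so no assumption is needed.

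Finally I would invoke the classical result of \cite{BaR81}: the jump estimator $\mathcal E_{\mathrm{BR}}^2 \eqdef \sum_k \bar h_k \dbrack{\nabla u_h}(x_k)^2$ always satisfies $\norm{u-u_h}_V^2 \le \widetilde C(\mathcal E_{\mathrm{BR}}^2 + \Lambda^2)$, and $\widetilde C_{\mathrm{low}}^2\mathcal E_{\mathrm{BR}}^2 \le \norm{u-u_h}_V^2 \le \widetilde C_{\mathrm{up}}^2\mathcal E_{\mathrm{BR}}^2$ once $\kappa\in\mathcal C^2(D)$ and $f\in\mathcal C^1(D)$, because then the data-oscillation term $\Lambda^2$ (with $C_j$ as in \cite{BaR81}) is genuinely of higher order and may be dropped. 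If one wants a self-contained proof of this step, it is the standard residual estimate in one dimension: on each $K_j$ the error solves a two-point problem whose data part is controlled by $\Lambda$ and whose $\kappa$-harmonic part carries the nodal jumps, using that $\kappa\nabla u_h$ is piecewise constant so $\dbrack{\kappa\nabla u_h}(x_k) = \kappa(x_k)\dbrack{\nabla u_h}(x_k)$ and that $\int_{K_j}\kappa^{-1} \asymp h_j$. Chaining this with $\widetilde{\mathcal E}_{h,k}^2 \asymp \mathcal E_{\mathrm{BR}}^2$ (up to higher order terms, under \cref{as:AssumptionAPosteriori} for $k=1$) yields both asserted inequalities. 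I expect the main obstacle to be the third step: the bookkeeping showing the remainders $R_j$ are higher order and, for $k=1$, fit inside the margin left by $1+\lambda^{-(p-1)}$ --- it is exactly this estimate that dictates the form of \cref{as:AssumptionAPosteriori}.
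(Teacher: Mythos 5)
Your proposal is correct and follows essentially the same route as the paper: reduce both $\widetilde{\mathcal E}_{h,1}$ and $\widetilde{\mathcal E}_{h,2}$ to the weighted jump functional $\sum_k \bar h_k \dbrack{u_h'}_{x_k}^2$, use \cref{as:AssumptionAPosteriori} to keep the negative remainder from absorbing the lower-bound constant for $k=1$, and then chain with the Babu\v{s}ka--Rheinboldt equivalence (the paper does this via \cref{lem:EquivProb1Jump,lem:EquivProb2Jump,lem:EquivDetJump} and \cref{thm:BabuskaThm}). The only difference is organizational: your single Pythagorean-type identity $\norm{\nabla(u_h - \widetilde{\mathcal I} u_h)}_{L^2(\widetilde K_j)}^2 = (\xi_{j-1})_-\delta_{j-1}^2 + (\xi_j)_+\delta_j^2 - \widetilde h_j\,e_j^2$ packages in one line what the paper obtains by conditioning on the four sign events $A_{i-1,i}^{(\pm,\pm)}$, and the two computations agree term by term (your first two summands are the paper's $J_1$, the subtracted square is $J_2$).
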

Let us notice that the estimators given in \cref{def:ProbErrEst} involve the computation of an expectation with respect to the random perturbations of the mesh, and therefore a Monte Carlo simulation is needed in practice. Let $N_{\mathrm{MC}}$ be a positive integer, $k \in \{1,2\}$ and $\{\widetilde E_{h,k}^{(i)}\}_{i=1}^{M}$ be i.i.d. realizations of the estimator $\widetilde{\mathcal E}_{h,k}$, obtained with independent perturbations of the mesh. Then, in practice we compute
\begin{equation}\label{eq:ErrEstMC}
	\widetilde E_{h,k} \defeq \frac1{N_{\mathrm{MC}}} \sum_{i=1}^{N_{\mathrm{MC}}} \widetilde E_{h,k}^{(i)}.
\end{equation}

\begin{remark}\label{rem:CompCost} It could be suggested that the application of Monte Carlo techniques increases dramatically the simulation time. We argue that in practice the computational overhead is not relevant, mainly for three reasons. First, it has been proved in \cite{AbG20} that the variance of Monte Carlo estimators drawn from probabilistic numerical methods decreases with respect to the discretization size $h$. Hence, the number of simulations $M$ does not need to be large, nor increasing if $h \to 0$, to guarantee a good quality of the estimator. The same arguments hold for the RM-FEM, too. Second, the Monte Carlo estimation is completely parallelizable, thus reducing the cost by a factor equal to the number of available computing units. Finally, the computation of the RM-FEM interpolant $\widetilde{\mathcal I}u_h$ is not computationally involved, neither when it is repeated $N_{\mathrm{MC}}$ times.
\end{remark}

\subsection{Numerical Experiments}\label{sec:NumExp_ErrEst}

We now present numerical experiments on one and two-dimensional test cases to demonstrate the validity of our a posteriori error estimators. In particular, we are interested in determining whether the probabilistic error estimators introduced in \cref{def:ProbErrEst} are indeed reliable estimators for the numerical error in the FEM, and in employing these estimators for local refinements of the mesh. Setting a tolerance $\gamma > 0$, our goal is building a mesh $\mathcal T_h$ such that
\begin{equation}\label{eq:APosterioriGoal}
	\frac{\norm{u-u_h}_V}{\norm{u_h}_V} \leq \gamma.
\end{equation}
Replacing the numerator with $\widetilde{\mathcal E}_{h,k}$, $k \in \{1,2\}$, we notice that the condition \eqref{eq:APosterioriGoal} is satisfied if it holds for all $K \in \mathcal T_h$
\begin{equation}\label{eq:APosterioriLocalCondition}
	\widetilde \eta_{K,k} \leq \frac{\gamma \norm{u_h}_V}{\widetilde C_{\mathrm{up}}\sqrt{N}} \eqdef \gamma_{\mathrm{loc}}.
\end{equation}
Indeed, in this case
\begin{equation}
	\norm{u-u_h}_V^2 \leq \widetilde C_{\mathrm{up}}^2 \widetilde {\mathcal E}_{h,k}^2 =  \widetilde C_{\mathrm{up}}^2 \sum_{K\in \mathcal T_h} \widetilde \eta_{K,k}^2 \leq \gamma^2 \norm{u_h}_V^2,
\end{equation}
and thus \eqref{eq:APosterioriGoal} holds. Let us remark that $\widetilde C_{\mathrm{up}}^2$ is not known a priori in practice, and therefore we just decide to employ the condition \eqref{eq:APosterioriLocalCondition} fixing $\widetilde C_{\mathrm{up}} = 1$ in our experiments. We therefore adapt the mesh by computing the local contributions and comparing them with $\gamma_{\mathrm{loc}}$, thus locally refining the mesh if the condition \eqref{eq:APosterioriLocalCondition} is not met, and coarsening if the local estimators are excessively small with respect to $\gamma_{\mathrm{loc}}$.

In the following we employ for both the one and the two-dimensional cases the uniform distributions given in \cref{ex:ExRandomPerturbation} for the random perturbations of the points. In light of \cref{lem:EquivProb1Jump} and \cref{lem:EquivProb2Jump}, we decide to correct the estimators by normalizing them with respect to the random perturbations. In particular, in the following, the estimators are normalized as $\widetilde{\mathcal E}_{h,1} \leftarrow \widetilde{\mathcal E}_{h,1} / \E\norm{\bar \alpha_1}$ and $\widetilde{\mathcal E}_{h,2} \leftarrow \widetilde{\mathcal E}_{h,2} / \E\norm{\bar \alpha_1}^2$.

\subsubsection{One-Dimensional Case}\label{sec:NumExp_1D}
\begin{figure}[t!]
	\centering
	\begin{tabular}{cc}
		\includegraphics[]{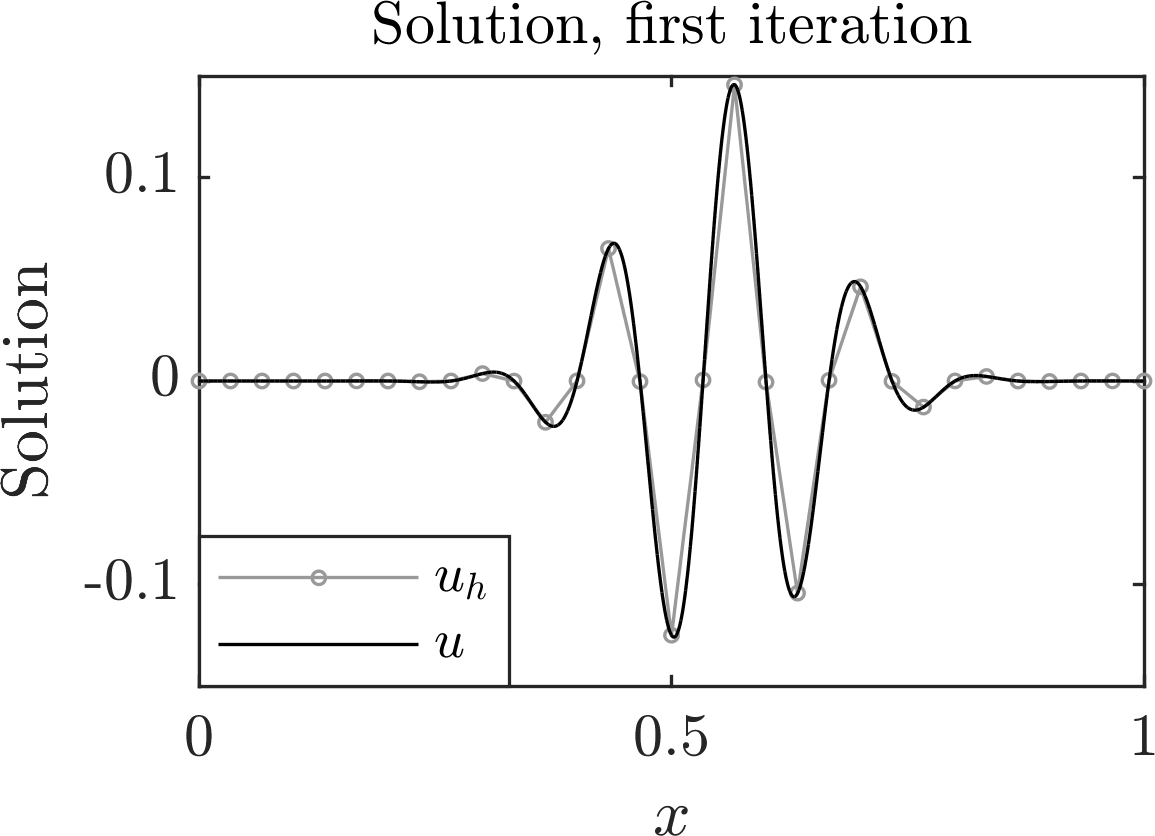} & \includegraphics[]{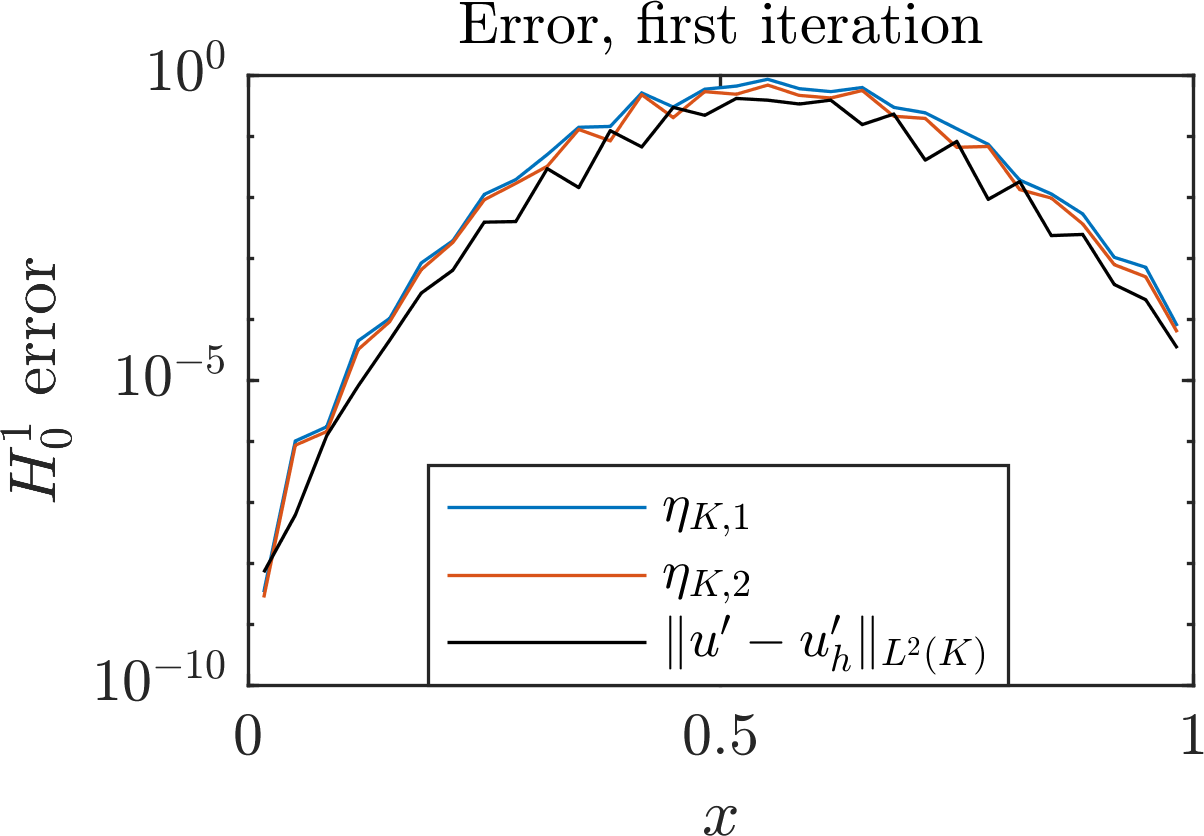} \vspace{0.1cm} \\
		\includegraphics[]{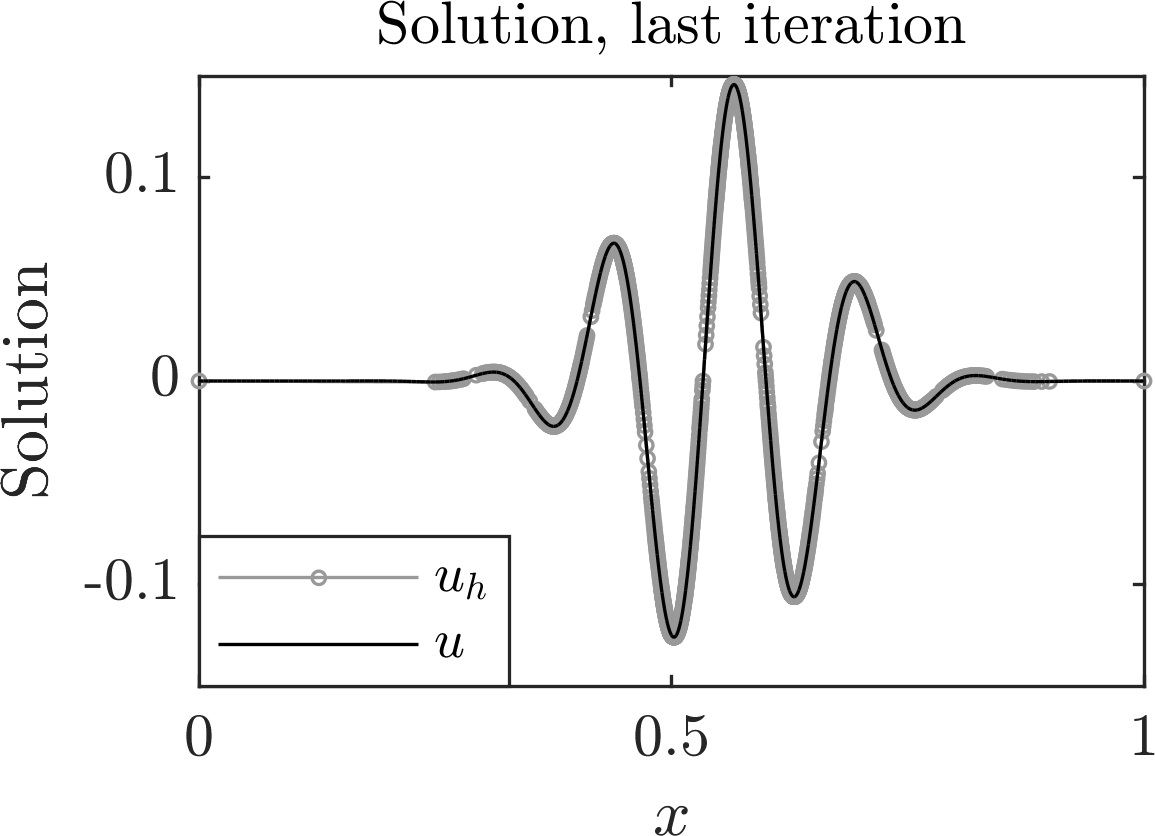} & \includegraphics[]{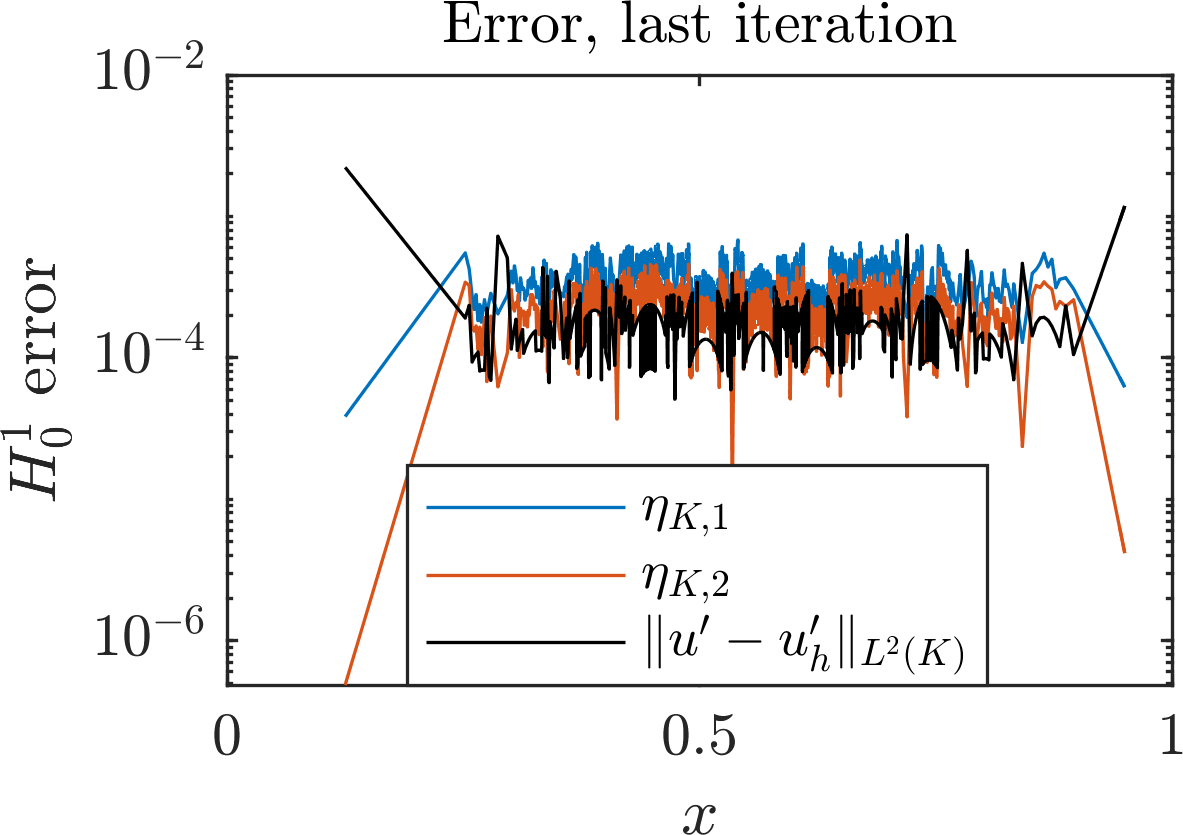} \vspace{0.1cm} \\
		\includegraphics[]{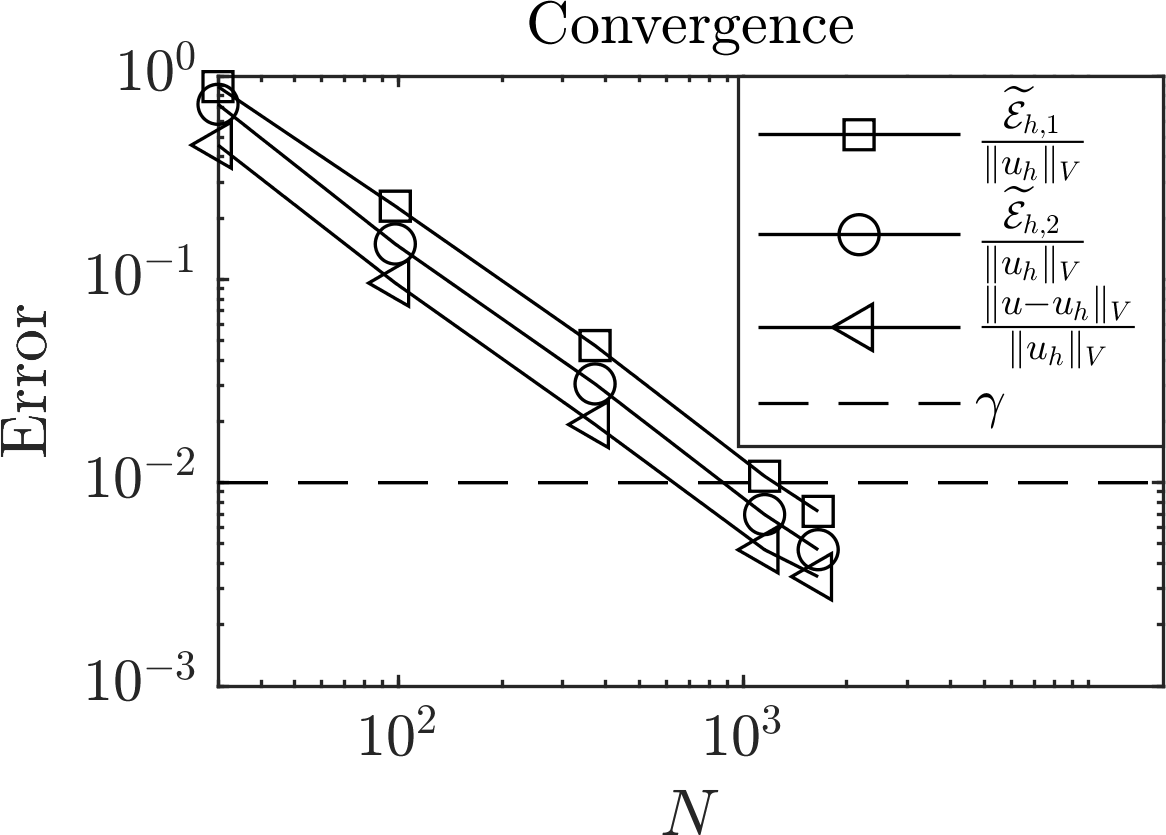} & \includegraphics[]{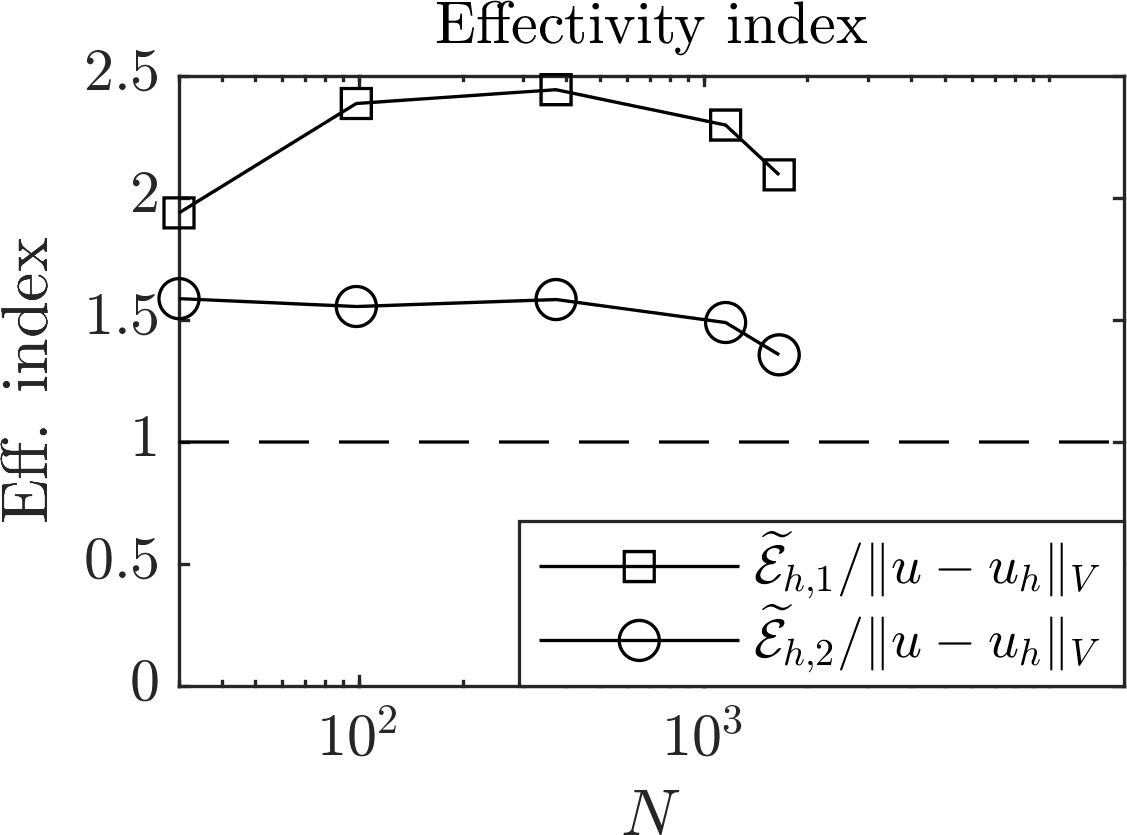} 
	\end{tabular}
	\caption{Results for the one-dimensional experiment of \cref{sec:NumExp_ErrEst}. First and second rows: numerical and exact solutions $u_h$ and $u$ on the left, local contributions to the error estimators of \cref{def:ProbErrEst} compared with the true error $\norm{u-u_h}_K$ on the right, at initialization and termination of the adaptivity procedure. Third row: on the left convergence of the global error $\norm{u-u_h}_V$ and of the estimator $\mathcal E_h$ until the tolerance $\gamma$, on the right the effectivity index.}
	\label{fig:ErrEst}
\end{figure}

We first consider $d = 1$ and the two-point boundary value problem \eqref{eq:PDE_1d} with $\kappa$ and the exact solution $u$ given by
\begin{equation}
	\kappa(x) = 1 + x^3, \quad u(x) = x^3 \sin(a\pi x) \exp(-b(x-0.5)^2) ,
\end{equation}
where we fix $a = 15$ and $b = 50$, and where we choose the right-hand side $f$ so that $u$ is indeed the solution. As a goal, we set the tolerance $\gamma = 10^{-2}$ in \eqref{eq:APosterioriGoal} and stop the algorithm when condition \eqref{eq:APosterioriLocalCondition} is met by all elements of the mesh. We consider the RM-FEM implemented with uniform random variables as in \cref{ex:ExRandomPerturbation} and fix $p = 3$ in \eqref{eq:PerturbedPoints}. Moreover, we consider $N_{\mathrm{MC}} = 20$ realizations of the probabilistic mesh to approximate the error estimator as in \eqref{eq:ErrEstMC}. We then compute both the error estimators given in \cref{def:ProbErrEst} and employ $\widetilde{\mathcal E}_{h,1}$ for adapting the mesh by refinement and coarsening, guided by the condition \eqref{eq:APosterioriLocalCondition}. The adaptivity algorithm is initialized with a mesh $\mathcal T_h$ built on $N = 30$ elements of equal size and proceeds by refinement and coarsening. Results, given in \cref{fig:ErrEst}, confirm the validity of our probabilistic error estimators. In particular, we remark that the local error estimators succeed in identifying the regions where the mesh has to be refined, thus getting a solution with an approximately equal distribution of the error over the domain. Both probabilistic estimators, moreover, succeed in bounding the global error until the tolerance is reached, with the estimator $\widetilde{\mathcal E}_{h,2}$ which appears to be more efficient than $\widetilde{\mathcal E}_{h,1}$.

\subsubsection{Two-Dimensional Case}\label{sec:NumExp_2D}

We now present two numerical experiments conducted in the two-dimensional case. In particular, for both experiments we only focus on the computation of $\widetilde{\mathcal E}_{h,2}$ in \cref{def:ProbErrEst}, since in view of \cref{rem:SuperMesh} this second estimator is computationally easier to implement than $\widetilde {\mathcal E}_{h,1}$ for $d > 1$. To account for errors on the boundary elements, we decide for these experiments to perturb all points, including those on the boundaries, following \cref{rem:BoundaryPoints}. In order for $\widetilde{\mathcal I}u_h$, and thus $\widetilde{\mathcal E}_{h, 2}$ to be well-defined, we reflect the perturbed boundary points symmetrically to the boundary $\partial D$ in case they are outside the domain. For both experiments, we implement the RM-FEM with a uniform distribution for the random perturbations, as described in \cref{ex:ExRandomPerturbation}. Moreover, we fix $p = 3$ and compute the Monte Carlo approximation \eqref{eq:ErrEstMC} on $N_{\mathrm{MC}} = 500$ realizations of the random mesh. For the adaptivity algorithm, we start from a coarse mesh and apply regular local refinements if the condition \eqref{eq:APosterioriLocalCondition} is not met by the local error estimator $\widetilde \eta_{K,2}$. In the two-dimensional case we do not apply coarsening to the mesh.

\begin{figure}[t]
	\centering
	\begin{tabular}{cc}
	\includegraphics[]{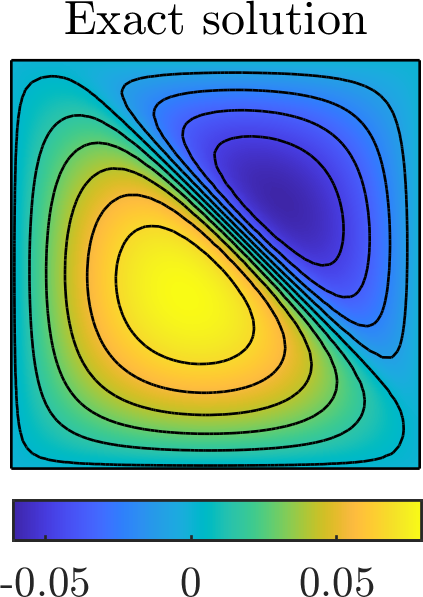} & \includegraphics[]{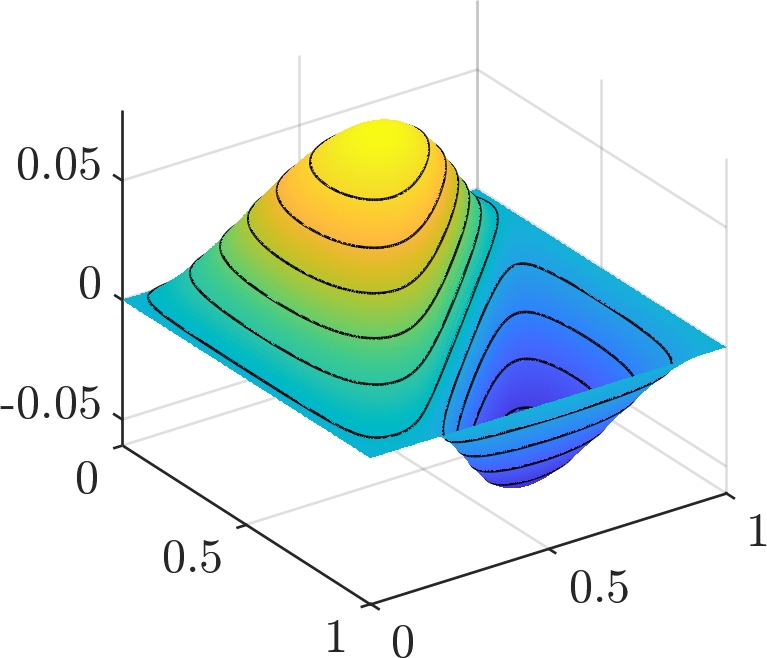}
	\end{tabular}
	\caption{Exact solution $u_1$ for the experiment of \cref{sec:NumExp_2D}. Both the contour and the three-dimensional view highlight the steep gradient that features $u_1$.}
	\label{fig:ExSol2D_ZZ}
\end{figure}
\begin{figure}[t]
	\centering
	\begin{tabular}{cc}
		\includegraphics[]{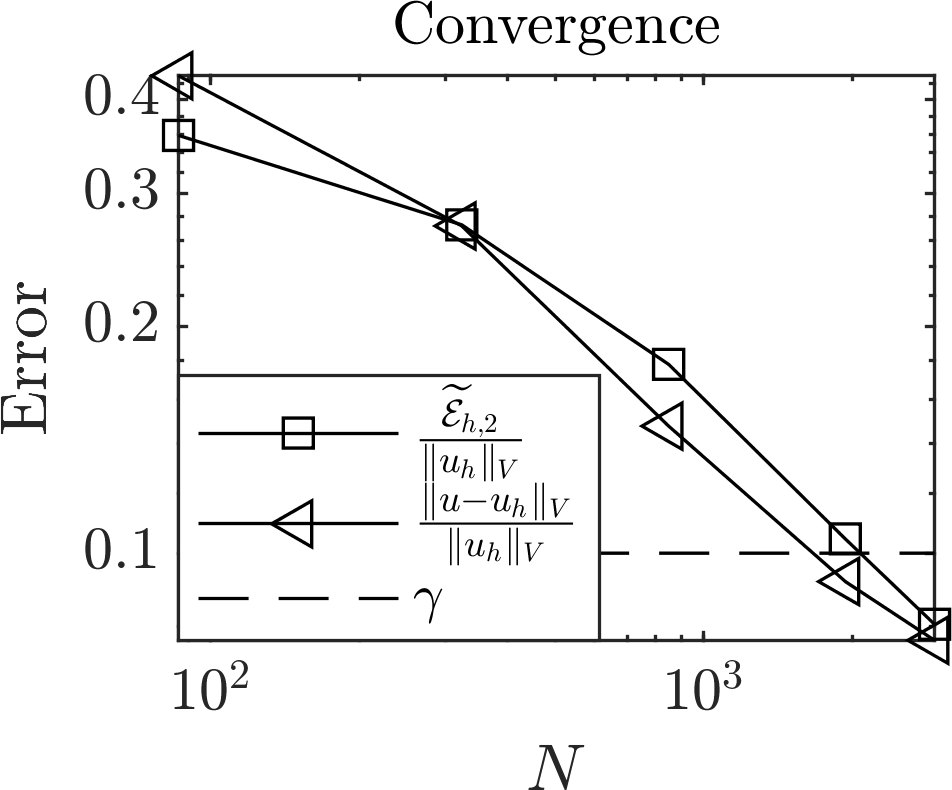} & \includegraphics{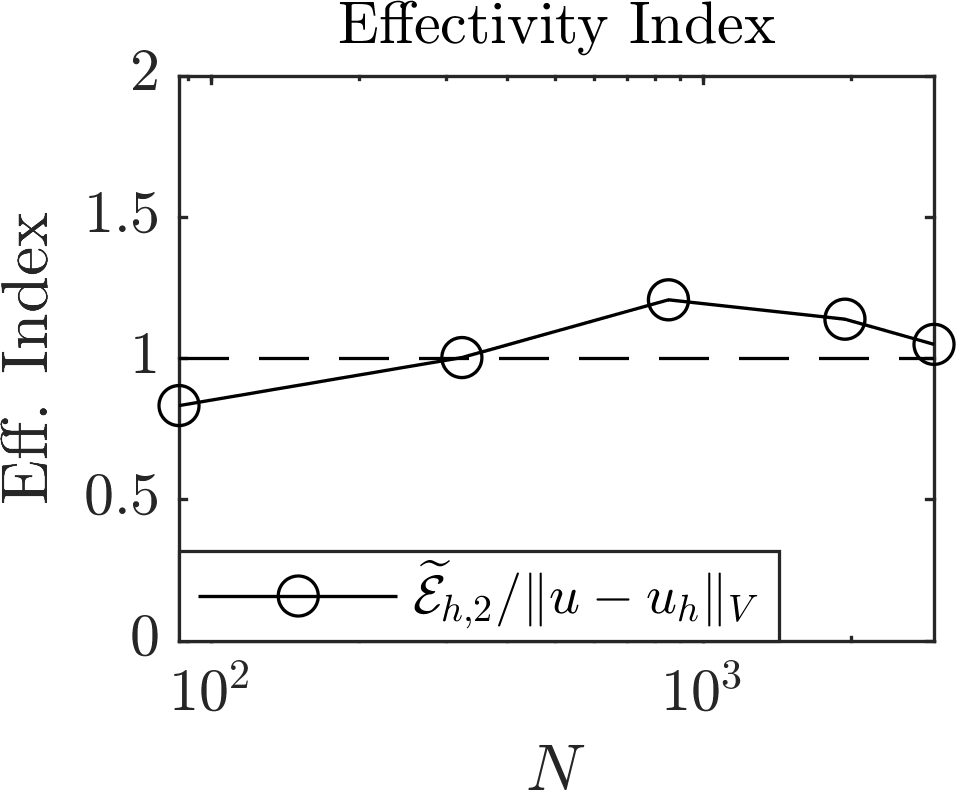}
	\end{tabular}
	\caption{Error convergence and effectivity index for the first experiment (function $u_1$) of \cref{sec:NumExp_2D}}
	\label{fig:Conv2D_ZZ}
\end{figure}

\begin{figure}[t]
	\centering
	\begin{tabular}{cccc}
		Iter. 2 & Iter. 3 & Iter. 4 & Iter. 5 \vspace{0.2cm} \\
		\includegraphics{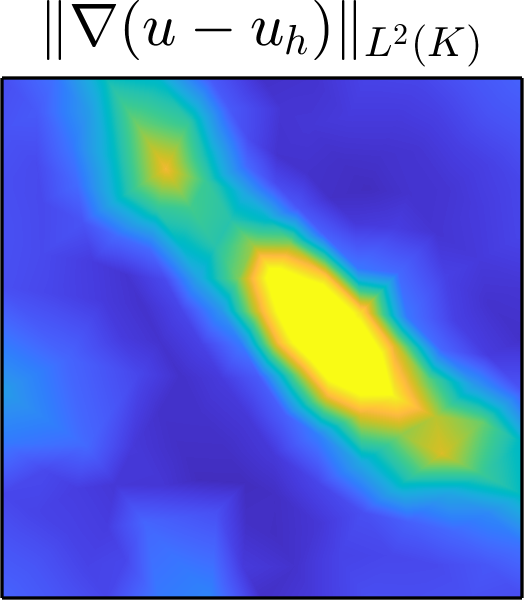} & \includegraphics{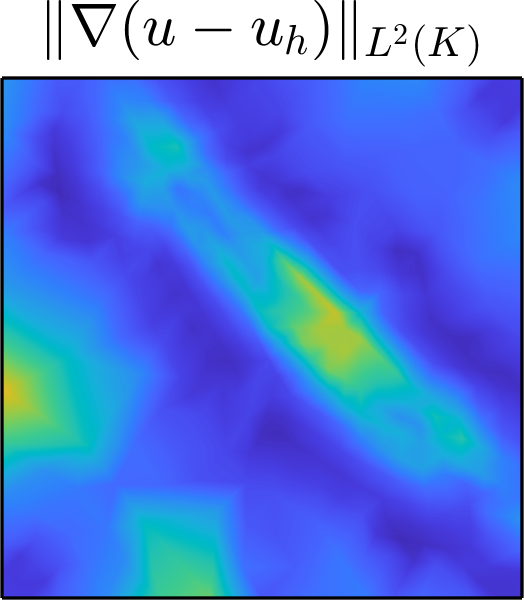} & \includegraphics{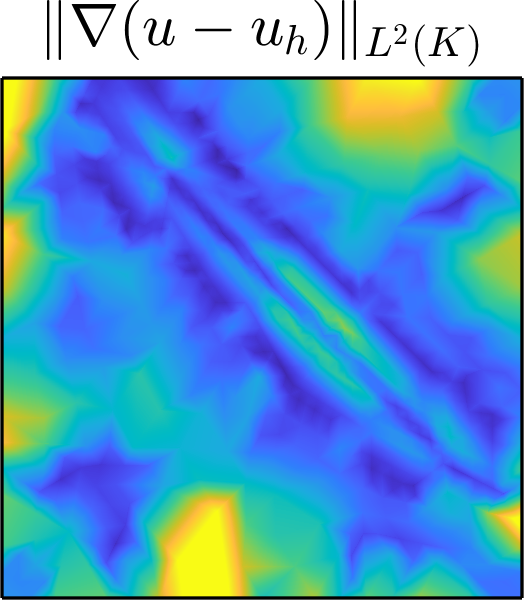} & \includegraphics{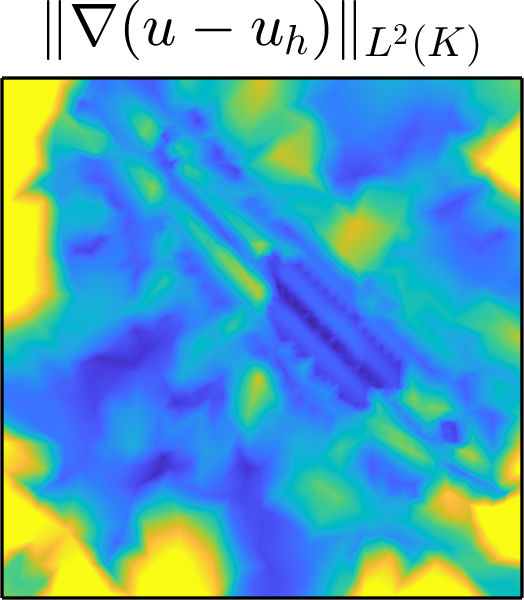} \vspace{0.2cm}\\
		\includegraphics{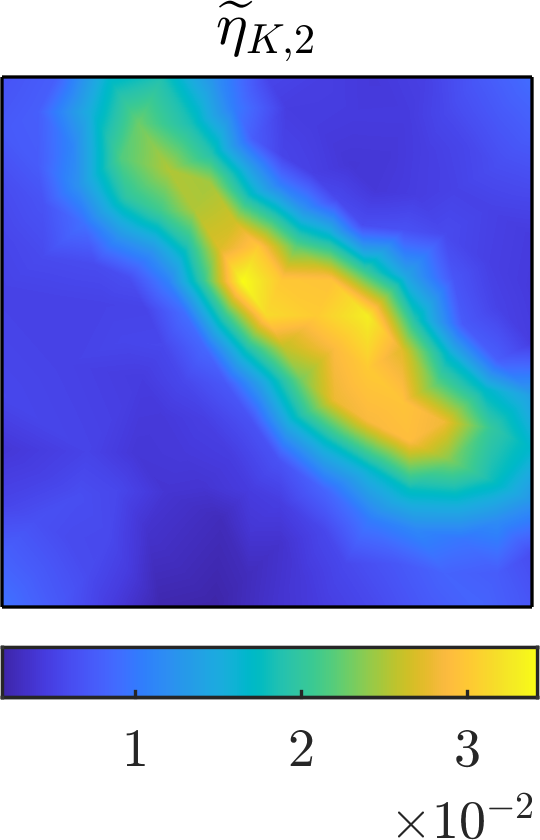} & \includegraphics{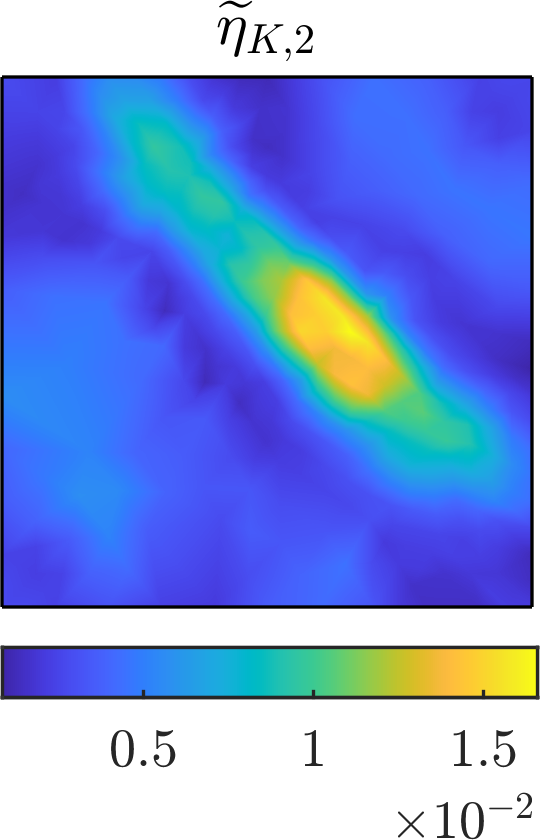} & \includegraphics{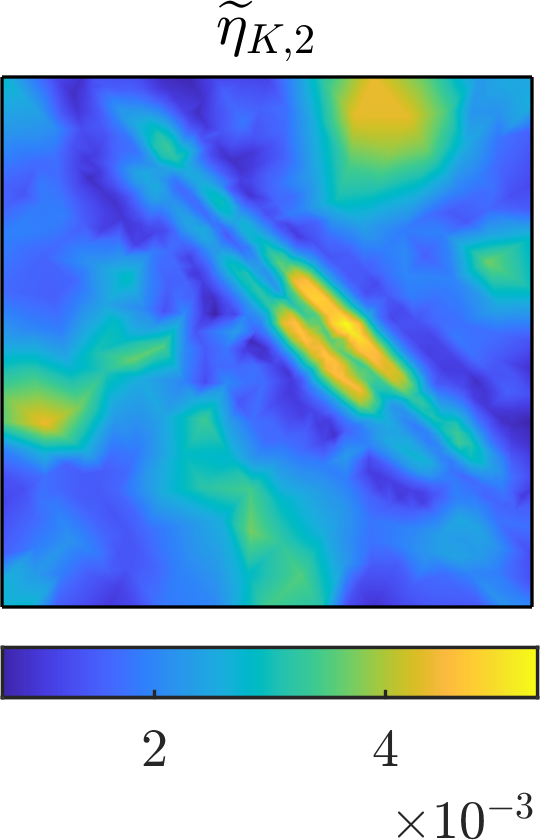} & \includegraphics{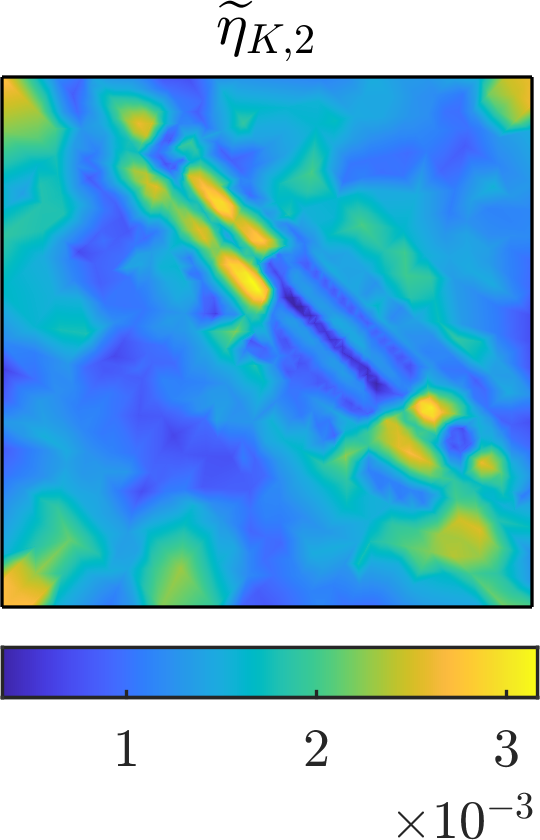}\vspace{0.2cm} \\ 
		\includegraphics{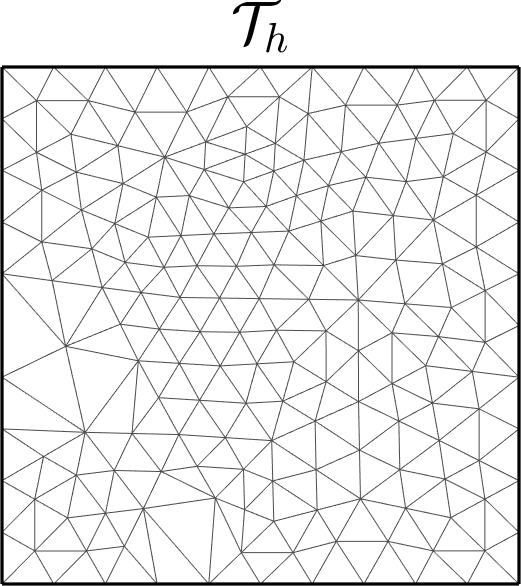} & \includegraphics{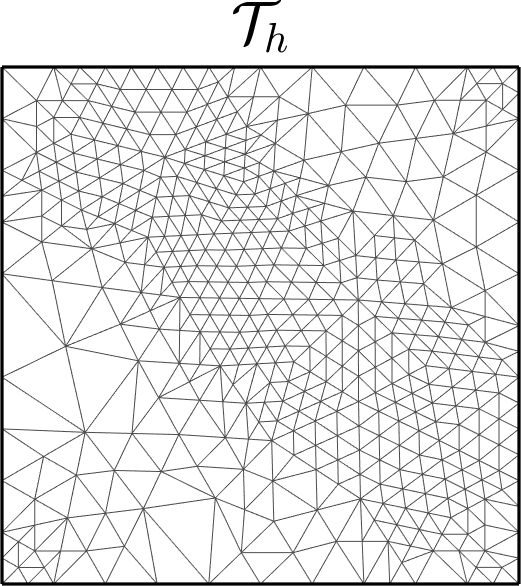} & \includegraphics{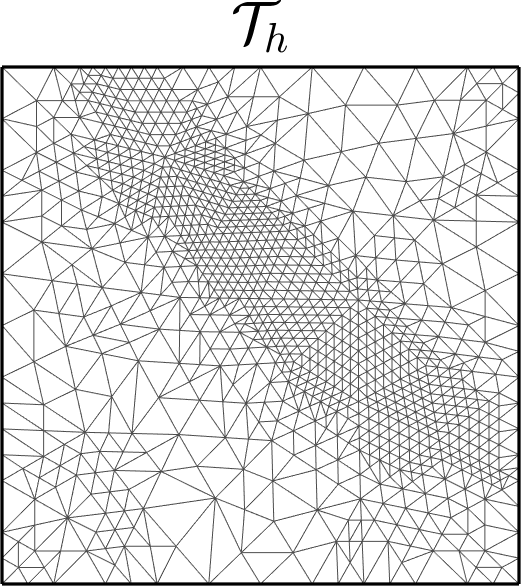} & \includegraphics{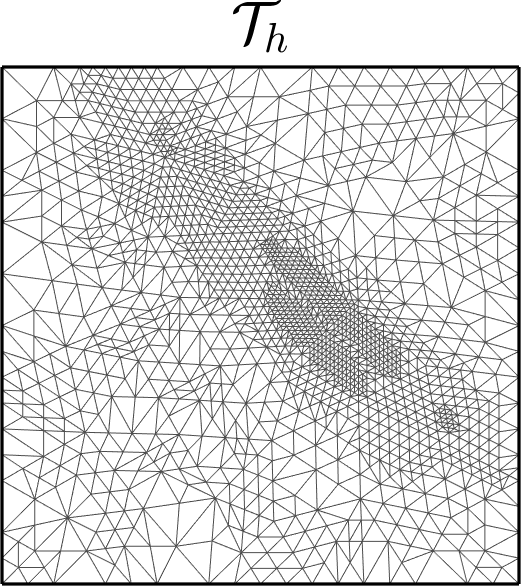}
	\end{tabular}
	\caption{Per row: True local error, local error estimator $\widetilde \eta_{K,2}$ and mesh $\mathcal T_h$ at each iteration of the adaptivity algorithm for the function $u_1$ of \cref{sec:NumExp_2D}. The color bar is shared by the first and the second rows.}
	\label{fig:Adapt2D_ZZ}
\end{figure}

We first consider $D = (0,1)^2$, the conductivity $\kappa = 1$, so that \eqref{eq:PDE} reduces to $-\Delta u = f$ with homogeneous Dirichlet boundary conditions. Moreover, we choose the right-hand side $f$ such that
\begin{equation}
	u_1(x,y) = -x (1 - x)y(1-y) \arctan\left( \beta\left(\frac{x + y}{\sqrt{2}} - \frac{4}{5}\right)\right),
\end{equation}
where $\beta > 0$. The solution has a steep transition around the line $\{y = 4\sqrt{2}/5 - x\}$, whose steepness is proportional to the parameter $\beta$. In \cref{fig:ExSol2D_ZZ}, we show the exact solution for $\beta = 20$, which we fix for this experiment. We initialize the adaptivity procedure with a mesh with maximum element size $h = 1/5$ and proceed with adaptation until a tolerance $\gamma = 0.1$. In \cref{fig:Conv2D_ZZ} we show the convergence of $\widetilde {\mathcal E}_{h,2}$ with respect to the convergence of the true error, as well as the the effictivity index for this experiment. We can see that the estimator indeed captures the error globally. In \cref{fig:Adapt2D_ZZ}, we show the behavior of the local contributions $\widetilde \eta_{K,2}$ with respect to the true error on each element, as well as the mesh adaptation. We can see that the error estimator succeeds in identifying the region where gradients are the steepest and proposes a mesh which appears adapted to this problem.  

We then consider the L-shaped domain with the re-entrant corner on the origin, i.e. $D = (-1, 1)^2 \setminus (-1, 0)^2$. We set $\kappa = 1$, $f = 0$ and fix a inhomogeneous Dirichlet boundary conditions $u = g$ on $\partial D$, with $g$ chosen such that the exact solution satisfies
\begin{equation}
	u_2(r, \theta) = r^{2/3}\sin\left(\frac{2}{3}\left(\theta + \frac{\pi}{2}\right)\right),
\end{equation}
where $(r,\theta) \in \R^+ \times (0, 2\pi]$ are the polar coordinates in $\R^2$. The exact solution of this problem is given in \cref{fig:ExSol2D_L}. Let us remark that the gradient of the exact solution is singular at the re-entrant corner, and we expect the mesh to be refined consequently at the singularity. For this experiment, we fix the tolerance $\gamma = 0.03$, and initialize the mesh to have a maximum element size of $h = 1/3$. Results, given in \cref{fig:Conv2D_L} and \cref{fig:Adapt2D_L}, show on the one hand that the estimator reproduces well the behavior of the global error during adaptation, and on the other hand that the mesh is progressively refined at the singularity as expected. 

\begin{figure}[t]
	\centering
	\includegraphics[]{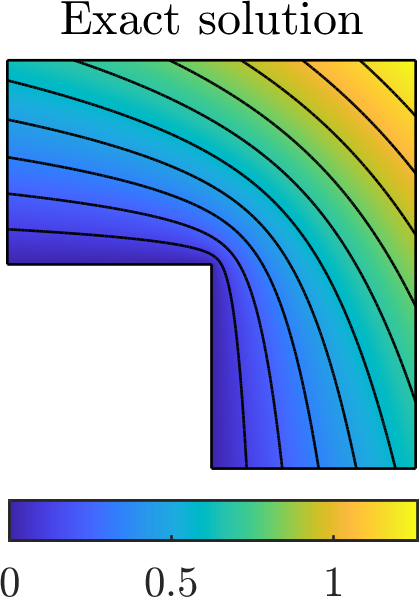}
	\caption{Exact solution $u_2$ for the experiment of \cref{sec:NumExp_2D}}
	\label{fig:ExSol2D_L}
\end{figure}
\begin{figure}[t]
	\centering
	\begin{tabular}{cc}
		\includegraphics[]{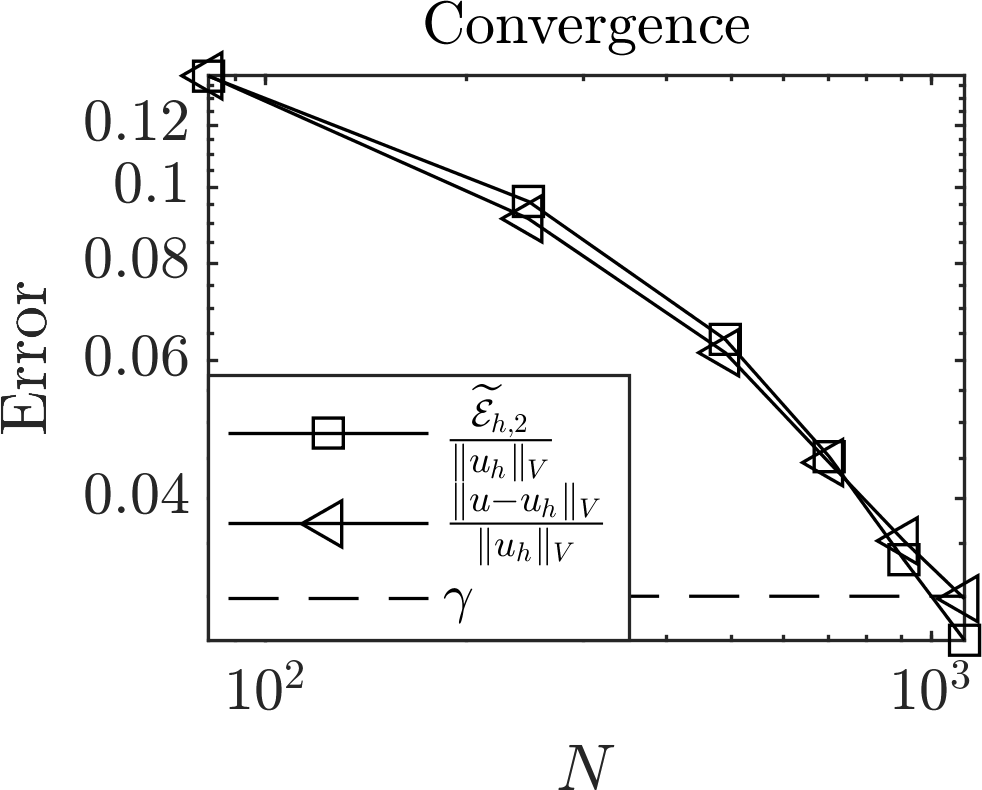} & \includegraphics{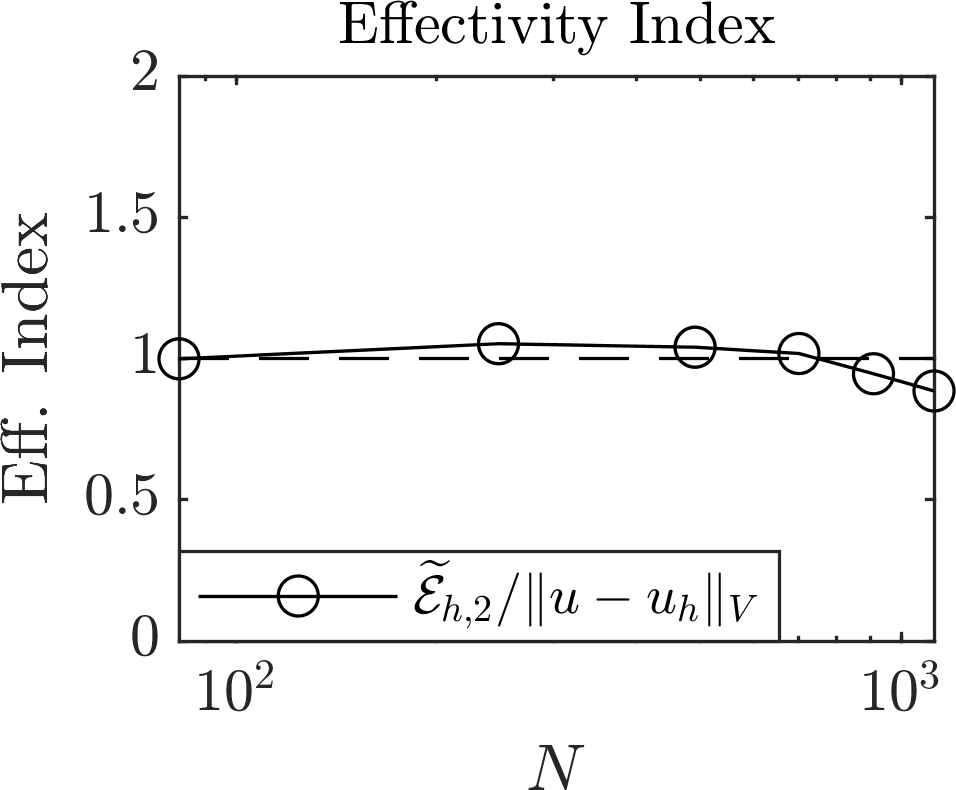}
	\end{tabular}
	\caption{Error convergence and effectivity index for the second experiment (function $u_2$) of \cref{sec:NumExp_2D}}
	\label{fig:Conv2D_L}
\end{figure}
\begin{figure}[t]
	\centering
	\begin{tabular}{cccc}
		Iter. 2 & Iter. 3 & Iter. 4 & Iter. 5 \vspace{0.2cm} \\
		\includegraphics{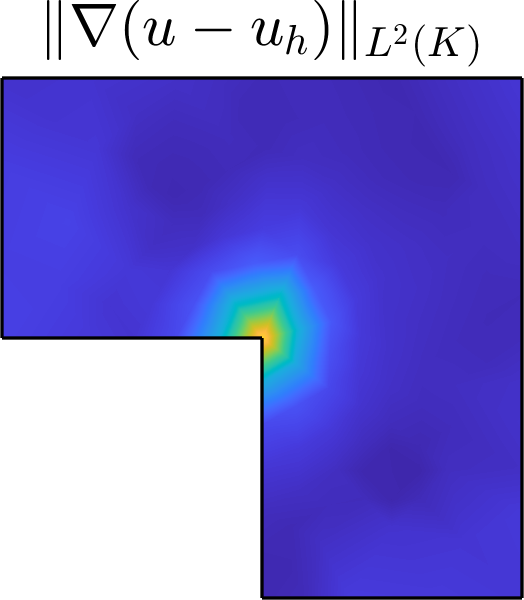} & \includegraphics{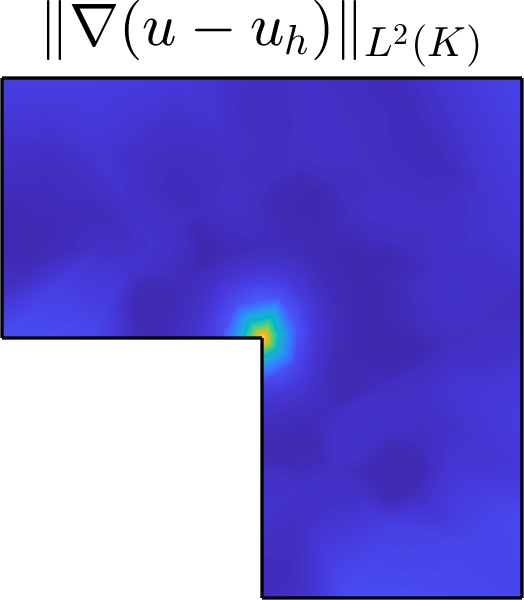} & \includegraphics{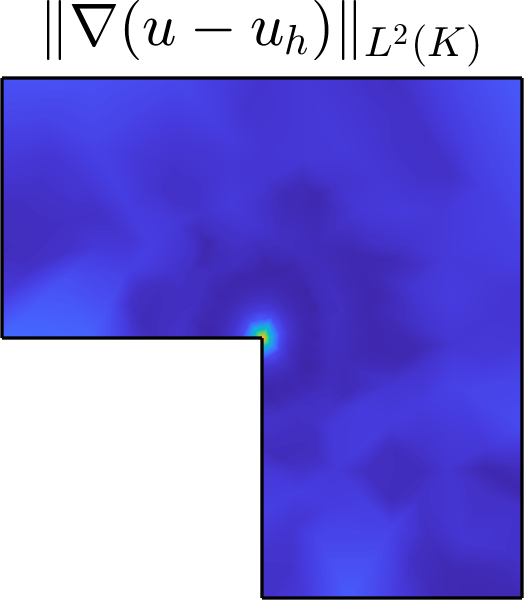} & \includegraphics{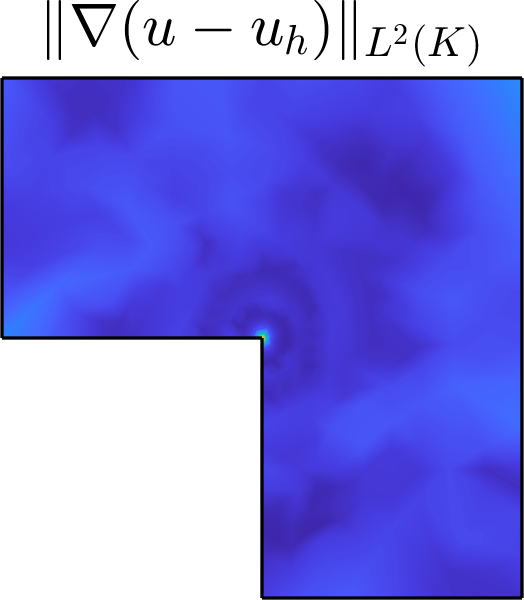} \vspace{0.2cm}\\
		\includegraphics{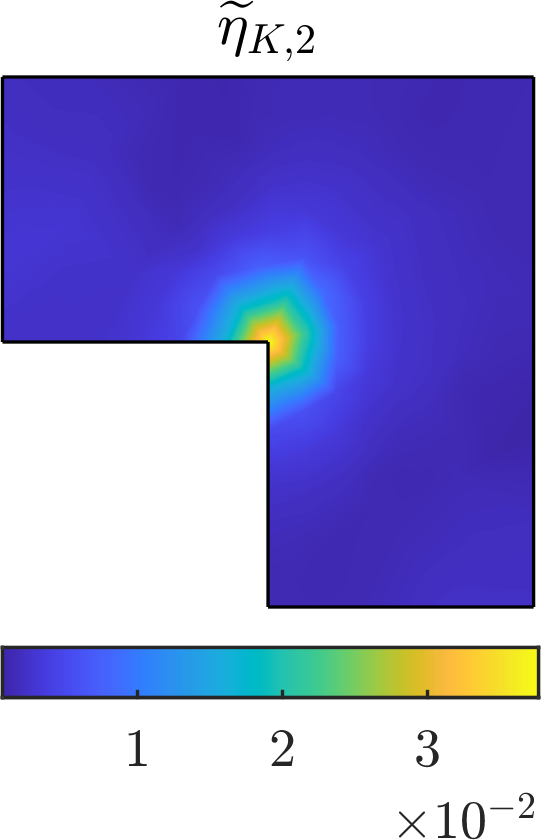} & \includegraphics{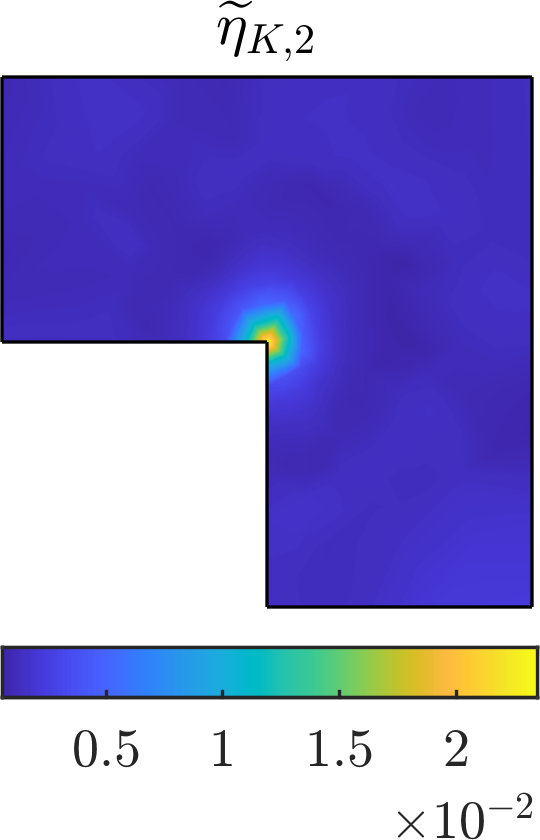} & \includegraphics{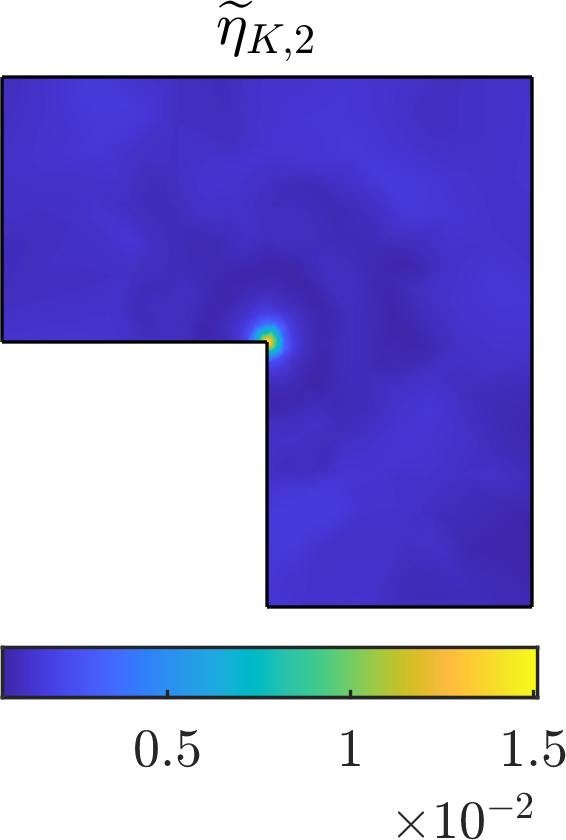} & \includegraphics{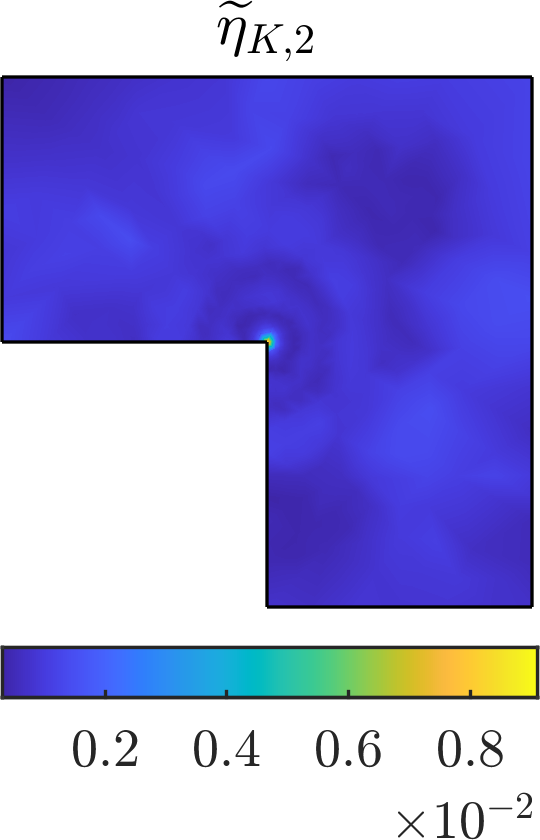}\vspace{0.2cm} \\ 
		\includegraphics{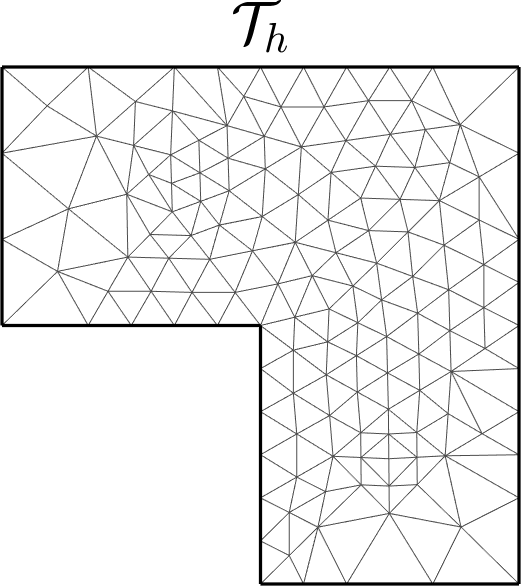} & \includegraphics{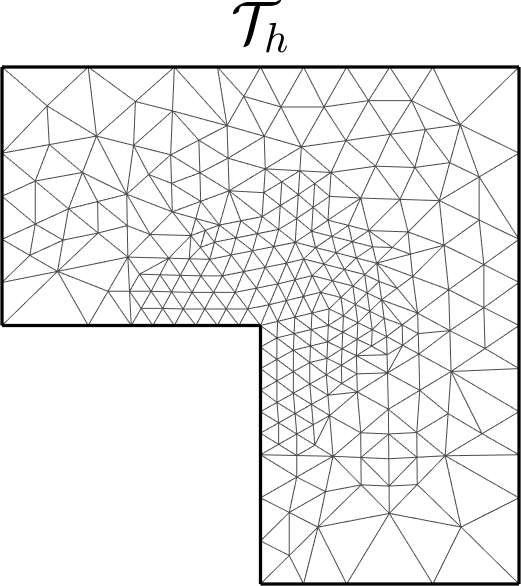} & \includegraphics{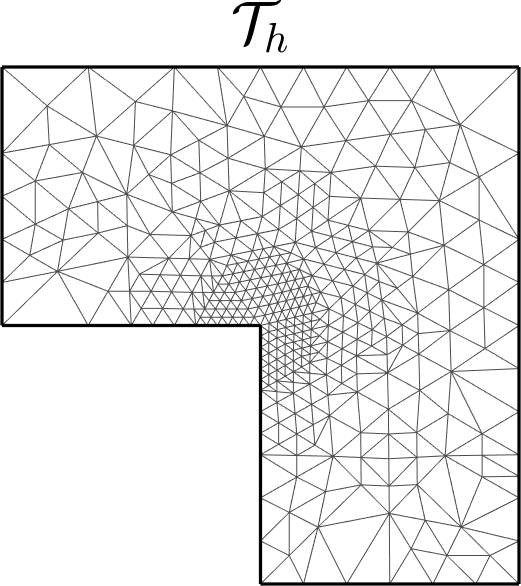} & \includegraphics{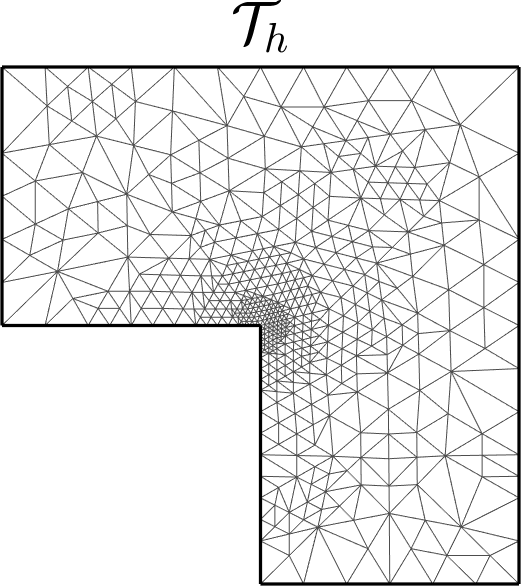}
	\end{tabular}
	\caption{Per row: True local error, local error estimator $\widetilde \eta_{K,2}$ and mesh $\mathcal T_h$ at each iteration of the adaptivity algorithm for the function $u_2$ of \cref{sec:NumExp_2D}. The color bar is shared by the first and the second rows.}
	\label{fig:Adapt2D_L}
\end{figure}

\section{The RM-FEM for Bayesian Inverse Problems}\label{sec:BIP}

Probabilistic numerical methods have been demonstrated to be particularly effective in the context of Bayesian inverse problems \cite{CGS17, AbG20, CCC16, LST18, COS17, OCA19, COS17b}. We consider the framework of \cite[Section 3.4]{DaS16} and introduce the parameterized PDE
\begin{equation}\label{eq:PDE_Inverse}
	\begin{alignedat}{2}
		-\nabla \cdot (\exp(\theta) \nabla u) &= f, \quad &&\text{in } D, \\
		u &= 0, &&\text{on } \partial D,
	\end{alignedat}
\end{equation}
where $D$ is an open bounded set of $\R^d$ and $\theta \colon D \to \R$ is a scalar function. In particular, we let $\theta$ be such that problem \eqref{eq:PDE_Inverse} is well-posed, i.e., $\kappa = \exp(\theta) \in L^\infty(D)$ and $\kappa \geq \underline{\kappa} > 0$, and we denote by $X$ the space of admissible values for $\theta$. We introduce the solution operator $\mathcal S \colon X \to V$ such that $\mathcal S \colon \theta \mapsto u$, and the observation operator $\mathcal O\colon V \to \R^m$, which maps the solution of the PDE to point evaluations inside the domain on points $x^* = x_1^*, \ldots, x_m^*$, i.e. $\mathcal O \colon u \mapsto y \defeq (u(x_1^*), \ldots, u(x_m^*))^\top$. Moreover, we denote by $\mathcal G = \mathcal O \circ \mathcal S$, $\mathcal G \colon X \to \R^m$, the so-called forward operator, which maps the parameter to the observations. We then have the Gaussian observation model
\begin{equation}\label{eq:ObsModel}
	y = \mathcal G(\theta) + \beta, \quad \beta \sim \mathcal N(0, \Sigma),
\end{equation}
where $\Sigma \in \R^{m\times m}$ is a non-singular covariance matrix on $\R^m$. Given an observation $y^* = (u(x_1^*), \ldots, u(x_m^*))^\top$ associated to an unknown value $\theta^* \in X$ and corrupted by observational noise $\beta \in \R^m$ the inverse problem can then be stated as:
\begin{equation}\label{eq:InverseProblem}
	\text{find } \theta^* \in X \text{ given observations } y^* = \mathcal G(\theta^*) + \beta.
\end{equation}
The randomness and the mismatch between the dimensionality of the unknown and of the observation make problem \eqref{eq:InverseProblem} ill-posed. Regularization can be achieved in the Bayesian framework (see e.g. \cite{Stu10,DaS16}) by introducing probability measures on the unknown, which summarize prior knowledge and the information provided by data. We briefly introduce the Bayesian paradigm in the remainder of this section. First, we restrict ourselves to the space $\mathcal H = \mathcal C^0(\overline D) \cap V$, which is a valid subspace of admissible values for $\theta$, i.e., $\mathcal H \subset X$. We then introduce a prior measure $\mu_0$ on $\mathcal H$, encoding all knowledge on the unknown $\theta$ before observations are obtained. In particular, we consider a Gaussian prior measure $\mu_0 = \mathcal N(0, \Gamma_0)$ on $\mathcal H$, where $\Gamma_0$ is a positive semi-definite covariance operator on $\mathcal H$, and such that $\mu_0(\mathcal H)= 1$, so that any sample from $\mu_0$ is in $\mathcal H$, a.s. We set the mean of the prior measure to zero without loss of generality. A broader class of prior measures could be employed, such as Besov or heavy-tailed measures \cite{DaS16, Sul17}, but we restrict ourselves to the Gaussian case for simplicity. Finally, we can obtain a measure $\mu^y$ on $X$ encoding all the knowledge on $\theta$ given the prior and the observations. We call this measure the posterior, and formally compute with Bayes' formula its Radon--Nykodim derivative with respect to the prior as
\begin{equation}\label{eq:Posterior}
	\frac{\d \mu^y}{\d \mu_0}(\theta) = \frac1{Z^y}\exp(-\Phi(\theta; y)),
\end{equation}
where $Z^y$ is the normalization constant
\begin{equation}\label{eq:NormConstant}
	Z^y = \int_{\mathcal H} \exp(-\Phi(\theta;y)) \dd\mu_0(\theta),
\end{equation}
and where for any $y \in \R^m$ the potential $\Phi(\cdot; y) \colon X \to \R$ is given due to the Gaussian assumption on the noise $\beta \sim \mathcal N(0, \Sigma)$ by
\begin{equation}
	\Phi(\theta; y) = \frac12\norm{\Sigma^{-1/2}\left(\mathcal G(\theta) - y\right)}^2_2.
\end{equation}
For economy of notation, in the following we drop the dependence of $\mu^y$ and $Z^y$ on the data, and simply denote these quantities $\mu$ and $Z$. In order for \eqref{eq:Posterior} to be well-defined, the posterior measure needs to be absolutely continuous with respect to the prior. This is ensured under some conditions on the forward map $\mathcal G$, in particular, Lipschitz continuity and some controlled growth condition. Precisely, $\mathcal G$ can be shown to satisfy \cite[Assumption 2.7]{Stu10}. We then choose the covariance $\Gamma_0 = -\Delta^{-\alpha}$ with $\alpha > d/2$ and where we equip the Laplacian with homogeneous boundary conditions. Fractional powers of the Laplacian should be understood as per \cite[Section 2]{Stu10}. With this choice, we have that $\mu_0(\mathcal H) = 1$. Together with the properties of $\mathcal G$ we can conclude by \cite[Theorem 3.4]{Stu10} that the posterior is indeed given by \eqref{eq:Posterior} and that the Bayesian inverse problem is well-posed, meaning that the dependence of the posterior on the observations is absolutely continuous.

In practice to evaluate the solution operator $\mathcal S$ we recur to the FEM and consider for an $h > 0$ the forward operator $\mathcal S_h \colon X \to V_h$ which maps $\mathcal S_h \colon \theta \mapsto u_h$, where $u_h$ is defined in \eqref{eq:FESolution}. Moreover, we denote by $\mathcal G_h = \mathcal O \circ \mathcal S_h$ the resulting approximate forward operator. Maintaining the same observation model and the same prior on $\mathcal H$ for the parameter as above, we consider the approximate posterior $\mu_h$ on $\mathcal H$ whose Radon--Nikodym derivative with respect to the prior is given by
\begin{equation}
	\frac{\d \mu_h}{\d \mu_0}(\theta) = \frac1{Z_h}\exp(-\Phi_h(\theta; y)),
\end{equation}
where the potential $\Phi_h$ is given by
\begin{equation}\label{eq:NumPotential}
	\Phi_h(\theta; y) = \frac12\norm{\Sigma^{-1/2}\left(\mathcal G_h(\theta) - y\right)}^2_2, 
\end{equation}
and where the normalization constant $Z_h$ is defined equivalently to \eqref{eq:NormConstant}. A natural question arising from this setting is whether the approximate posterior $\mu_h$ converges to the true posterior $\mu$ in the limit $h \to 0$. This is indeed true, as it holds $\Hell(\mu, \mu_h) \to 0$ for $h \to 0$, where the Hellinger distance $\Hell(\cdot, \cdot)$ is defined as
\begin{equation}
	\Hell(\mu, \mu_h) \defeq \sqrt{\frac12 \int_{\mathcal H} \left(\sqrt{\frac{\d \mu}{\d \mu_0}} - \sqrt{\frac{\d \mu_h}{\d \mu_0}}\right)^2 \dd \mu_0}.
\end{equation}
For a proof of this result, see \cite[Theorem 4.6]{Stu10}, where the statement above is made more precise by the introduction of convergence rates.

It has been demonstrated heuristically that the approximate posterior measure $\mu_h$ can be overly confident on the parameter if $h > 0$ is a finite value and if the observation model is precise, i.e., when the covariance $\Sigma$ of the observational noise is small \cite{CGS17, AbG20, CCC16, LST18, COS17, OCA19, COS17b}. In particular, this is accentuated when $h$ is big relatively to the forward problem, or in other words when we employ a poor approximation of the forward map $\mathcal G$. It is therefore useful in applications to have a cheap surrogate which can be evaluated quickly, without renouncing to a complete uncertainty quantification of the solution to the inverse problem. Probabilistic numerical methods can be employed for this purpose. Let $\widetilde{\mathcal S}_h \colon X \to \widetilde V_h$ be the solution operator mapping $\widetilde{\mathcal S}_h \colon \theta \mapsto \widetilde u_h$, where $\widetilde u_h$ is the RM-FEM solution of \cref{def:ProbSol}. In particular, in this context it is advisable to fix $p = 1$ in \eqref{eq:PerturbedPoints}, so that the random deviations of the probabilistic numerical solution are of the same order of magnitude than the error itself by \cref{thm:APriori}. Coherently to the definitions above, we define the random forward map $\widetilde{\mathcal G}_h \colon X \to \R^m$ as $\widetilde{\mathcal G}_h = \mathcal O \circ \widetilde{\mathcal S}_h$ and the approximate random posterior measure $\widetilde \mu_h$ on $X$ as
\begin{equation}
	\frac{\d \widetilde \mu_h}{\d \mu_0}(\theta) = \frac{\exp(-\widetilde\Phi_h(\theta; y))}{\widetilde Z_h} ,
\end{equation}
where the potential $\widetilde \Phi_h$ and the normalization constant $\widetilde Z_h$ are defined as above. Let us remark that the posterior $\widetilde \mu_h$ is a random measure, as it depends on the random variable $\alpha \colon \Omega \to \R^{N_I}$ governing the random perturbations of the mesh. To be more precise, the posterior $\widetilde \mu_h$ is a random variable $\widetilde \mu_h \colon \Omega \to \mathcal M(\mathcal H)$, where $\mathcal M(\mathcal H)$ is the space of probability measures over the space $\mathcal H$. Employing the tools of \cite{LST18} and due to \cref{thm:APriori}, it is possible to prove convergence results for $\widetilde \mu_h$ towards the true posterior $\mu$ for $h \to 0$ in the Hellinger metrics.

\begin{remark} There exist other approaches to factor the effects of discretization into Bayesian inverse problems. In particular, numerical error can be treated as modelling discrepancies. Under the assumption that the error is independent of the observational noise and of the inferred parameter $\theta$, a viable alternative to probabilistic methods is employing the techniques introduced in \cite{CDS18, CES14}, and further applied and analysed e.g. in \cite{AbD20, AGZ20}. We argue that while assuming numerical errors to be independent of the observational noise is reasonable, their independence from the inferred parameter itself is not, at least for the inverse problem \eqref{eq:InverseProblem}.  
\end{remark}

\subsection{Implementation Details}\label{sec:BIP_Practice}

We now detail how one can solve the inverse problems above in practice. Given a smooth functional $\Psi$ on $X$, we are interested in approximating the quantities $\E_{\mu_h}[\Psi(\cdot)]$ and $\E_{\widetilde{\mu}_h}[\Psi(\cdot)]$, where $\E_{\mu_h}$ denotes expectation with respect to the measure $\mu_h$ (respectively $\widetilde \mu_h$). To be more precise, in the probabilistic case we are interested to the quantity $\E[\E_{\widetilde \mu_h}[\Psi(\cdot)]]$, where the outer expectation is taken with respect to the random perturbations intrinsic to the RM-FEM, and where for a sufficiently smooth functional $\Psi$ the expectations can be exchanged by means of Fubini's theorem. Due to the high-dimensionality of the problem, Monte Carlo techniques are a natural choice. Let $N_{\mathrm{MC}}$ be a positive integer and let us assume that we have samples $\{\theta^{(i)}\}_{i=0}^{N_{\mathrm{MC}}}\sim \mu_h$, not necessarily independent. Then, defining
\begin{equation}\label{eq:PostMean}
	\widehat \Psi_{\mu_h} \defeq \frac1{N_{\mathrm{MC}}} \sum_{i=1}^{N_{\mathrm{MC}}}\Psi(\theta^{(i)}).
\end{equation}
we have $\widehat \Psi_{\mu_h} \approx \E_{\mu_h}[\Psi(\cdot)]$. For the probabilistic case, let $\widetilde N_{\mathrm{MC}}^{\mathrm{out}}$ and $\widetilde N_{\mathrm{MC}}^{\mathrm{in}}$ be positive integers, let $\{\widetilde \mu_h^{(j)}\}_{j=1}^{\widetilde N_{\mathrm{MC}}^{\mathrm{out}}}$ be a sequence of realizations of the measure $\widetilde \mu_h$, obtained with a corresponding series of random perturbations of the mesh, and let $\{\widetilde \theta^{(j,i)}\}_{i=1}^{\widetilde N_{\mathrm{MC}}^{\mathrm{in}}} \sim \widetilde \mu_h^{(j)}$. Then, we define
\begin{equation}\label{eq:PostMeanProb}
	\widehat \Psi_{\widetilde \mu_h} \defeq \frac1{\widetilde N_{\mathrm{MC}}^{\mathrm{out}}\widetilde N_{\mathrm{MC}}^{\mathrm{in}}} \sum_{i=1}^{\widetilde N_{\mathrm{MC}}^{\mathrm{out}}}\sum_{j=1}^{\widetilde N_{\mathrm{MC}}^{\mathrm{in}}}\Psi(\theta^{(j,i)}),
\end{equation}
and we have $\widehat \Psi_{\widetilde \mu_h} \approx \E[\E_{\widetilde{\mu}_h}[\Psi(\cdot)]]$. Still, the random variable $\theta$ is infinite-dimensional, and we need to define a finite-dimensional approximation in order to obtain a practical procedure to generate the above samples and thus solve the inverse problem. We recur to the Karhunen--Loeve expansion (KL). Denoting by $\{(\lambda_i, \phi_i)\}_{i\geq 1}$ the ordered eigenvalues/eigenfunctions of the prior covariance $\Gamma_0$, a function $\theta \sim \mu_0$ is given by the convergent sum
\begin{equation}
	\theta = \sum_{i \geq 1} \sqrt{\lambda_i} \phi_i \xi_i, 
\end{equation}
where $\{\xi_i\}_{i\geq 1} \iid \mathcal N(0, 1)$. We then let $ N_{\mathrm{KL}}$ be a positive integer and truncate the sum above as
\begin{equation}\label{eq:KarunhenLoeve}
	\theta = \sum_{i = 1}^{ N_{\mathrm{KL}}} \sqrt{\lambda_i} \phi_i \xi_i, 
\end{equation}
thus obtaining a function $\theta \in \mathcal H$ which is approximately sampled from $\mu_0$. Due to the super-quadratic decay of the eigenvalues of $\Gamma_0 = -\Delta^{-\alpha}$ for $\alpha > d/2$, disregarding the tail of the sum causes a negligible error in case $ N_{\mathrm{KL}}$ is chosen appropriately large. Our inversion problem is therefore shifted to computing the posterior distribution on a finite-dimensional parameter, comprising the coefficients of the expansion \eqref{eq:KarunhenLoeve}. We define the mapping $\mathcal K \colon \R^{ N_{\mathrm{KL}}} \to \mathcal H$, $\mathcal K \colon \xi \mapsto \theta$ by \eqref{eq:KarunhenLoeve}. The prior measure on $\xi$ is $\mu_0 = \mathcal N(0, I)$, with $I$ being the identity matrix of dimension $ N_{\mathrm{KL}}\times  N_{\mathrm{KL}}$, and we denote by $\pi_0$ the density of $\mu_0$ with respect to the Lebesgue measure. The density $\pi_h$ of the posterior on the coefficients $\xi$ given the observations is then 
\begin{equation}\label{eq:PosteriorDensity}
	\pi_h(\xi) = \frac1{Z_h} \pi_0(\xi) \, \exp(-\Phi_h(\mathcal K(\xi); y)), 
\end{equation}
where $\Phi_h$ is defined in \eqref{eq:NumPotential} and $Z_h$ is the normalization constant
\begin{equation}
	Z_h = \int_{\R^{ N_{\mathrm{KL}}}} \exp(-\Phi_h(\mathcal K(\xi); y)) \, \pi_0(\xi) \dd \xi.
\end{equation}
The same procedure can be applied seamlessly to the probabilistic case, thus obtaining a random posterior density $\widetilde \pi_h$ over $\R^{ N_{\mathrm{KL}}}$ for the coefficient $\xi$. 

The last detail missing is how to produce samples in order to obtain the approximations \eqref{eq:PostMean} and \eqref{eq:PostMeanProb}. Being the normalizations constant unknown, we employ Markov chain Monte Carlo techniques (MCMC) (see e.g. \cite[Chapter 3]{KaS05} or \cite[Chapter 6]{KTB13}), which proceed by generating an ergodic Markov chain whose invariant density is the desired posterior. Successive states of the aforementioned Markov chain then serve as samples from the posterior density. We choose to employ the Metropolis--Hastings (MH) algorithm, which we here briefly detail. The Markov chain is built employing a symmetric proposal $q \colon \R^{ N_{\mathrm{KL}}} \times \R^{ N_{\mathrm{KL}}} \to \R$ satisfying $q(x,y) = q(y, x)$ for all $x,y \in \R^{N_{\mathrm{KL}}}$ and such that for any fixed $x \in \R^{N_{\mathrm{KL}}}$ the function $q(x, \cdot)$ is a probability density, and with an acceptance-rejection strategy. In particular, given an initial guess $\xi^{(1)}$, the algorithm proceeds for $i = 2, \ldots, N_{\mathrm{MC}}$ as
\begin{enumerate}
	\item Sample $\bar \xi^{(i)} \sim q(\xi^{(i-1)}, \cdot)$;
	\item Set $\xi^{(i)} = \bar \xi^{(i)}$ with probability $\alpha$, and $\xi^{(i)} = \xi^{(i-1)}$ with probability $1-\alpha$, where
	\begin{equation}
		\alpha = \min\left\{\frac{\pi_h(\bar \xi^{(i)})}{\pi_h(\xi^{(i-1)})}, 1\right\}.
	\end{equation} 
\end{enumerate} 
Let us remark that the normalization constant $Z_h$ does not need to be known to run the algorithm, since we only compute ratios of posterior densities. Moreover, we note that the proposal distribution is the only tunable element of the MH algorithm. The easiest choice, at least for implementation, would be to fix $q(x, \cdot) = \mathcal N(x, \sigma^2 I)$ for some user-prescribed variance $\sigma^2$. Unfortunately, the quality of the resulting Markov chain is not robust with respect to $\sigma$. In particular, if $\sigma$ is too small, the probability to accept is too large and the Markov chain fails to effectively explore the posterior. At the other end of the spectrum, if $\sigma$ is too large the probability of accepting a new sample reduces drastically, and the Markov chain presents a sticky behaviour. We therefore decide to employ the robust adaptive Metropolis algorithm (RAM) (see \cite{Vih12} for details), in which the proposal is $q(x, \cdot) = \mathcal N(x, \Sigma_q)$, where the covariance $\Sigma_q$ is adapted on the fly to obtain a user-specified final acceptance ratio, i.e. the ratio between the accepted and the total number samples, which should be roughly $25\%$ (see e.g. \cite{Vih12}). Another viable option for the implementation of MCMC could have been the preconditioned Crank--Nicolson MCMC (pCN-MCMC) of \cite{CRS13, HSV14}, which is tailored for high-dimensional inverse problems. 

For the probabilistic case, we perform a run of the MH, implemented with RAM proposal, with $\widetilde N_{\mathrm{MC}}^{\mathrm{in}}$ iterations for each one of the $\widetilde N_{\mathrm{MC}}^{\mathrm{out}}$ realizations of the random mesh, thus obtaining the approximation \eqref{eq:PostMeanProb}. 

\begin{remark} The total number of samples is given by in the probabilistic case by $\widetilde N_{\mathrm{MC}}^{\mathrm{in}} \cdot \widetilde N_{\mathrm{MC}}^{\mathrm{out}}$. One could argue that, for a fair comparison between the probabilistic and the deterministic case in terms of computational cost, one should choose $N_{\mathrm{MC}} \approx \widetilde N_{\mathrm{MC}}^{\mathrm{in}} \cdot \widetilde N_{\mathrm{MC}}^{\mathrm{out}}$. In fact, since the ``outer'' Monte Carlo simulation can be performed in parallel, the correct scaling is $N_{\mathrm{MC}} \approx \widetilde N_{\mathrm{MC}}^{\mathrm{in}}$. Moreover, due to \cref{rem:CompCost}, the number of random meshes does not need to be chosen excessively large. \end{remark}

\subsection{Numerical Experiments}\label{sec:NumExp_BIP}

In this section we present numerical experiments highlighting the beneficial effects of adopting the probabilistic framework of RM-FEM in the context of Bayesian inverse problems.

\subsubsection{One-Dimensional Case}\label{sec:NumExpBIP_1D}

\begin{figure}[t]
	\centering
	\begin{tikzpicture}
		\draw (0,0) -- (8.7, 0) -- (8.7, 0.5) -- (0, 0.5) -- (0, 0);
		\draw[] (0.1, 0.25) -- (0.6, 0.25) node[right,color=black] {\small Truth $\kappa^*$};
		\draw[style=dashed] (2.3, 0.25) -- (2.8, 0.25) node[right,color=black] {\small Mean $\E_\mu[\kappa]$};
		\draw[fill=gray!50,draw=none] (5.1, 0.2) -- (5.6, 0.2) -- (5.6, 0.3) -- (5.1, 0.3) -- (5.1, 0.2); 
		\node[right] at (5.6, 0.25){\small Confidence Interval} ;
	\end{tikzpicture}
	
	\vspace{0.5cm}
	\begin{tabular}{ccc}
		\includegraphics[]{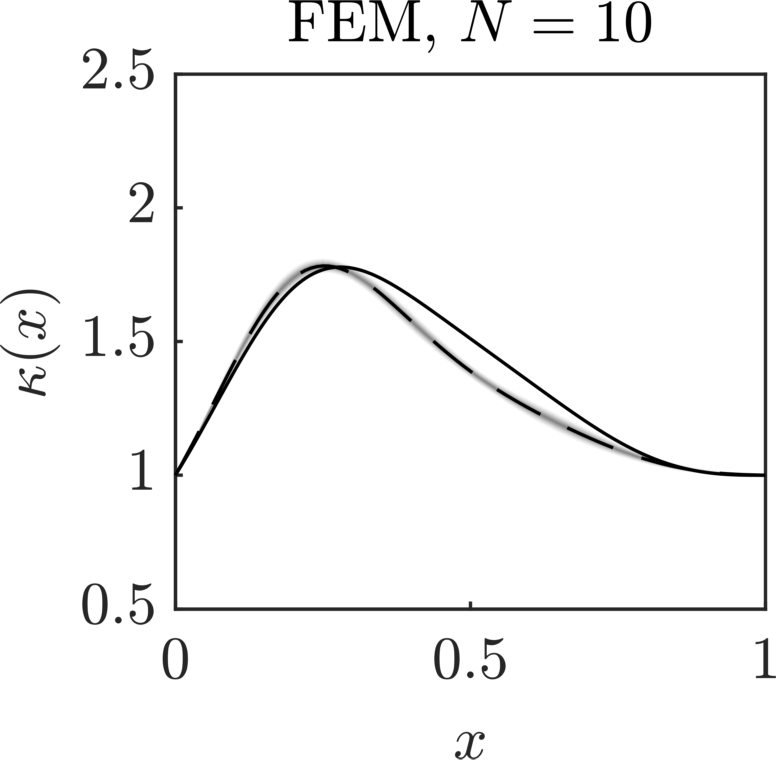} & \includegraphics[]{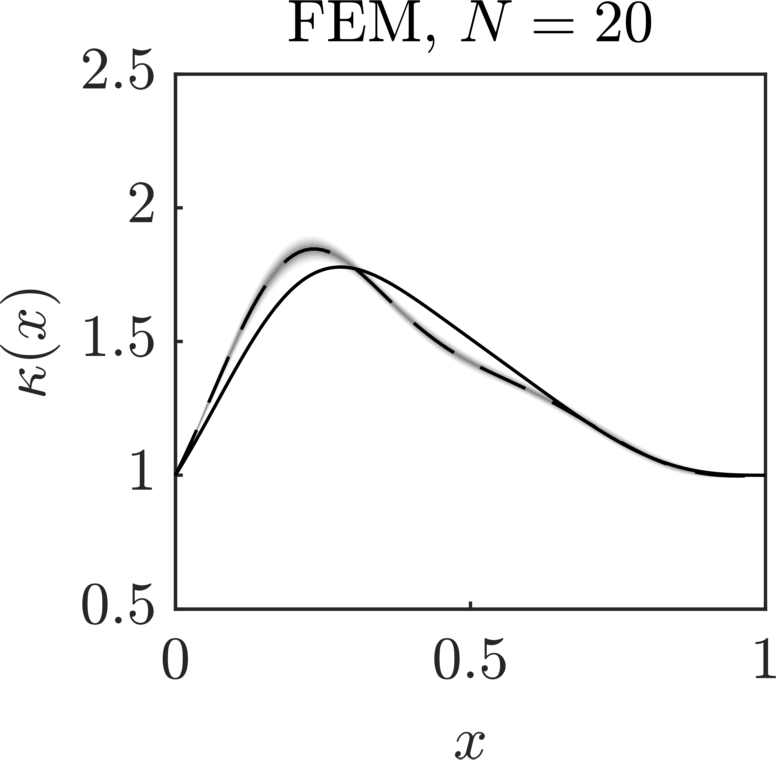} & \includegraphics[]{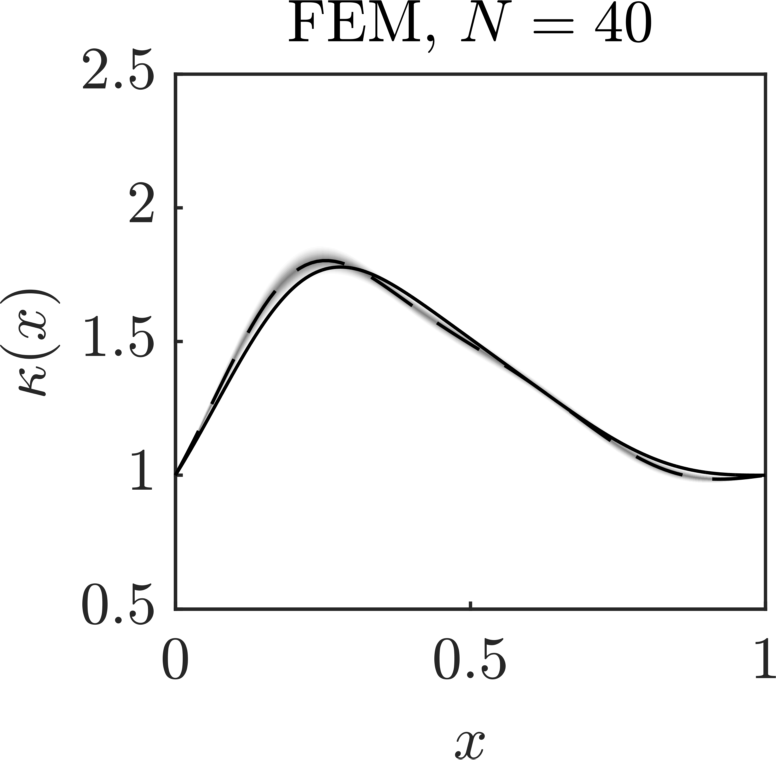} \vspace{0.1cm} \\
		\includegraphics[]{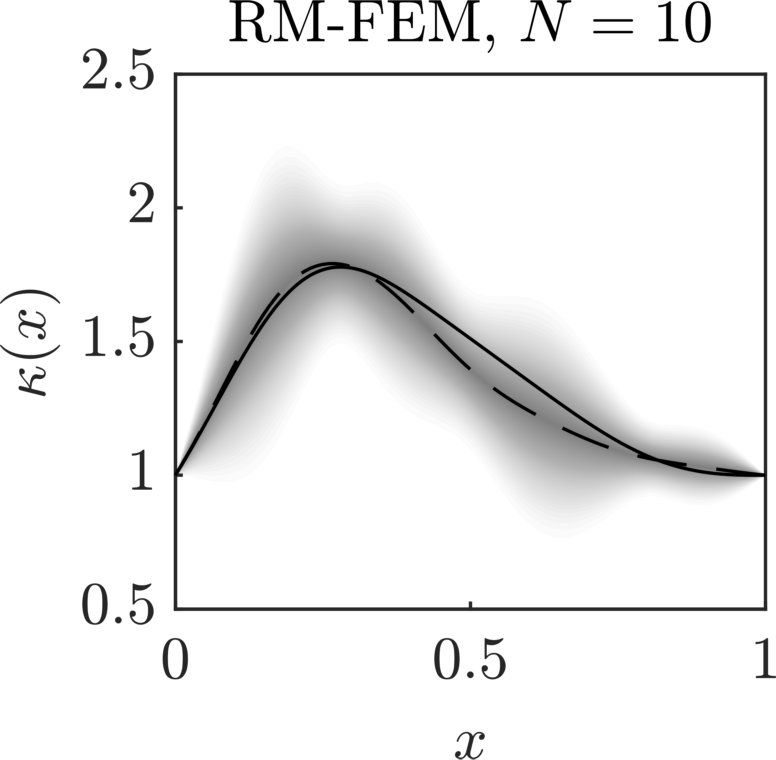} & \includegraphics[]{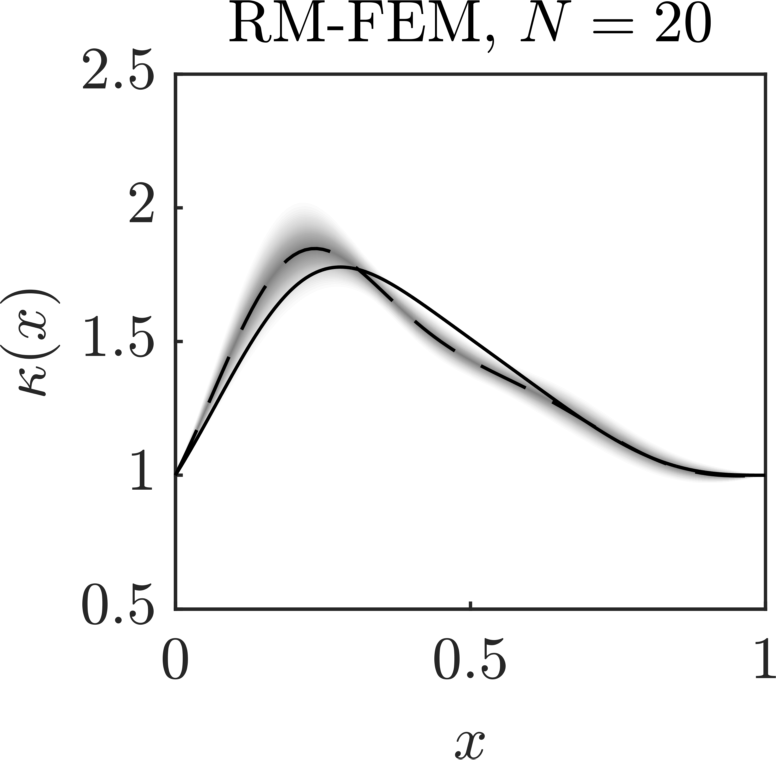} & \includegraphics[]{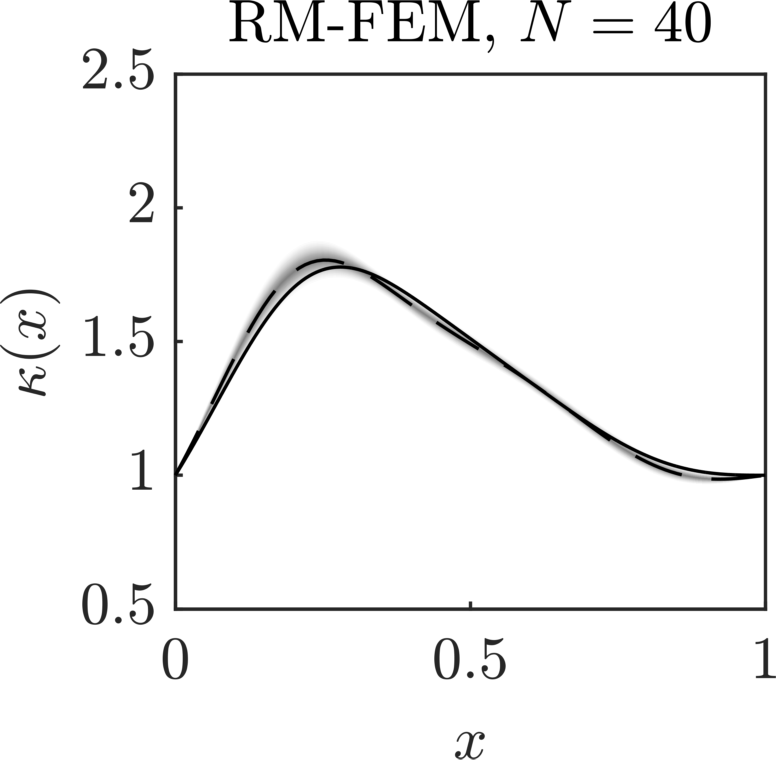}
	\end{tabular}
	\caption{Numerical results for $\kappa_1^*$ in \cref{sec:NumExpBIP_1D}. In all plots, the solid line represents the true conductivity, the dashed line is the posterior mean, and the shaded grey area is a confidence interval. In the first row, results are obtained by approximating the forward map with FEM, and in the second with the RM-FEM.}
	\label{fig:InverseProblem_Inf_Smooth}
\end{figure}

\begin{figure}[th]
	\centering
	\begin{tikzpicture}
		\draw (0,0) -- (8.7, 0) -- (8.7, 0.5) -- (0, 0.5) -- (0, 0);
		\draw[] (0.1, 0.25) -- (0.6, 0.25) node[right,color=black] {\small Truth $\kappa^*$};
		\draw[style=dashed] (2.3, 0.25) -- (2.8, 0.25) node[right,color=black] {\small Mean $\E_\mu[\kappa]$};
		\draw[fill=gray!50,draw=none] (5.1, 0.2) -- (5.6, 0.2) -- (5.6, 0.3) -- (5.1, 0.3) -- (5.1, 0.2); 
		\node[right] at (5.6, 0.25){\small Confidence Interval} ;
	\end{tikzpicture}
	
	\vspace{0.5cm}
	\begin{tabular}{ccc}
		\includegraphics[]{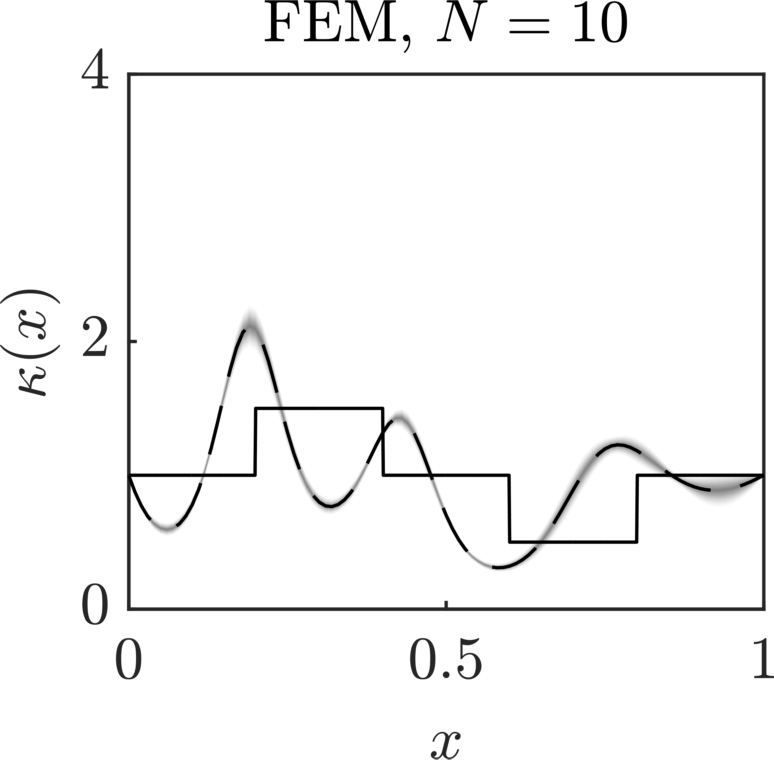} & \includegraphics[]{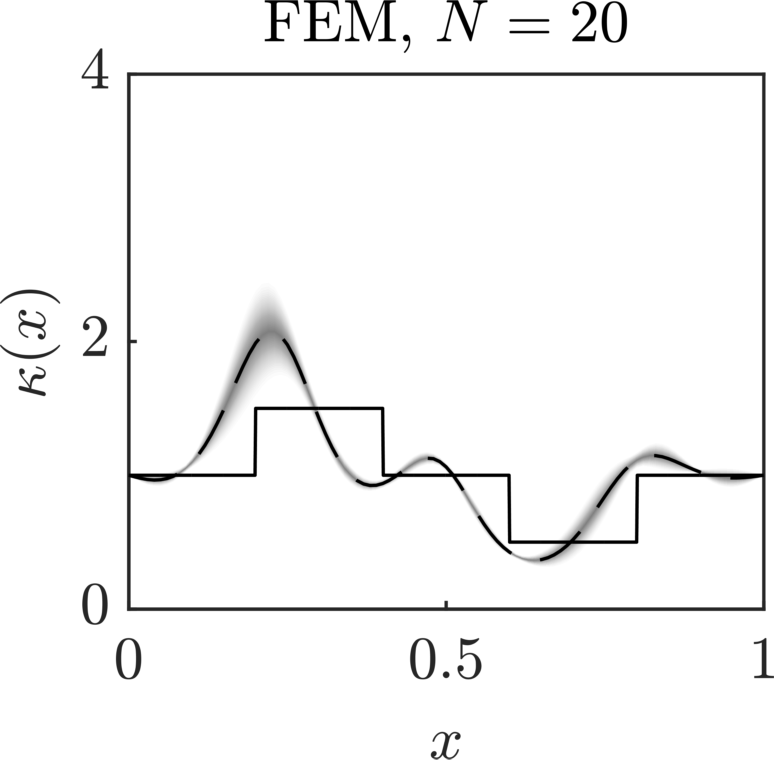} & \includegraphics[]{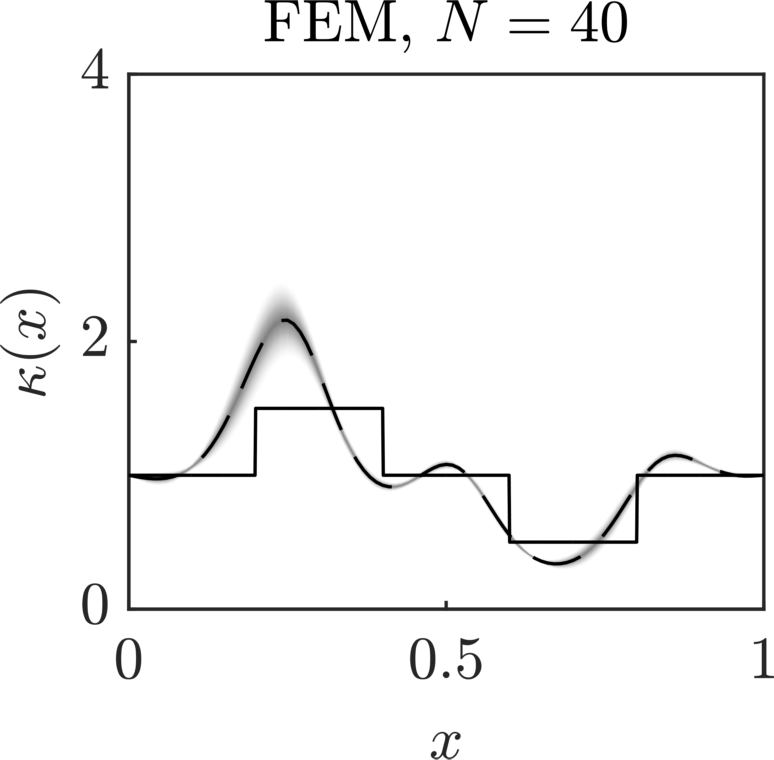} \vspace{0.1cm} \\
		\includegraphics[]{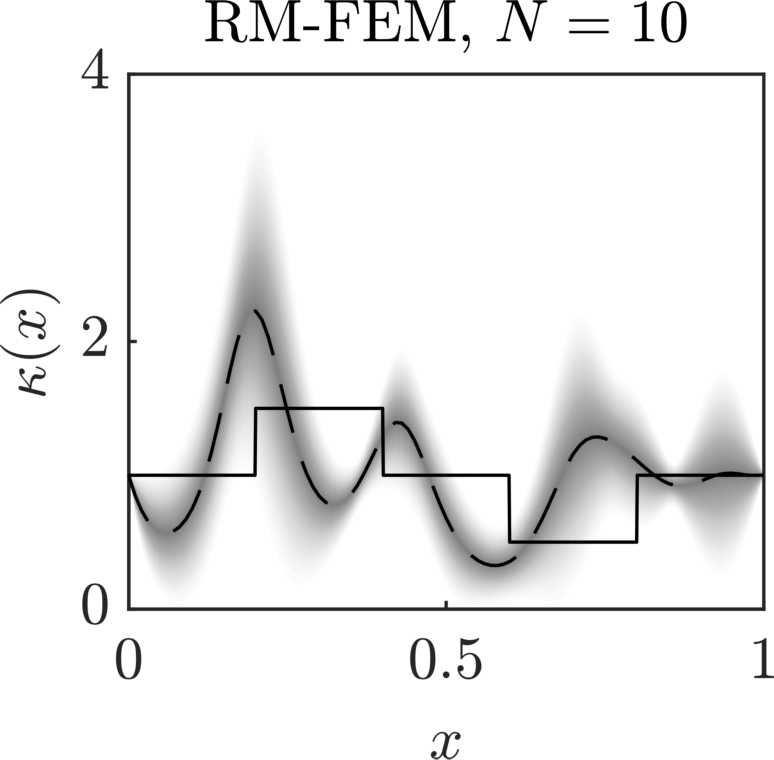} & \includegraphics[]{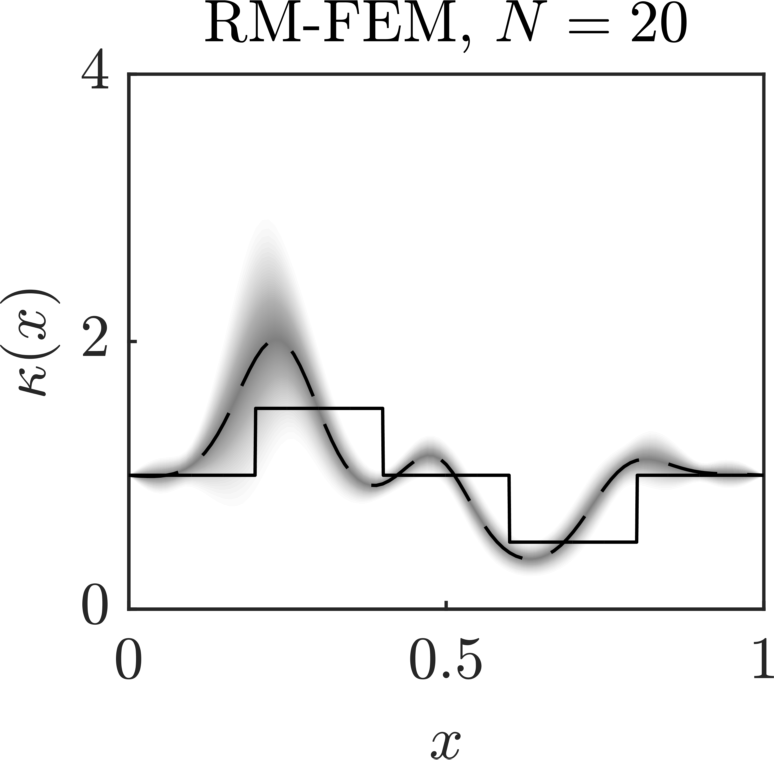} & \includegraphics[]{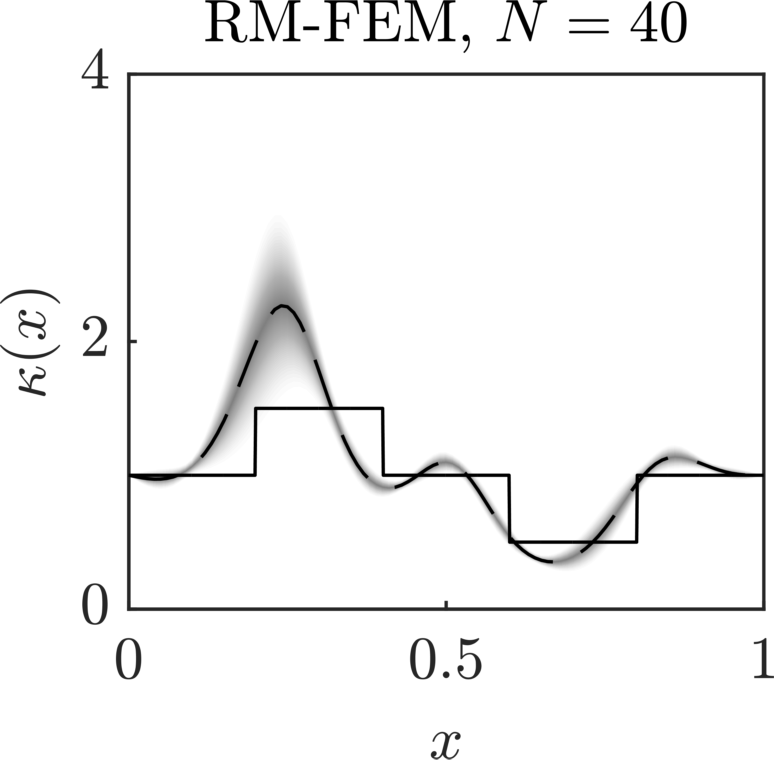}
	\end{tabular}
	\caption{Numerical results for $\kappa_2^*$ in \cref{sec:NumExpBIP_1D} In all plots, the solid line represents the true conductivity, the dashed line is the posterior mean, and the shaded grey area is a confidence interval. In the first row, results are obtained by approximating the forward map with FEM, and in the second with the RM-FEM.}
	\label{fig:InverseProblem_Inf_Disc}
\end{figure}

We first consider $D = (0,1)$ and solve the inverse problem presented above for two different true diffusion fields $\kappa^*$. In both cases, we consider the prior on $\mathcal H$ to be given by $\mathcal N(0, \Gamma_0)$, with $\Gamma_0^{-1} = -\d^2/\d x^2$ with homogeneous boundary conditions, so that the Bayesian inverse problem is well-posed. First, we consider $\kappa_1^* = \exp(\theta_1^*)$, where the log-conductivity $\theta_1^* \in \mathcal H$ is given by
\begin{equation}
	\theta_1^*(x) = \sum_{j = 1}^{4} \xi_j \sqrt{\lambda_j} \phi_j(x),
\end{equation}
with $\xi_1 = \xi_2 = 1$, $\xi_3 = \xi_4 = 1/4$, and where $\{(\lambda_i, \phi_i)\}_{i=1}^4$ are the first four ordered eigenpairs of $\Gamma_0$. Second, we consider $\theta_2^* \in X \cap \mathcal H^C$, so that the true conductivity does not belong to the domain in which we solve the inverse problem, but it is still admissible for \eqref{eq:PDE_Inverse} to be well-posed. In particular, we consider the discontinuous conductivity
\begin{equation}
	\kappa_2^*(x) = \left\{
	\begin{alignedat}{2}
		&1.5, &&\quad 0.2 < x < 0.6, \\
		&0.5, &&\quad 0.6 < x < 0.8,  \\
		&1,   &&\quad \text{otherwise},
	\end{alignedat}
	\right.
\end{equation}
and infer $\theta_2^* = \log(\kappa_2^*)$. For both problems, we choose the right-hand side in \eqref{eq:PDE_Inverse} as $f(x) = \sin(2\pi x)$. Synthetic observations are obtained as point evaluations of a reference solution on points $x_i^* = i/10$, for $i = 1, \ldots, 9$, corrupted by Gaussian noise $\mathcal N(0, 10^{-8} I)$. The forward map is approximated with FEM and RM-FEM. The mesh $\mathcal T_h$ for the FEM is equally spaced, and we vary the number of elements $N = \{10, 20, 40\}$. For the RM-FEM, we consider $p = 1$ in \eqref{eq:PerturbedPoints} as per \cref{thm:APriori} and implement the random perturbations with an uniform distribution as in \cref{ex:ExRandomPerturbation}. 

We sample with the MH algorithm from the posterior distributions $\mu_h$ and $\widetilde \mu_h$, with $N_{\mathrm{MC}} = 2 \cdot 10^5$ for $\mu_h$ and with $\widetilde N_{\mathrm{MC}}^{\mathrm{out}} = 50$ and $\widetilde N_{\mathrm{MC}}^{\mathrm{in}} = 2 \cdot 10^5$ for $\widetilde \mu_h$. Knowing for the first conductivity $\kappa^*_1$ that the true conductivity is fully determined by four coefficients, we fix the truncation index $N_{\mathrm{KL}} = 4$ in the Karhunen--Loève expansion \eqref{eq:KarunhenLoeve}. For the second conductivity $\kappa^*_2$, we fix $N_{\mathrm{KL}} = 9$. We then approximate the mean and pointwise standard deviation with \eqref{eq:PostMean} and \eqref{eq:PostMeanProb} for the deterministic and probabilistic posteriors, respectively. Moreover, we arbitrarily fix a pointwise confidence interval at twice the standard deviation away from the mean. Numerical results are given in \cref{fig:InverseProblem_Inf_Smooth} and \cref{fig:InverseProblem_Inf_Disc}. Results highlight that for a coarse approximation, specifically for $N = 10$, the posterior distribution $\mu_h$ is overly confident on the result. Indeed, the posterior mean fails to capture precisely the true conductivity in both the continuous and discontinuous case, and the confidence interval is extremely sharply concentrated around the mean. Conversely, the distribution $\widetilde \mu_h$ based on the probabilistic forward model accounts better for the uncertainty due to numerical discretization. Increasing the number of elements $N$, the mean computed under $\mu_h$ and $\widetilde \mu_h$ tends to approximate better the true conductivity field. In particular, for $N = 40$ the posteriors $\mu_h$ and $\widetilde \mu_h$ are already practically undistinguishable and are close to the true field. Moreover, let us remark that while the width of the confidence interval seems independent of $N$ for $\mu_h$, it shrinks coherently to the discretization for $\widetilde \mu_h$. Finally, we note that for $\kappa^*_2$ even for larger values of $N$ the posterior $\widetilde \mu_h$ seems to capture with its uncertainty local errors in the solution of the inverse problem. Indeed, the posterior mean is particularly off the true field on the left side of the domain, where the confidence interval is wider with respect to areas where the solution is more accurate. 

\subsubsection{Two-Dimensional Case}\label{sec:NumExpBIP_2D}

\begin{figure}[t]
	\centering
	\begin{tabular}{cccc}
		\includegraphics[]{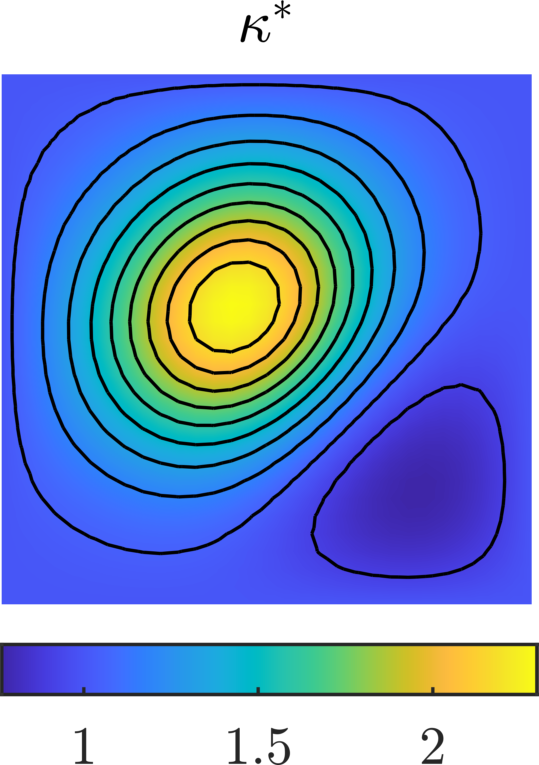} & \includegraphics[]{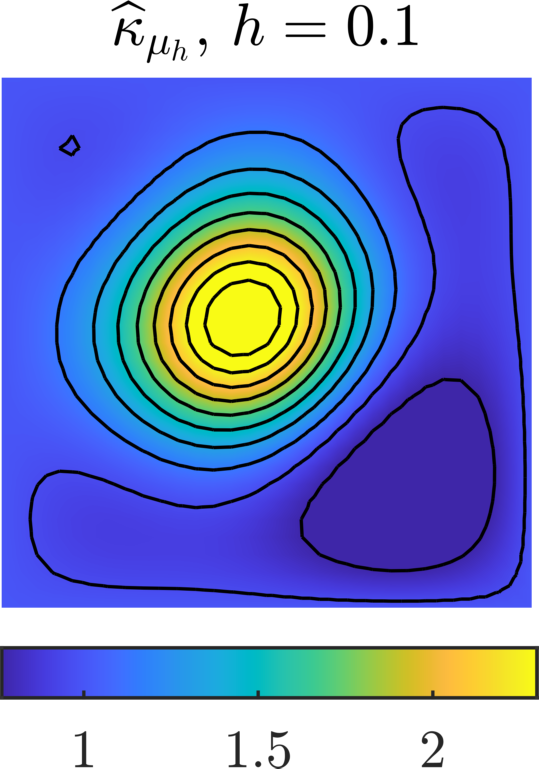} & \includegraphics[]{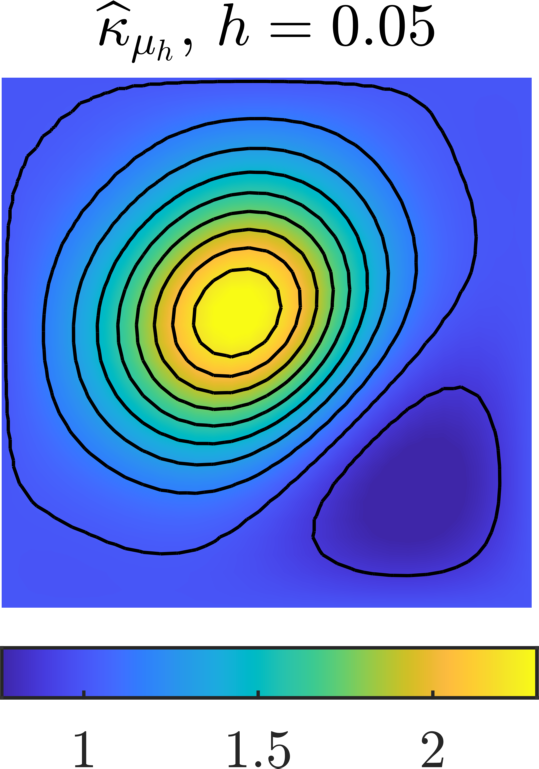} & \includegraphics[]{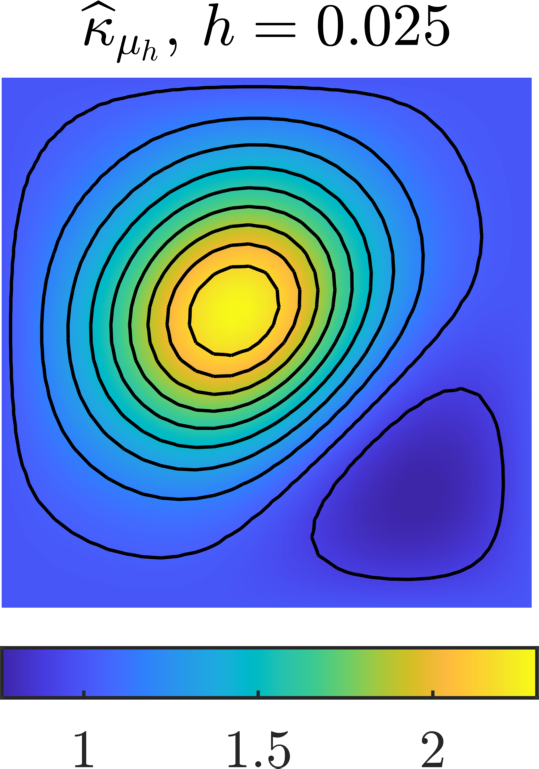}
	\end{tabular}
	
	\vspace{0.5cm}
	\begin{tabular}{cc}	
		\includegraphics[]{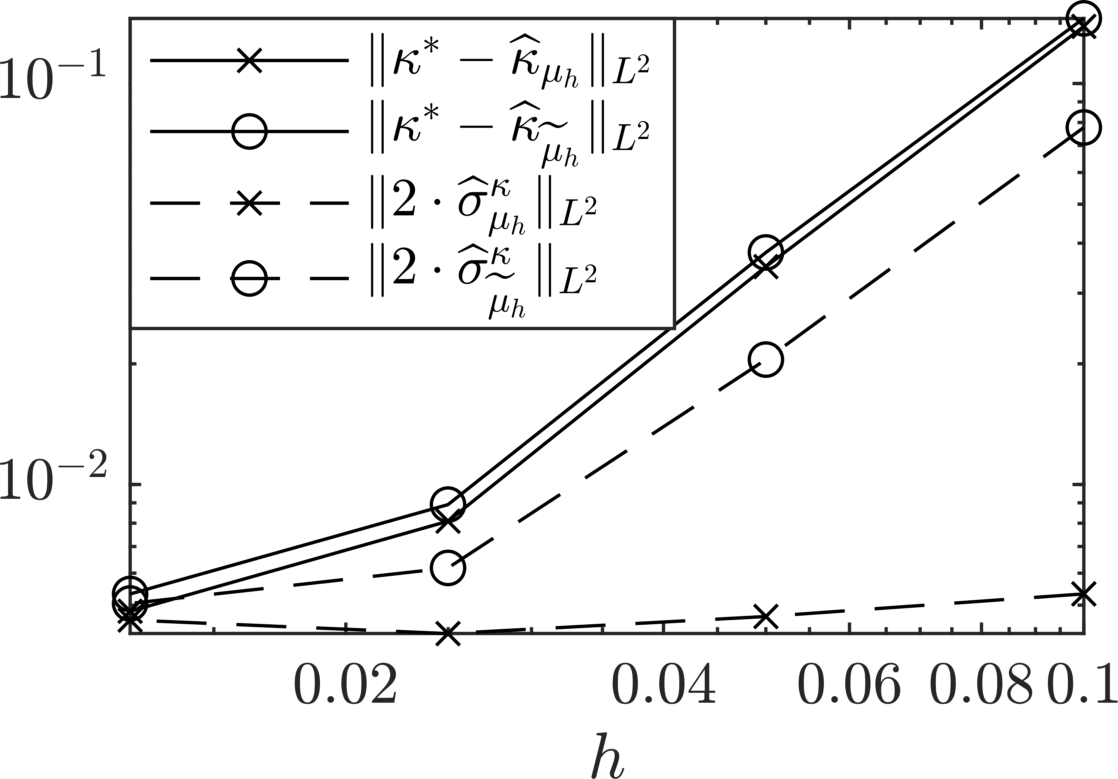} & 	\includegraphics[]{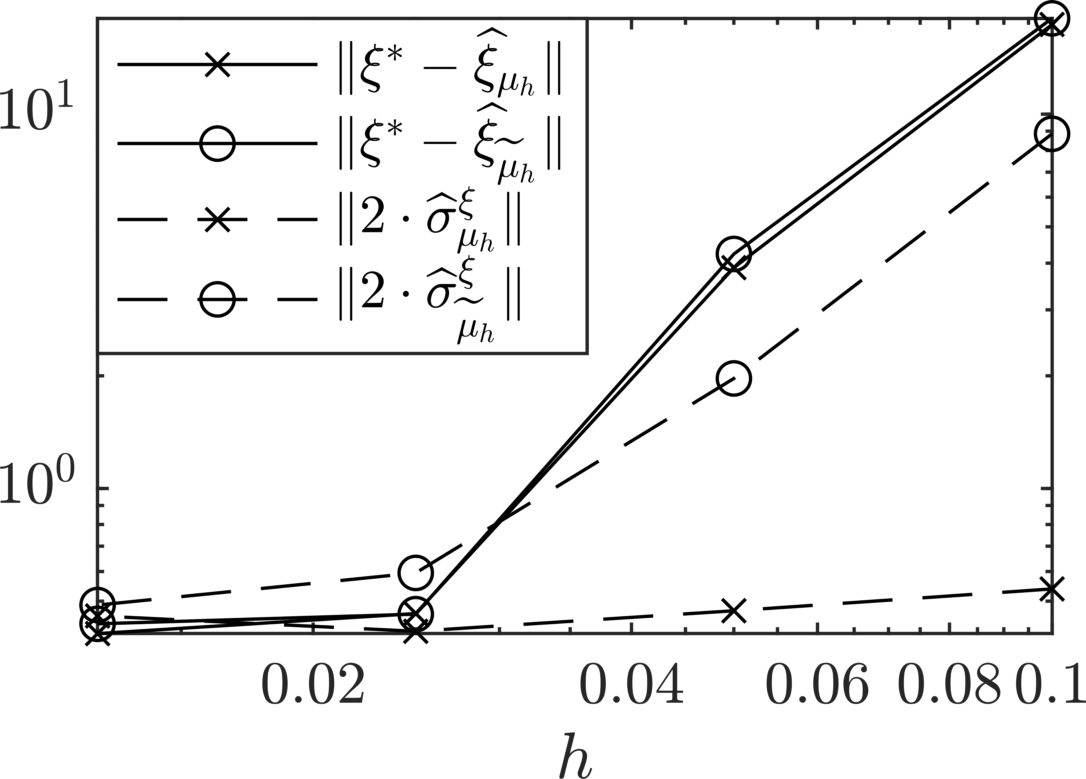}
	\end{tabular}
	
	\caption{Numerical results for \cref{sec:NumExpBIP_2D}. First row: True conductivity field $\kappa^*$ and posterior mean field $\widehat \kappa_{\mu_h}$ estimated with MCMC and different values of $h$. Second row: Mean error vs standard deviation under $\mu_h$ and $\widetilde \mu_h$. On the left, $L^2(D)$ error on the mean field vs $L^2(D)$ norm of the punctual standard deviation under $\mu_h$ and $\widetilde \mu_h$. On the right, error with respect to exact KL coefficients, and standard deviations under $\mu_h$ and $\widetilde \mu_h$.}
	\label{fig:Bay2d}
\end{figure}

We consider now a two dimensional example on the domain $D = (0,1)^2$. We fix a Gaussian prior $\mu_0$ on $\mathcal H$ for the log-conductivity $\theta$ chosen as $\mu_0 = \mathcal N(0, \Gamma_0)$, where $\Gamma_0 = -\Delta^{-1.3}$ with homogeneous boundary conditions, so that the inverse problem is well-posed. We fix $N_{\mathrm{KL}} = 6$ and let the true conductivity $\kappa^* = \exp(\theta^*)$ in \eqref{eq:PDE_Inverse} be given by
\begin{equation}
	\theta^* = \sum_{i=1}^{6} \sqrt{\lambda_i} \phi_i \xi_i^*,
\end{equation}
where $\{(\lambda_i, \phi_i)\}_{i=1}^6$ are the first six ordered eigenpairs of $\Gamma_0$, and where $\xi_i^* = (-1)^{i+1}\cdot 10$ for $i = 1, 2, \ldots, 6$. Let us remark that $\theta^* \in \mathcal H$. The right-hand side in \eqref{eq:PDE_Inverse} is chosen as $f(x, y) = 8\pi^2 \sin(2\pi x) \sin(2\pi y)$. Synthetic observations are obtained by evaluating a reference solution on $m = 50$ random locations sampled from $\mathcal U(D)$ and then corrupted by an observational noise distributed as $\mathcal N(0, 10^{-6}I)$.  We then approximate the forward map in the inverse problem with the FEM and the RM-FEM. We choose a structured mesh $\mathcal T_h$ as the one in \cref{ex:ExRandomPerturbation} (or the second row of \cref{fig:Mesh}). In particular, in this case we let $h$ denote the constant length of the short side of the triangular elements, i.e., the inverse of the number of subdivisions of each side of $D$. In particular, we consider $h = 0.1 \cdot 2^{-i}$, $i = 0,1,\ldots, 3$. The RM-FEM is implemented with $p = 1$ in \eqref{eq:PerturbedPoints} as per \cref{thm:APriori}, and with an uniform choice for the random perturbations as the one described in \cref{ex:ExRandomPerturbation}. 

Employing the notation introduced in \cref{sec:BIP_Practice}, we then sample from the posterior distributions $\mu_h$ and $\widetilde \mu_h$ employing the RAM method, considering only the first $N_{\mathrm{KL}} = 6$ coefficients in the KL expansion \eqref{eq:KarunhenLoeve}. In particular, we consider $N_{\mathrm{MC}} = 10^5$ samples for the deterministic case, and for the probabilistic case we generate $\widetilde N_{\mathrm{MC}}^{\mathrm{in}} = 10^5$ samples for $\widetilde N_{\mathrm{MC}}^{\mathrm{out}} = 24$ parallel chains, each corresponding to a realization of the random mesh in the RM-FEM. We then compute for each value of $h$ the mean and standard deviation of the field computed under $\mu_h$ (resp. $\widetilde \mu_h$) and denote their Monte Carlo approximations as $\widehat \kappa_{\mu_h}$ and $\widehat \sigma^\kappa_{\mu_h}$ (resp. $\widehat \kappa_{\widetilde \mu_h}$, $\widehat \sigma^\kappa_{\widetilde \mu_h}$). Moreover, we consider the statistics of the $6$-dimensional coefficient $\sigma$ of the KL expansion, and denote by $\widehat \xi_{\mu_h}$ and $\widehat \sigma_{\mu_h}^\xi$ the Monte Carlo approximation of mean and standard deviation computed under $\mu_h$ (resp. $\widehat \xi_{\mu_h}$, $\widehat \sigma_{\widetilde \mu_h}^\xi$). We show in \cref{fig:Bay2d} the posterior mean $\widehat \kappa_{\mu_h}$ for three values of $h$, compared to the truth $\kappa^*$, and remark that the mean approximation is sensibly better for smaller values of $h$. The mean value under the probabilistic posterior $\widetilde \mu_h$ is not shown, as it is essentially equal to the deterministic case. The beneficial effect of employing the RM-FEM-based posterior distribution $\widetilde \mu_h$, with respect to the FEM-based posterior $\mu_h$, consists of the approximate equalities
\begin{equation}
	\norm{\widehat\sigma_{\widetilde \mu_h}^\kappa}_{L^2(D)} = \mathcal O\left(\norm{\kappa^* - \widehat \kappa_{\widetilde \mu_h}}_{L^2(D)}\right), \qquad \norm{\widehat \sigma^\xi_{\widetilde \mu_h}} = \mathcal O\left(\norm{\xi^* - \widehat \xi_{\widetilde \mu_h}}\right),
\end{equation}
which indicate that the error on the conductivity field, or on the coefficients of its KL expansion, are well represented by the uncertainty in the posterior distribution. This is shown in \cref{fig:Bay2d}, where we notice that under $\mu_h$ the standard deviation is practically independent of $h$ and small with respect to the error on the solution of the inverse problem. Conversely, under $\widetilde \mu_h$ we have that the posterior standard deviation converges accordingly to the error, both for the $L^2$-norm of the error on the mean and for the coefficients of the KL expansion.

\section{Error Analysis for the RM-FEM}\label{sec:ProbErrEst}

In this section, we present our a priori and our a posteriori error analysis for the RM-FEM. Let us remark that while the a priori error analysis is carried on for a general space dimension $d$ and the adaptive algorithm has been shown to be efficient in higher dimensions (see \cref{sec:NumExp_ErrEst}), we present a rigorous a posteriori error analysis only in case $d = 1$. Conversely, in the a priori analysis we fix the coefficient $p = 1$ in \eqref{eq:PerturbedPoints}, whereas in the a posteriori analysis we consider general perturbations, i.e., general coefficients $p \geq 1$ in the same equality.

\subsection{A Priori Error Estimates}\label{sec:APriori}

We first prove the a priori error estimate given in \cref{thm:APriori}. The convergence properties of the FEM for the elliptic problem \eqref{eq:PDE} are well-established. In particular, without any additional assumptions on the exact solution, i.e., when $u \in V$, it holds $\norm{u-u_h}_V \to 0$ for $h \to 0$. Under the more restrictive assumption $u \in H^2(D) \cap V$, we have a linear convergence rate, i.e. 
\begin{equation}\label{eq:FEMAPriori}
	\norm{u-u_h}_V \leq Ch \abs{u}_{H^2(D)},
\end{equation}
for a constant $C > 0$, which is independent of $h$ and $u$ \cite{Cia02, Qua09, BrS08}. It is desirable that the RM-FEM is endowed with the same property. Moreover, we wish the error due to randomization to be balanced with the error due to the FEM discretization, which is shown in the proof of \cref{thm:APriori} below.
\begin{proof}[Proof of \cref{thm:APriori}] Since \eqref{eq:FEMAPriori} holds independently of the mesh, we have
	\begin{equation}
		\norm{u - \widetilde u_h}_V \leq \widetilde Ch\abs{u}_{H^2(D)}, \quad \text{a.s.},
	\end{equation}
	for a constant $\widetilde C$ independent of $h$ and $u$ and of the coefficient $p$ in \eqref{eq:PerturbedPoints}. Hence, by the triangle inequality we have for $p = 1$ 
	\begin{equation}\label{eq:APrioriDecomp}
		\norm{u_h - \widetilde u_h}_V \leq \norm{u - u_h}_V + \norm{u - \widetilde u_h}_V \leq (C+\widetilde C)h\abs{u}_{H^2(D)}, \quad \text{a.s.},
	\end{equation}
	i.e., we have $\mathcal O(\norm{u_h - \widetilde u_h}_V) = \mathcal O(\norm{u - u_h}_V)$, which shows the desired result.
\end{proof}	

Let us remark that we have shown above that the probabilistic solution converges with the same rate with respect to $h$ in case $p=1$, but we have not considered the case $p>1$, for which the probabilistic term may be of higher order. Indeed, a preliminary theoretical and numerical investigation leads us to conjecture that
\begin{equation}\label{eq:APrioriGenP}
	\left(\E\norm{u_h - \widetilde u_h}_V^2\right)^{1/2} \leq Ch^{(p+1)/2},
\end{equation}
so that, at least in the mean-square sense, the error due to randomization should converge faster than the error due to discretization if $p > 1$.

\subsection{A Posteriori Error Analysis in the One-Dimensional Case}\label{sec:ErrAnalysis}

In this section we prove our main result for the a posteriori error estimator of the RM-FEM given in \cref{def:ProbErrEst}, namely \cref{thm:MainThmAPosteriori}. Our goal is to prove in the one-dimensional case that the probabilistic a posteriori error estimators are reliable and efficient, i.e., that there exist positive constants $\widetilde C_{\mathrm{low}}$ and $\widetilde C_{\mathrm{up}}$ independent of $h$ and $u$ such that
\begin{equation}\label{eq:APosterioriBound_Prob}
	\widetilde C_{\mathrm{low}}\widetilde{\mathcal E}_{h,k} \leq \norm{u-u_h}_V \leq \widetilde C_{\mathrm{low}}\widetilde{\mathcal E}_{h,k}.
\end{equation}
for $k = \{1,2\}$. Consider the elliptic two-point boundary value problem
\begin{equation}\label{eq:PDE_1d}
	\begin{aligned}
		&-(\kappa u')' = f, \quad \text{in } D, \\
		&u(0) = u(1) = 0,
	\end{aligned}
\end{equation}
where $\kappa \in L^\infty(D)$ satisfies $\kappa(x) \geq \underline{\kappa}$ almost everywhere in $D$, and where we assume $f \in L^1(D)$. We recall that the notation for one-dimensional problems has been introduced and discussed in \cref{ex:ExRandomPerturbation} and at the end of \cref{sec:APosteriori}. Additionally, we introduce here for a function $w$ which is piecewise constant on each $K_i \in \mathcal T_h$ the jump operator 
\begin{equation}
	\dbrack{w}_{x_i} \defeq w\eval{K_i} - w\eval{K_{i+1}}, \qquad i=1, \ldots, N-1, \quad \dbrack{w}_{x_0} = \dbrack{w}_{x_N} = 0.
\end{equation}
Our strategy for proving that the error estimator introduced in \cref{def:ProbErrEst} satisfies \eqref{eq:APosterioriBound_Prob} relies on showing it is equivalent to known valid estimators. In particular, we consider the following estimator, defined in \cite[Definition 6.3]{BaR81}.

\begin{definition}\label{def:DetErrEst} Let $\kappa$ be the diffusion coefficient of \eqref{eq:PDE_1d} satisfy $\kappa \in \mathcal C^0(D)$ and $\kappa \geq \underline \kappa > 0$. We define the error estimator 
	\begin{equation}\label{eq:DetErrEst}
		\mathcal E_h^2 \defeq \sum_{j=1}^{N} \eta_j^2, \quad \eta_j \defeq \norm{\kappa^{-1}\ell_j}_{L^2(K_j)},
	\end{equation} 
	with $\ell_j \colon K_j \to \R$ the linear function defined by $\ell_j(x_{j-1}) = \tau_{j,1}$,  $\ell_j(x_j) = -\tau_{j,0}$ where
	\begin{equation}
		\tau_{j,k} = \frac{h_j}{h_{j-k+1} + h_{j-k}}\dbrack{u_h'}_{x_{j-k}}\kappa(x_{j-k}).
	\end{equation}
\end{definition}
Clearly, the quantity $\mathcal E_h$ is computable up to quadrature error due to the approximation of the local estimators $\eta_j$. Let us finally introduce more precisely the higher-order quantity $\Lambda$ appearing in \eqref{eq:BabuskaLambdaIntro}, i.e.,
\begin{equation}\label{eq:BabuskaLambda}
	\Lambda^2 \defeq h^\zeta \sum_{j=1}^N \int_{K_j} (f(x) + \ell_j'(x))^2 \dd x,
\end{equation}
where $\zeta \in (0, 1)$ is arbitrary and $\ell_j$ are the linear functions employed in \cref{def:DetErrEst}. We can now state the main result concerning the estimator $\mathcal E_h$, which summarizes \cite[Theorems 8.1 and 8.2]{BaR81}.

\begin{theorem}\label{thm:BabuskaThm} Let $\mathcal E_h$ and $\Lambda$ be defined in \cref{def:DetErrEst} and \eqref{eq:BabuskaLambda}, respectively. Then, it holds up to higher order terms in $h$
	\begin{equation}
		\norm{u-u_h}_V \leq C\left(\mathcal E_h^2 + \Lambda^2\right)^{1/2},
	\end{equation}
	for a constant $C$ independent of $h$ and of the solution $u$. If moreover the family of meshes $\mathcal T_h$ is $\lambda$-quasi-uniform and if $\kappa \in \mathcal C^2(D)$ and $f \in \mathcal C^1(D)$ then, up to higher order terms, it holds
	\begin{equation}
		C_{\mathrm{low}} \mathcal E_h \leq \norm{u-u_h}_V \leq C_{\mathrm{up}} \mathcal E_h,
	\end{equation}	
	for constants $C_{\mathrm{low}}$, $C_{\mathrm{up}}$ independent of $h$ and $u$.
\end{theorem}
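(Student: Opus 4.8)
The statement is classical---it restates \cite[Theorems 8.1 and 8.2]{BaR81}---so the plan is to reconstruct the residual-based argument in the one-dimensional setting, where the structure is particularly transparent. Write $e \defeq u - u_h$ and recall Galerkin orthogonality $a(e, v_h) = 0$ for all $v_h \in V_h$, together with the energy identity $a(e,e) = R(e)$, where $R(v) \defeq F(v) - a(u_h, v)$ is the residual functional; by coercivity and continuity this controls $\norm{e}_V$ since $m\norm{e}_V^2 \leq a(e,e) \leq M\norm{e}_V^2$. The starting observation, special to $d=1$, is that the flux $\sigma \defeq \kappa u'$ is continuous (as $-\sigma' = f \in L^1(D)$) whereas the discrete flux $\kappa u_h'$ jumps at the interior nodes, so that $a(e,e) = \sum_{j} \int_{K_j} \kappa^{-1}(\sigma - \kappa u_h')^2 \dd x$. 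The entire analysis then reduces to showing that the computable linear function $\ell_j$ of \cref{def:DetErrEst} reconstructs the flux error $\sigma - \kappa u_h'$ on each $K_j$ up to a higher-order defect.

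For the reliability bound (the first inequality), I would use Galerkin orthogonality to write $a(e,e) = R(e - v_h)$ for an arbitrary $v_h \in V_h$, chosen as the nodal interpolant of $e$, and then integrate by parts elementwise. This decomposes $R(e-v_h)$ into interior element residuals, governed once the reconstruction is inserted by the quantities $f + \ell_j'$, and nodal contributions, governed by the weighted jumps $\tau_{j,k}$ that define the boundary values of $\ell_j$. The point specific to the construction in \cref{def:DetErrEst} is that the nodal terms reassemble exactly into $\sum_j \eta_j^2 = \mathcal E_h^2$, while the interior terms, combined with the interpolation estimate $\norm{e - v_h}_{L^2(K_j)} \leq C h_j \norm{e'}_{L^2(K_j)}$, are controlled by $\Lambda$ and $\norm{e}_V$. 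A Cauchy--Schwarz inequality over the elements followed by absorption of $\norm{e}_V$ then yields $\norm{u-u_h}_V \leq C(\mathcal E_h^2 + \Lambda^2)^{1/2}$ up to the higher-order remainder hidden in the interpolation term.

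For the two-sided equivalence I would invoke the additional regularity $\kappa \in \mathcal C^2(D)$, $f \in \mathcal C^1(D)$ together with $\lambda$-quasi-uniformity. The lower bound (efficiency) follows from the localized bubble-function technique: testing $R$ against a function supported on the patch $\omega_j$ of elements adjacent to $K_j$ bounds $\eta_j$ by the local energy $\norm{e'}_{L^2(\omega_j)}$ of the error plus a local data-oscillation term; summing over $j$ and using the finite overlap of patches bounds $\mathcal E_h$ by $\norm{e}_V$ plus oscillation. Under the stated smoothness the element residual $f + \ell_j'$, and hence $\Lambda$, become genuinely of higher order in $h$, so the oscillation terms can be absorbed; this upgrades the reliability estimate to the sharper upper bound and matches it with the lower bound, giving $C_{\mathrm{low}} \mathcal E_h \leq \norm{u-u_h}_V \leq C_{\mathrm{up}} \mathcal E_h$ up to higher-order terms. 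Quasi-uniformity is used throughout to keep the constants in the local trace, inverse, and interpolation estimates uniform, and to control the mesh-ratio weights $h_j/(h_{j-k+1}+h_{j-k})$ appearing in $\tau_{j,k}$.

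The main obstacle is precisely this sharp localization: proving that $\kappa^{-1}\ell_j$ approximates $e'$ on $K_j$ with a remainder that is genuinely higher order, rather than merely bounding the residual generically. This requires exploiting the exact one-dimensional relation between the nodal jumps $\dbrack{u_h'}_{x_i}$ and the local error---using the continuity of $\sigma$ against the discontinuity of $\kappa u_h'$---and verifying that the defect of the reconstruction is captured precisely by the data-oscillation term $\Lambda$ with its weight $h^\zeta$, $\zeta \in (0,1)$. Once this localization is in place, both inequalities follow from standard summation and absorption arguments, and the remaining steps are the elementary bookkeeping carried out in \cite{BaR81}.
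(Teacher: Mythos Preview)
The paper does not prove this theorem at all: it introduces it with the sentence ``We can now state the main result concerning the estimator $\mathcal E_h$, which summarizes \cite[Theorems 8.1 and 8.2]{BaR81}'' and then uses it as a black box in the proof of \cref{thm:MainThmAPosteriori}. Your proposal correctly identifies this, and your reconstruction of the Babu\v{s}ka--Rheinboldt argument---flux reconstruction via $\ell_j$, residual localization, interpolation and bubble-function bounds---is a reasonable outline of how the cited proofs proceed, so there is no discrepancy to flag.
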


We recall that in the one dimensional case the probabliistic error estimators for the RM-FEM are given by
\begin{equation}
\begin{alignedat}{2}
	&\widetilde{\mathcal E}_{h,1} \defeq \left(\sum_{i=1}^N \widetilde{\eta}_{K_i,1}^2\right)^{1/2}, \quad &&\text{ with } \quad \widetilde{\eta}_{K_i,1}^2 = h_i^{-(p-1)} \Eb{\norm{u_h' - (\widetilde{\mathcal I} u_h)'}_{L^2(\widetilde K_i)}^2}.
	\\
	&\widetilde{\mathcal E}_{h,2} \defeq \left(\sum_{i=1}^N \widetilde{\eta}_{K_i,2}^2\right)^{1/2}, \quad &&\text{ with } \quad \widetilde{\eta}_{K_i,2}^2 = h_i^{-(2p-3)} \Eb{\norm{u_h'^\eval{K} -  (\widetilde{\mathcal I} u_h)'\eval{\widetilde K}}^2}.
\end{alignedat}
\end{equation}

Our strategy to prove \cref{thm:MainThmAPosteriori} relies on showing that the deterministic estimator $\mathcal E_h$ of \cref{def:DetErrEst}, as well as its probabilistic counterparts $\widetilde{\mathcal E}_{h,1}$ and $\widetilde{\mathcal E}_{h,2}$ above are all equivalent to the quantity
\begin{equation}
	\mathcal J(u_h) \defeq \sum_{i=1}^{N-1} \bar h_i \dbrack{u_h'}^2,
\end{equation}
i.e., the sum of all squared jumps of the derivatives on the internal nodes. We first prove the equivalence for $\widetilde {\mathcal E}_{h,1}$. 

\begin{lemma}\label{lem:EquivProb1Jump} Let \cref{as:meshPerturbation} hold. Then, if the mesh is $\lambda$-quasi-uniform it holds
	\begin{equation}
	\left(\frac{\E\abs{\bar\alpha_1}\left(1+\lambda^{-(p-1)}\right)}{2} - 2h^{p-1}\E\abs{\bar\alpha_1}^2\right) \mathcal J^2(u_h)\leq \widetilde{\mathcal E}_{h,1}^2 \leq \frac{\E\abs{\bar\alpha_1}\left(1+\lambda^{p-1}\right)}{2} \mathcal J^2(u_h),
	\end{equation}
	where $\widetilde{\mathcal E}_{h,1}$ is given in \cref{def:ProbErrEst}.
\end{lemma}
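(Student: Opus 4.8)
The plan is to compute the quantity $\widetilde{\eta}_{K_i,1}^2$ explicitly by exploiting the one-dimensional structure. On the original mesh, $u_h'$ is piecewise constant, equal to some value $g_i$ on $K_i = (x_{i-1}, x_i)$. After perturbing the interior node $x_i$ to $\widetilde x_i = x_i + h^p\alpha_i$ (where in 1D I write $\alpha_i = (h^{-1}\bar h_i)^p\bar\alpha_i$, so the displacement is $\bar h_i^p\bar\alpha_i/h^{0}$... more precisely the node moves by $\delta_i := h^p\alpha_i = \bar h_i^p \bar\alpha_i$), the interpolant $\widetilde{\mathcal I}u_h$ still interpolates $u_h$ at the original nodal \emph{values} $u_h(x_j)$ but on the shifted nodes. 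Hence on the perturbed element $\widetilde K_i$ the slope of $\widetilde{\mathcal I}u_h$ is the difference quotient $(u_h(x_i) - u_h(x_{i-1}))/(\widetilde x_i - \widetilde x_{i-1})$, and the difference $u_h' - (\widetilde{\mathcal I}u_h)'$ on the relevant sub-interval is a linear combination of the two adjacent jumps $\dbrack{u_h'}_{x_{i-1}}$ and $\dbrack{u_h'}_{x_i}$, weighted by the perturbation sizes $\delta_{i-1}, \delta_i$ and the mesh sizes $h_{i-1}, h_i, h_{i+1}$. I would first carry out this elementary geometry to get, for each overlap region, an expression of the form $(\text{slope difference}) = (\text{coefficient depending on } \delta \text{ and } h) \times \dbrack{u_h'}$, then integrate its square over $\widetilde K_i$ and take the expectation over $\bar\alpha$.

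Next I would expand $\Eb{\norm{u_h' - (\widetilde{\mathcal I}u_h)'}_{L^2(\widetilde K_i)}^2}$. Because the displacements $\delta_j$ are small relative to the mesh sizes ($\delta_j = O(h^{p})$ while $h_j = \Theta(h)$, and $p > 1$ makes $\delta_j/h_j = O(h^{p-1}) \to 0$), the dominant contribution to the expectation is linear in $\E\abs{\bar\alpha_1}$: the length of the small overlap sliver is proportional to $\abs{\delta}$, while the integrand there is $O(1)$, so each term scales like $\bar h_i \E\abs{\delta_i}\,\dbrack{u_h'}^2 / h^p$ — exactly producing, after the prefactor $h_i^{-(p-1)}$, a contribution $\sim \bar h_i \E\abs{\bar\alpha_1}\dbrack{u_h'}^2$. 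The quadratic-in-$\bar\alpha$ corrections are of order $h^{p-1}\E\abs{\bar\alpha_1}^2$ relative to this, which is where the subtracted term $2h^{p-1}\E\abs{\bar\alpha_1}^2$ and the $\lambda$-quasi-uniformity factors $\lambda^{\pm(p-1)}$ in the statement come from: the ratios $\bar h_i/\bar h_{i\pm1}$ and $h_i/h_{i+1}$ are controlled between $\lambda^{-1}$ and $\lambda$, and summing the local contributions and re-indexing so each jump $\dbrack{u_h'}_{x_i}$ is collected from its two neighboring elements yields the bounds $\tfrac12\E\abs{\bar\alpha_1}(1+\lambda^{-(p-1)})$ from below and $\tfrac12\E\abs{\bar\alpha_1}(1+\lambda^{p-1})$ from above, times $\mathcal J^2(u_h) = \sum_i \bar h_i \dbrack{u_h'}^2$.

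The main obstacle will be bookkeeping the overlap geometry cleanly: each perturbed element $\widetilde K_i$ straddles parts of up to three original elements (depending on the signs of $\delta_{i-1}$ and $\delta_i$), so $u_h' - (\widetilde{\mathcal I}u_h)'$ is piecewise defined on several sub-intervals whose lengths are themselves random, and one must integrate the square carefully over each piece and then take expectations. I expect the cleanest route is to change variables so that the perturbed element is written relative to the original one, expand in powers of $\delta/h$, and keep track of the leading linear term and bound the remainder. A secondary subtlety is the boundary elements $K_1$ and $K_N$, where one of the two node-perturbations is absent (boundary vertices are fixed); these are handled directly and contribute only the single adjacent jump, consistent with the convention $\dbrack{u_h'}_{x_0} = \dbrack{u_h'}_{x_N} = 0$. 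Once the local estimate
\begin{equation}
\widetilde{\eta}_{K_i,1}^2 = c_i\, \E\abs{\bar\alpha_1}\, \bigl(\text{combination of } \bar h_{i-1}\dbrack{u_h'}_{x_{i-1}}^2 \text{ and } \bar h_i \dbrack{u_h'}_{x_i}^2\bigr) + O\!\bigl(h^{p-1}\E\abs{\bar\alpha_1}^2\,\bar h_i\dbrack{u_h'}^2\bigr)
\end{equation}
is in hand with $c_i \in [\tfrac12, \tfrac12\lambda^{p-1}]$ after using quasi-uniformity, summing over $i$ and applying the reindexing gives the claimed two-sided bound.
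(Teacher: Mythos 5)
Your plan follows essentially the same route as the paper's proof: write the slope of $\widetilde{\mathcal I}u_h$ on $\widetilde K_i$ as a difference quotient over the shifted nodes, decompose $\Eb{\norm{e_h'}_{L^2(\widetilde K_i)}^2}$ into the contributions from the overlaps $K_{i-1}\cap\widetilde K_i$, $K_i\cap\widetilde K_i$, $K_{i+1}\cap\widetilde K_i$, condition on the signs of $\alpha_{i-1},\alpha_i$, extract a term linear in $\E\abs{\bar\alpha_1}$ from the slivers (length $\propto h^p\abs{\alpha}$, integrand $\propto\dbrack{u_h'}^2$), and reindex the sum so that each jump collects the factor $h_i^{-(p-1)}+h_{i+1}^{-(p-1)}$, which quasi-uniformity squeezes between $(1+\lambda^{-(p-1)})\bar h_i^{-(p-1)}$ and $(1+\lambda^{p-1})\bar h_i^{-(p-1)}$. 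The symmetry (radial density) part of \cref{as:meshPerturbation} is what lets you replace the one-sided conditional expectations by $\E\abs{\bar\alpha_1}$ with the factor $1/2$; you use it implicitly but should state it.

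There is one genuine gap. You package the quadratic-in-$\bar\alpha$ contribution as a two-sided remainder $O\bigl(h^{p-1}\E\abs{\bar\alpha_1}^2\,\bar h_i\dbrack{u_h'}^2\bigr)$, but the lemma's upper bound contains \emph{no} such term: the correction $-2h^{p-1}\E\abs{\bar\alpha_1}^2$ appears only on the lower side. To obtain this asymmetry you must show that the quadratic part of $I_{i-1,i}+I_i+I_{i+1,i}$ has a definite sign. This follows from an exact cancellation: e.g.\ on the event $\{\alpha_{i-1}>0,\alpha_i>0\}$ one has
\begin{equation}
I_i+I_{i+1,i}=h^p\alpha_i\dbrack{u_h'}_{x_i}^2-\frac{h^{2p}\alpha_i^2}{\widetilde x_i-\widetilde x_{i-1}}\dbrack{u_h'}_{x_i}^2,
\end{equation}
the cubic terms from the expansion of $\bigl(\tfrac{h^p\alpha_i}{\widetilde x_i-\widetilde x_{i-1}}-1\bigr)^2$ cancelling against those hidden in $x_i-\widetilde x_{i-1}=\widetilde x_i-\widetilde x_{i-1}-h^p\alpha_i$; the analogous identities hold in the other sign cases. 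The residual quadratic term is therefore nonpositive, which gives the clean upper bound, and is bounded below by $-2h^{p-1}\E\abs{\bar\alpha_1}^2\mathcal J^2(u_h)$ using $\widetilde x_i-\widetilde x_{i-1}\geq h_i/2$ (or $\geq h_i$ when the element stretches), $(a+b)^2\leq 2(a^2+b^2)$ for the mixed-sign case, and $\bar h_i\leq h$. As written, your argument would only yield an upper bound inflated by an additive $Ch^{p-1}\E\abs{\bar\alpha_1}^2$, which is weaker than the statement; you need to carry out the algebra far enough to see the sign.
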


\begin{proof} Let $\widetilde K_i$, $i = 1, \ldots, N$, be a generic element of the perturbed mesh and let us compute the derivative of the interpolant on $\widetilde K_i$, which is given by
	\begin{equation}
	(\widetilde{\mathcal I} u_h)'\eval{\widetilde K_i} = \frac{u_h(\widetilde x_i) - u_h(\widetilde x_{i-1})}{\widetilde x_i - \widetilde x_{i-1}},
	\end{equation}
	where an exact Taylor expansion allows to compute
	\begin{equation}
	u_h(\widetilde x_{i-1}) = u_h(x_{i-1}) + h^p\alpha_{i-1} u_h'(\widetilde x_{i-1}).
	\end{equation}
	Hence, it holds
	\begin{equation}
	(\widetilde{\mathcal I} u_h)'\eval{\widetilde K_i} = \frac{x_i-x_{i-1}}{\widetilde x_i - \widetilde x_{i-1}} u_h'\eval{K_i} + h^p \frac{\alpha_i u_h'(\widetilde x_i) - \alpha_{i-1} u_h'(\widetilde x_{i-1})}{\widetilde x_i - \widetilde x_{i-1}},
	\end{equation}
	which we can rewrite rearranging terms as
	\begin{equation}\label{eq:InterpDerDiff}
	(\widetilde{\mathcal I} u_h)'\eval{\widetilde K_i} - u_h'\eval{K_i} = h^p \frac{\alpha_i \left(u_h'(\widetilde x_i) - u_h'\eval{K_i}\right) + \alpha_{i-1} \left(u_h'\eval{K_i} - u_h'(\widetilde x_{i-1})\right)}{\widetilde x_i - \widetilde x_{i-1}}.
	\end{equation}
	It is clear then that the expression above depends on the signs of the variables $\alpha_{i-1}$ and $\alpha_i$. For simplicity of notation, we therefore introduce the events $A_{i,j}^{(s_i, s_j)} \in \mathcal A$, where $s_i, s_j \in \{+,-\}$, defined as
	\begin{equation}
	A_{i,j}^{(s_i, s_j)} \defeq \{\omega \in \Omega \colon \alpha_i(\omega) \in \R^{s_i}, \alpha_j(\omega) \in \R^{s_j}\}.
	\end{equation} 
	We now define $e_h \defeq u_h - \widetilde{\mathcal I}u_h$ and write for any $i = 1, \ldots, N$
	\begin{equation}
		\E\norm{e_h'}_{L^2(\widetilde K_i)}^2 = \E\left(I_{i-1, i} + I_i + I_{i+1,i} \right),
	\end{equation}
	with
	\begin{equation}
		I_{i,j} \defeq \int_{K_i \cap \widetilde K_j}(e_h')^2 \dd x,
	\end{equation}
	and where we write $I_i \defeq I_{i,i}$ and adopt the convention $I_{0,1} = I_{N+1,N} = 0$. In what follows we study $\E I_{i,j}$. We first consider $I_i$, which we express by the law of total expectation as
	\begin{equation}\label{eq:IiSplit}
		\E I_i = \sum_{s_{i-1},s_i \in \{-,+\}} \E \left[I_i \mid A_{i-1,i}^{(s_{i-1},s_i)}\right]P(A_{i-1,i}^{(s_{i-1},s_i)}). 
	\end{equation}
	In the trivial case $\alpha_{i-1} > 0$ and $\alpha_i < 0$, i.e., if $A_{i-1,i}^{(+,-)}$ occurs, we have $\widetilde K_i \cap K_i = \widetilde K_i$ and therefore $\E[I_i\mid A_{i-1,i}^{(+,-)}] = 0$. If $A_{i-1,i}^{(-,-)}$ occurs, the equality \eqref{eq:InterpDerDiff} simplifies to
	\begin{equation}
		(\widetilde{\mathcal I} u_h)'\eval{\widetilde K_i} - u_h'\eval{K_i} = -\frac{h^p\alpha_{i-1}}{\widetilde x_i - \widetilde x_{i-1}} \dbrack{u_h'}_{x_{i-1}}.
	\end{equation}
	Since in this case $\abs{K_i \cap \widetilde K_i} = \widetilde x_i - x_{i-1}$, integrating yields
	\begin{equation}\label{eq:Ii--}
		I_i = \frac{h^{2p}(\widetilde x_i - x_{i-1})}{(\widetilde x_i - \widetilde x_{i-1})^2} \alpha_{i-1}^2\dbrack{u_h'}_{x_{i-1}}^2.
	\end{equation}
	Similar calculations allow to show that if $A_{i-1,i}^{(+,+)}$ occurs, it holds
	\begin{equation}\label{eq:Ii++}
		I_i = \frac{h^{2p}(x_i - \widetilde x_{i-1})}{(\widetilde x_i - \widetilde x_{i-1})^2} \alpha_i^2 \dbrack{u_h'}_{x_i}^2,
	\end{equation}
	Finally, if $A_{i-1,i}^{(-,+)}$ occurs, we get
	\begin{equation}\label{eq:Ii-+}
		I_i = \frac{h^{2p}(x_i - x_{i-1})}{(\widetilde x_i - \widetilde x_{i-1})^2} \xi_i^2.
	\end{equation}
	where we denote 
	\begin{equation}\label{eq:ProofXi}
		\xi_i \defeq \alpha_{i-1}\dbrack{u_h'}_{x_{i-1}} + \alpha_i\dbrack{u_h'}_{x_i}.
	\end{equation}
	We thus have an expression for $\E I_i$ due to \eqref{eq:IiSplit}. We now turn to $I_{i-1,i}$. Since $\widetilde K_i \cap K_{i-1} = \emptyset$ if $\alpha_{i-1} > 0$, we have by the law of total expectation
	\begin{equation}\label{eq:Ii-1iSplit}
		\E I_{i-1,i} = \E\left[I_{i-1,i}\mid A_{i-1,i}^{(-,-)}\right]P(A_{i-1,i}^{(-,-)}) +  \E\left[I_{i-1,i} \mid A_{i-1,i}^{(-,+)}\right]P(A_{i-1,i}^{(-,+)}).
	\end{equation}
	Let us remark that adding and subtracting $u_h'\eval{K_i}$ yields
	\begin{equation}\label{eq:InterpDiffDerivatives2}
		(\mathcal{\widetilde I} u_h)'\eval{\widetilde K_i} - u_h'\eval{K_{i-1}} = (\mathcal{\widetilde I} u_h)'\eval{\widetilde K_i} - u_h'\eval{K_i} - \dbrack{u_h'}_{x_{i-1}}.
	\end{equation}
	The same computations employed for $I_i$ allow to conclude that 
	\begin{equation}
		I_{i-1,i} =
		\left\{
		\begin{alignedat}{2}
			&-h^p \alpha_{i-1} \left(\frac{h^p\alpha_{i-1}}{\widetilde x_i - \widetilde x_{i-1}} + 1\right)^2 \dbrack{u_h'}_{x_{i-1}}^2, &&\text{if } A_{i-1, i}^{(-,-)} \text{ occurs}, \\
			& -h^p \alpha_{i-1} \left(\frac{h^p}{\widetilde x_i - \widetilde x_{i-1}} \xi_i + \dbrack{u_h'}_{x_{i-1}}\right)^2, \quad&& \text{if } A_{i-1, i}^{(-,+)} \text{ occurs}.
		\end{alignedat}
		\right.
	\end{equation}
	which, replaced into \eqref{eq:Ii-1iSplit} gives the final expression for $\E I_{i-1, i}$. Similarly, for $I_{i+1,i}$ we have 
	\begin{equation}
		\E I_{i+1,i} = \E\left[I_{i+1,i}\mid A_{i-1,i}^{(+,+)}\right]P(A_{i-1,i}^{(+,+)}) + \E\left[I_{i+1,i} \mid A_{i-1,i}^{(-,+)}\right]P(A_{i-1,i}^{(-,+)}),
	\end{equation}
	where 
	\begin{equation}
		I_{i+1,i} =
		\left\{
		\begin{alignedat}{2}
			&h^p \alpha_i \left(\frac{h^p\alpha_i}{\widetilde x_i - \widetilde x_{i-1}} - 1\right)^2 \dbrack{u_h'}_{x_i}^2,  &&\text{if } A_{i-1, i}^{(+,+)} \text{ occurs}, \\
			& h^p \alpha_i \left(\frac{h^p}{\widetilde x_i - \widetilde x_{i-1}} \xi_i - \dbrack{u_h'}_{x_i}\right)^2, \quad&& \text{if } A_{i-1, i}^{(-,+)} \text{ occurs}.
		\end{alignedat}
		\right.
	\end{equation}
	We now reassemble the quantity $I_i + I_{i-1,i} + I_{i+1,i}$ by grouping terms with regards to their conditioning on the sign of $(\alpha_{i-1},\alpha_i)$. In particular, some algebraic simplifications yield 
	\begin{equation}
		I_i + I_{i-1,i} + I_{i+1,i} =
		\left\{
		\begin{alignedat}{2}
			&h^p\alpha_i \dbrack{u_h'}_{x_i}^2 - \frac{h^{2p}\alpha_i^2}{\widetilde x_i - \widetilde x_{i-1}}\dbrack{u_h'}_{x_i}^2,  &&\text{if } A_{i-1, i}^{(+,+)} \text{ occurs}, \\
			&-h^p\alpha_{i-1} \dbrack{u_h'}_{x_{i-1}}^2 - \frac{h^{2p}\alpha_{i-1}^2}{\widetilde x_i - \widetilde x_{i-1}}\dbrack{u_h'}_{x_{i-1}}^2, &&\text{if } A_{i-1, i}^{(-,-)} \text{ occurs}, \\
			& h^p\alpha_i \dbrack{u_h'}_{x_i}^2 - h^p\alpha_{i-1} \dbrack{u_h'}_{x_{i-1}}^2 - \frac{h^{2p}}{\widetilde x_i - \widetilde x_{i-1}} \xi_i^2, \quad&& \text{if } A_{i-1, i}^{(-,+)} \text{ occurs}.
		\end{alignedat}
		\right.
	\end{equation}
	We now can compute the estimator $\widetilde{\mathcal E}_{h,1}$ by summing its local contributions, as in
	\begin{equation}
	\begin{aligned}
		\widetilde{\mathcal E}_{h,1}^2 &= \sum_{i=1}^N \eta_{K,1}^2 = \sum_{i=1}^N h_i^{-(p-1)} \norm{e_h'}_{L^2(\widetilde K_i)}^2\\
		&= \sum_{i=1}^N h_i^{-(p-1)} \E(I_i + I_{i-1,i} + I_{i+1,i}) \eqdef J_1 + J_2,
	\end{aligned}
	\end{equation}
	where $J_1$ and $J_2$ are given by
	\begin{equation}
		\begin{aligned}
			J_1 &\defeq \frac{h^p}{2} \sum_{i=1}^N h_i^{-(p-1)}\left(\dbrack{u_h'}_{x_i}^2 \Eb{\alpha_i  \mid \alpha_i > 0} -  \dbrack{u_h'}_{x_{i-1}}^2 \Eb{\alpha_{i-1}\mid \alpha_i < 0}\right),
			\\
			J_2 &\defeq -\frac{h^{2p}}{4} \sum_{i=1}^N h_i^{-(p-1)}
			\begin{aligned}[t]&\left(\dbrack{u_h'}_{x_{i-1}}^2\Eb{\frac{\alpha_{i-1}^2}{\widetilde x_i - \widetilde x_{i-1}} \mid A_{i-1,i}^{(-,-)}} + \dbrack{u_h'}_{x_i}^2 \Eb{\frac{\alpha_i^2}{\widetilde x_i - \widetilde x_{i-1}} \mid A_{i-1,i}^{(+,+)}}\right.\\
				&\left. + \Eb{\frac{\xi_i^2}{\widetilde x_i - \widetilde x_{i-1}} \mid A_{i-1,i}^{(-,+)}} \right).
			\end{aligned}
		\end{aligned}
	\end{equation}
	Let us consider $J_1$ and $J_2$ separately. Rearranging the sum, noticing that under \cref{as:meshPerturbation}\ref{as:meshPerturbation_sym} it holds $\Eb{\alpha_i \mid \alpha_i > 0} = -\Eb{\alpha_i \mid \alpha_i < 0} = \E\abs{\alpha_i}$ and recalling that $\alpha_i = (\bar h_i h^{-1})^p \bar \alpha_i$, we obtain
	\begin{equation}
	\begin{aligned}
		J_1 &= \frac{h^p}2 \sum_{i=1}^{N-1} \left(h_i^{-(p-1)} + h_{i+1}^{-(p-1)}\right)\dbrack{u_h'}_{x_i}^2 \Eb{\alpha_i  \mid \alpha_i > 0}\\
		&= \frac{\E\abs{\bar\alpha_1}}{2} \sum_{i=1}^{N-1} \left(h_i^{-(p-1)} + h_{i+1}^{-(p-1)}\right) \bar h_i^p \dbrack{u_h'}_{x_i}^2 .
	\end{aligned}
	\end{equation}
	Now, let us remark that if the mesh is $\lambda$-quasi-uniform, it holds
	\begin{equation}
		\left(1+\lambda^{-(p-1)}\right) \bar h_i \leq \left(h_i^{-(p-1)} + h_{i+1}^{-(p-1)}\right)\bar h_i^p \leq \left(1+\lambda^{p-1}\right) \bar h_i,
	\end{equation}
	which implies 
	\begin{equation}\label{eq:ProofInterpBoundJ1}
		\frac{\E\abs{\bar\alpha_1}\left(1+\lambda^{-(p-1)}\right)}{2} \mathcal J^2(u_h) \leq J_1 \leq \frac{\E\abs{\bar\alpha_1}\left(1+\lambda^{p-1}\right)}{2} \mathcal J^2(u_h).
	\end{equation}
	We now turn to $J_2$. Clearly, we have $J_2 \leq 0$, which implies the desired upper bound together with \eqref{eq:ProofInterpBoundJ1}. For the lower bound, we remark that in both cases $A_{i-1,i}^{(+,+)}$ and $A_{i-1,i}^{(-,-)}$ occur, we have that $\widetilde x_i - \widetilde x_{i-1} \geq h_i/2$, and if $A_{i-1,i}^{(-,+)}$ occurs, we have $\widetilde x_i - \widetilde x_{i-1} \geq h_i$. Hence, simplifying the conditioning in the first and second terms, we obtain
	\begin{equation}
		J_2 \geq -\frac{h^{2p}}{4}\sum_{i=1}^N h_i^{-p}
		\begin{aligned}[t]
			&\left(2\dbrack{u_h'}_{x_i}^2\Eb{\alpha_i^2 \mid \alpha_i > 0} + 2\dbrack{u_h'}_{x_{i-1}}^2 \Eb{\alpha_{i-1}^2\mid \alpha_{i-1} < 0} \right. \\
			&+ \left. \Eb{\xi_i^2\mid A_{i-1,i}^{(-, +)}}\right).	
		\end{aligned}
	\end{equation}
	We now consider $\xi_i$ given in \eqref{eq:ProofXi} and use $(a+b)^2 \leq 2(a^2 + b^2)$ for $a = \alpha_{i-1}\dbrack{u_h'}_{x_{i-1}}$ and $b = \alpha_i \dbrack{u_h'}_{x_i}$ to obtain
	\begin{equation}
		\Eb{\xi_i^2\mid A_{i-1,i}^{(-, +)}} \leq 2\dbrack{u_h'}_{x_{i-1}}^2 \Eb{\alpha_{i-1}^2 \mid \alpha_{i-1} < 0} + 2\dbrack{u_h'}_{x_i}^2 \Eb{\alpha_i^2 \mid \alpha_i > 0}. 
	\end{equation}
	Therefore 
	\begin{equation}
		J_2 \geq -h^{2p}\sum_{i=1}^N h_i^{-p} \left(\dbrack{u_h'}_{x_i}^2\Eb{\alpha_i^2 \mid \alpha_i > 0} + \dbrack{u_h'}_{x_{i-1}}^2 \Eb{\alpha_{i-1}^2\mid \alpha_{i-1} < 0}\right).
	\end{equation}
	Rewriting the sum and replacing the definition of $\alpha_i$ yields
	\begin{equation}
		J_2 \geq -\sum_{i=1}^{N-1}\bar h_i^{2p}\dbrack{u_h'}_{x_i}^2 \left(h_i^{-p}\Eb{\bar\alpha_i^2 \mid \bar\alpha_i > 0} + h_{i+1}^{-p}\Eb{\bar \alpha_i^2 \mid \bar\alpha_i < 0}\right).
	\end{equation}
	Now $\bar h_i = \min\{h_i, h_{i+1}\}$ implies $h_i^{-p} \leq \bar h_i^{-p}$ and $h_{i+1}^{-p} \leq \bar h_i^{-p}$, which gives
	\begin{equation}
	\begin{aligned}
		J_2 &\geq -2\sum_{i=1}^{N-1} \bar h_i^{p}\dbrack{u_h'}_{x_i}^2 \left(\frac12\Eb{\bar\alpha_i^2 \mid \bar \alpha_i > 0} + \frac12 \Eb{\bar \alpha_i^2 \mid \bar \alpha_i < 0}\right) \\
		&\geq -2\E\abs{\bar \alpha_1}^2 \sum_{i=1}^{N-1} \bar h_i^{p}\dbrack{u_h'}_{x_i}^2,
	\end{aligned}
	\end{equation}
	where we applied the law of total expectation on the second line. Finally, we have $\bar h_i \leq h$ and $p \geq 1$, which yield
	\begin{equation}
		J_2 \geq -2 \E\abs{\bar \alpha_1}^2 h^{p-1} \mathcal J^2(u_h).
	\end{equation}
	Combining this with \eqref{eq:ProofInterpBoundJ1} then yields the desired lower bound and thus concludes the proof.	
\end{proof}

Let us remark that the coefficient appearing in the lower bound of \cref{lem:EquivProb1Jump} is positive if \cref{as:AssumptionAPosteriori} holds. We now prove the equivalence of the estimator $\widetilde{\mathcal E}_{h,2}$ given in \cref{def:ProbErrEst} with $\mathcal J(u_h)$. 

\begin{lemma}\label{lem:EquivProb2Jump} Let \cref{as:meshPerturbation} hold and let the mesh $\mathcal T_h$ be $\lambda$-quasi uniform. Then, it holds
	\begin{equation}
		\frac{\E\abs{\bar \alpha_1}^2}{2(1+\lambda)^2\lambda^{2p-1}} \mathcal J^2(u_h) \leq \widetilde{\mathcal E}_{h,2}^2 \leq 3\E\abs{\bar \alpha_1}^2 \mathcal J^2(u_h),
	\end{equation}
	where $\widetilde {\mathcal E}_{h,2}$ is given in \cref{def:ProbErrEst}.
\end{lemma}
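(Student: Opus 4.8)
The plan is to mirror, for the second estimator, the sign‑by‑sign analysis of $(\alpha_{i-1},\alpha_i)$ already carried out for $\widetilde{\mathcal E}_{h,1}$ in the proof of \cref{lem:EquivProb1Jump}, exploiting that in dimension $d=1$ the derivatives $u_h'\eval{K_i}$ and $(\widetilde{\mathcal I}u_h)'\eval{\widetilde K_i}$ are constants, so $\widetilde\eta_{K_i,2}^2 = h_i^{-(2p-3)}\Eb{\bigl(u_h'\eval{K_i} - (\widetilde{\mathcal I}u_h)'\eval{\widetilde K_i}\bigr)^2}$ is simply $h_i^{-(2p-3)}$ times the mean squared difference of these two constants (not the $L^2$‑norm over the overlap region as for $\widetilde\eta_{K_i,1}$). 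Starting from \eqref{eq:InterpDerDiff} and its case simplifications, the difference $(\widetilde{\mathcal I}u_h)'\eval{\widetilde K_i} - u_h'\eval{K_i}$ vanishes on $A_{i-1,i}^{(+,-)}$, equals $-h^p\alpha_{i-1}\dbrack{u_h'}_{x_{i-1}}/(\widetilde x_i - \widetilde x_{i-1})$ on $A_{i-1,i}^{(-,-)}$, equals $h^p\alpha_i\dbrack{u_h'}_{x_i}/(\widetilde x_i - \widetilde x_{i-1})$ on $A_{i-1,i}^{(+,+)}$ (cf. \eqref{eq:Ii++}), and equals $h^p\xi_i/(\widetilde x_i - \widetilde x_{i-1})$ on $A_{i-1,i}^{(-,+)}$ with $\xi_i$ as in \eqref{eq:ProofXi} (cf. \eqref{eq:Ii-+}). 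The law of total expectation over these four events then expresses $\widetilde\eta_{K_i,2}^2$ as $h_i^{-(2p-3)}h^{2p}$ times a sum of three conditional‑expectation terms carrying, respectively, $\dbrack{u_h'}_{x_{i-1}}^2$, $\dbrack{u_h'}_{x_i}^2$, and $\Eb{\xi_i^2 \mid A_{i-1,i}^{(-,+)}}$.

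Next I would record the elementary ingredients. Writing $\widetilde x_i - \widetilde x_{i-1} = h_i + h^p\alpha_i - h^p\alpha_{i-1}$ and using $\abs{h^p\alpha_j} = \bar h_j^p\abs{\bar\alpha_j} \le \bar h_j^p/2 \le h_i/2$, one obtains $h_i/2 \le \widetilde x_i - \widetilde x_{i-1} \le 2h_i$ on $A_{i-1,i}^{(-,-)}$ and $A_{i-1,i}^{(+,+)}$, and $\widetilde x_i - \widetilde x_{i-1} \ge h_i$ on $A_{i-1,i}^{(-,+)}$; moreover $2h_i \le (1+\lambda)h_i$ since $\lambda > 1$. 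Because the $\bar\alpha_j$ are i.i.d.\ and each $\alpha_j$ has a radial, hence in $d=1$ symmetric, density, $P(\alpha_j>0) = P(\alpha_j<0) = 1/2$ and $\Eb{\alpha_j^2 \mid \alpha_j > 0} = \Eb{\alpha_j^2 \mid \alpha_j < 0} = \E\abs{\alpha_j}^2$, so every conditional second moment appearing above equals $\E\abs{\alpha_j}^2$ (the factors $1/4$ entering later being the event probabilities $P(A_{i-1,i}^{(s,s')})$), while $h^{2p}\E\abs{\alpha_j}^2 = \bar h_j^{2p}\E\abs{\bar\alpha_1}^2$ by the definition $\alpha_j = (h^{-1}\bar h_j)^p\bar\alpha_j$. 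Finally, $\lambda$‑quasi‑uniformity gives $\bar h_j \in [h_i/\lambda, h_i]$ for $j\in\{i-1,i\}$, whence $\bar h_j^{2p}h_i^{-(2p-1)} = \bar h_j(\bar h_j/h_i)^{2p-1} \in [\lambda^{-(2p-1)}\bar h_j, \bar h_j]$ since $2p-1>0$.

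For the upper bound I would bound each denominator from below ($\ge h_i/2$, resp.\ $\ge h_i$), apply $(a+b)^2 \le 2a^2+2b^2$ to $\xi_i$, and collect; using $\Eb{\cdot \mid A}P(A)$ with the symmetry identities this yields $\widetilde\eta_{K_i,2}^2 \le \tfrac32\E\abs{\bar\alpha_1}^2\, h_i^{-(2p-1)}\bigl(\bar h_{i-1}^{2p}\dbrack{u_h'}_{x_{i-1}}^2 + \bar h_i^{2p}\dbrack{u_h'}_{x_i}^2\bigr) \le \tfrac32\E\abs{\bar\alpha_1}^2\bigl(\bar h_{i-1}\dbrack{u_h'}_{x_{i-1}}^2 + \bar h_i\dbrack{u_h'}_{x_i}^2\bigr)$. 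Summing over $i=1,\dots,N$ and using $\dbrack{u_h'}_{x_0} = \dbrack{u_h'}_{x_N} = 0$, each of the two resulting sums telescopes (after an index shift $j=i-1$ in the first) to $\mathcal J^2(u_h)$, giving $\widetilde{\mathcal E}_{h,2}^2 \le 3\E\abs{\bar\alpha_1}^2\mathcal J^2(u_h)$. For the lower bound I would discard the nonnegative $A_{i-1,i}^{(-,+)}$‑term, bound the remaining two denominators from above by $(1+\lambda)h_i$, and use the same moment and quasi‑uniformity identities to get $\widetilde\eta_{K_i,2}^2 \ge \frac{\E\abs{\bar\alpha_1}^2}{4(1+\lambda)^2\lambda^{2p-1}}\bigl(\bar h_{i-1}\dbrack{u_h'}_{x_{i-1}}^2 + \bar h_i\dbrack{u_h'}_{x_i}^2\bigr)$; summing over $i$ gives the stated lower bound $\frac{\E\abs{\bar\alpha_1}^2}{2(1+\lambda)^2\lambda^{2p-1}}\mathcal J^2(u_h)$.

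There is no conceptually hard step here — the argument is strictly simpler than that of \cref{lem:EquivProb1Jump}, since $\widetilde{\mathcal E}_{h,2}$ only sees the pointwise jump of two constant derivatives instead of an $L^2$‑norm over the overlapping elements. The one point requiring genuine care is the bookkeeping: tracking which conditional second moments survive on each sign‑event, keeping the factors $1/2$, $1/4$ and $2$ straight, and passing correctly between $h_i$ and $\bar h_i$ via $\lambda$‑quasi‑uniformity so that $h_i^{-(2p-1)}\bar h_j^{2p}$ collapses to $\bar h_j$ up to the right power of $\lambda$. That is precisely where the constants $3$ and $2(1+\lambda)^2\lambda^{2p-1}$ are produced, and it is the most error‑prone part of the proof.
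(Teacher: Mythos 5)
Your proposal is correct and follows essentially the same route as the paper's proof: the same decomposition over the four sign events of $(\alpha_{i-1},\alpha_i)$ inherited from \cref{lem:EquivProb1Jump}, the same bounds $h_i/2 \leq \widetilde x_i - \widetilde x_{i-1} \leq (1+\lambda)h_i$ (with $\geq h_i$ on $A_{i-1,i}^{(-,+)}$), the same use of $(a+b)^2\leq 2(a^2+b^2)$ on $\xi_i$ for the upper bound and the discarding of the nonnegative $\xi_i$-term for the lower bound, and the same $\lambda$-quasi-uniformity bookkeeping collapsing $h_i^{-(2p-1)}\bar h_j^{2p}$ to $\bar h_j$ up to powers of $\lambda$, yielding the identical constants $3$ and $2(1+\lambda)^2\lambda^{2p-1}$.
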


\begin{proof} As $\abs{K_i} = h_i$, we have
	\begin{equation}
		\widetilde{\mathcal E}_{h,2} = \sum_{i=1}^N h_i^{-(2p-3)} \Eb{\abs{u_h'\eval{K_i} - (\widetilde {\mathcal I} u_h)'\eval{\widetilde K_i}}^2}.
	\end{equation}
	Proceeding similarly to \eqref{eq:Ii--}, \eqref{eq:Ii++} and \eqref{eq:Ii-+} and applying the law of total expectation, we obtain
	\begin{equation}\label{eq:ProofSecondInterp}
	\begin{split}
		\Eb{\abs{u_h'\eval{K_i} - (\widetilde {\mathcal I} u_h)'\eval{\widetilde K_i}}^2} = 
		\begin{aligned}[t]
			&\frac{h^{2p}}4\dbrack{u_h'}_{x_i}^2\Eb{\frac{\alpha_i^2}{(\widetilde x_i - \widetilde x_{i-1})^2} \mid A_{i-1,i}^{(+,+)}}\\
			&+ \frac{h^{2p}}4\dbrack{u_h'}_{x_{i-1}}^2 \Eb{\frac{\alpha_{i-1}^2}{(\widetilde x_i - \widetilde x_{i-1})^2} \mid A_{i-1,i}^{(-,-)}} \\
			&+ \frac{h^{2p}}4 \Eb{\frac{\xi_i^2}{(\widetilde x_i - \widetilde x_{i-1})^2} \mid A_{i-1,i}^{(-,+)}},
		\end{aligned}
	\end{split}	
	\end{equation}
	where we recall the notation $\xi_i$ introduced in \eqref{eq:ProofXi}. Let us first consider the lower bound. Since $\xi_i^2 \geq 0$ a.s., and $\widetilde x_i - \widetilde x_{i-1} \leq (1+\lambda)h_i$ a.s. under the assumption that the mesh is $\lambda$-quasi-uniform, we have 
	\begin{equation}
		\Eb{\abs{u_h'\eval{K_i} - (\widetilde {\mathcal I} u_h)'\eval{\widetilde K_i}}^2} \geq 
		\frac{h^{2p} h_i^{-2}}{4(1+\lambda)^2}
		\begin{aligned}[t]
			&\left(\dbrack{u_h'}_{x_i}^2 \Eb{\alpha_i^2 \mid \alpha_i > 0} \right. \\
		 	&\left. + \dbrack{u_h'}_{x_{i-1}}^2 \Eb{\alpha_{i-1}^2 \mid \alpha_{i-1} < 0} \right).
		\end{aligned}
	\end{equation}	
	Assembling the sum, rearranging terms and recalling that $\alpha_i = (h^{-1}\bar h_i)^{p} \bar \alpha_i$ with $\bar h_i = \min\{h_i, h_{i+1}\}$, we then obtain
	\begin{equation}
	\begin{aligned}
		\widetilde{\mathcal E}_{h,2}^2 &\geq 
		\frac1{2(1+\lambda)^2}\sum_{i=1}^{N-1} \bar h_i^{2p} h_i^{1-2p} \dbrack{u_h'}_{x_i}^2 \left(\frac12\Eb{\bar\alpha_i^2 \mid \bar\alpha_i > 0} + \frac12\Eb{\bar\alpha_i^2 \mid \bar\alpha_i < 0}\right) \\
		&\geq \frac{\E\abs{\bar\alpha_1}^2}{2(1+\lambda)^2\lambda^{2p-1}} \sum_{i=1}^{N-1} \bar h_i  \dbrack{u_h'}_{x_i}^2  = \frac{\E\abs{\bar\alpha_1}^2}{2(1+\lambda)^2\lambda^{2p-1}}\mathcal J^2(u_h),
	\end{aligned}
	\end{equation}
	where we employed the law of total expectation and the inequality $h_i^{1-2p} \leq \lambda^{1-2p} \bar h_i^{1-2p}$ on the second line. Hence, we proved the lower bound. For the upper bound, using again the inequality $(a+b)^2 \leq 2(a^2 + b^2)$ we obtain 
	\begin{equation}
		\xi_i^2 \leq 2\alpha_i^2 \dbrack{u_h'}_{x_i}^2 + 2\alpha_{i-1}^2 \dbrack{u_h'}_{x_{i-1}}^2, \quad \text{a.s},
	\end{equation}
	so that
	\begin{equation}
		\Eb{\frac{\xi_i^2}{(\widetilde x_i - \widetilde x_{i-1})^2} \mid A_{i-1,i}^{(-,+)}} \leq 
		\begin{aligned}[t] 
			&2 \dbrack{u_h'}_{x_i}^2 \Eb{\frac{\alpha_i^2}{(\widetilde x_i - \widetilde x_{i-1})^2} \mid A_{i-1,i}^{(-,+)}} \\
			&+ 2\dbrack{u_h'}_{x_{i-1}}^2 \Eb{\frac{\alpha_{i-1}^2}{(\widetilde x_i - \widetilde x_{i-1})^2} \mid A_{i-1,i}^{(-,+)}}.
		\end{aligned}
	\end{equation}
	Under $A_{i-1,i}^{(-,+)}$, we have $\widetilde x_i - \widetilde x_{i-1} \geq h_i$, which implies
	\begin{equation}
		\Eb{\frac{\xi_i^2}{(\widetilde x_i - \widetilde x_{i-1})^2} \mid A_{i-1,i}^{(-,+)}} \leq 2h_i^{-2} \left( \dbrack{u_h'}_{x_i}^2 \Eb{\alpha_i^2 \mid \alpha_i > 0} + \dbrack{u_h'}_{x_{i-1}}^2 \Eb{\alpha_{i-1}^2 \mid \alpha_{i-1} < 0} \right).
	\end{equation}
	Then, considering that under $A_{i-1,i}^{(+,+)}$ or $A_{i-1,i}^{(-,-)}$ it holds $\widetilde x_i - \widetilde x_{i-1} \geq h_i/2$ and plugging into \eqref{eq:ProofSecondInterp} we have
	\begin{equation}
		\Eb{\abs{u_h'\eval{K_i} - (\widetilde {\mathcal I} u_h)'\eval{\widetilde K_i}}^2} \leq 
		\frac32 h_i^{-2} h^{2p} \left( \dbrack{u_h'}_{x_i}^2 \Eb{\alpha_i^2 \mid \alpha_i > 0} + \dbrack{u_h'}_{x_{i-1}}^2 \Eb{\alpha_{i-1}^2 \mid \alpha_{i-1} < 0} \right).
	\end{equation}
	We can therefore reassemble and rearrange the sum following the same procedure as for the lower bound, which, together with $h_i^{1-2p} \leq \bar h_i^{1-2p}$, yields
	\begin{equation}
		\widetilde{\mathcal E}_{h,2}^2 \leq 3\E\abs{\bar\alpha_1}^2 \mathcal J^2(u_h),
	\end{equation}
	which proves the desired result.
\end{proof}

We finally prove the equivalence of the deterministic error estimator $\mathcal E_h$ given in \cref{def:DetErrEst} with the quantity $\mathcal J(u_h)$.

\begin{lemma}\label{lem:EquivDetJump} Let the mesh $\mathcal T_h$ be $\lambda$-quasi-uniform. Then, it holds
	\begin{equation}
		\frac{\lambda m^2}{6(1+\lambda)^3 M^2}\mathcal J^2(u_h) \leq \mathcal E_h^2 \leq \frac{2\lambda^2 M^2}{3(1+\lambda)m^2} \mathcal J^2(u_h),
	\end{equation}
	where $\mathcal E_h$ is given in \cref{def:DetErrEst} and where $m = \underline \kappa$ and $M = \norm{\kappa}_{L^\infty(D)}$.
\end{lemma}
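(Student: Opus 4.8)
The plan is to compute $\mathcal E_h^2$ explicitly as a weighted sum of the squared derivative jumps $\dbrack{u_h'}_{x_i}^2$ and then compare it, node by node, with $\mathcal J^2(u_h) = \sum_{i=1}^{N-1}\bar h_i\dbrack{u_h'}_{x_i}^2$. First I would make the local contribution $\eta_j^2$ explicit. Since $\ell_j$ is affine on $K_j = [x_{j-1},x_j]$, an interval of length $h_j$, with $\ell_j(x_{j-1}) = \tau_{j,1}$ and $\ell_j(x_j) = -\tau_{j,0}$, the elementary formula for the $L^2$-norm of an affine function interpolating two endpoint values gives
\[
\norm{\ell_j}_{L^2(K_j)}^2 = \frac{h_j}{3}\left(\tau_{j,1}^2 - \tau_{j,1}\tau_{j,0} + \tau_{j,0}^2\right).
\]
Because $m \le \kappa(x) \le M$ pointwise, one has $M^{-1}\abs{\ell_j(x)} \le \abs{\kappa(x)^{-1}\ell_j(x)} \le m^{-1}\abs{\ell_j(x)}$ for a.e.\ $x \in K_j$, hence $M^{-2}\norm{\ell_j}_{L^2(K_j)}^2 \le \eta_j^2 \le m^{-2}\norm{\ell_j}_{L^2(K_j)}^2$.

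Next I would eliminate the cross term $-\tau_{j,1}\tau_{j,0}$, which can have either sign, by the elementary two-sided bound $\tfrac12(a^2+b^2) \le a^2 - ab + b^2 \le \tfrac32(a^2+b^2)$; this sandwiches $\norm{\ell_j}_{L^2(K_j)}^2$ between $\tfrac{h_j}{6}(\tau_{j,0}^2 + \tau_{j,1}^2)$ and $\tfrac{h_j}{2}(\tau_{j,0}^2 + \tau_{j,1}^2)$, reducing the estimator to a diagonal quadratic form in the jumps. Recalling from \cref{def:DetErrEst} that
\[
\tau_{j,0}^2 = \frac{h_j^2}{(h_j + h_{j+1})^2}\dbrack{u_h'}_{x_j}^2\kappa(x_j)^2, \qquad \tau_{j,1}^2 = \frac{h_j^2}{(h_{j-1}+h_j)^2}\dbrack{u_h'}_{x_{j-1}}^2\kappa(x_{j-1})^2,
\]
and bounding $\kappa(x_k)^2 \in [m^2, M^2]$, I obtain constants $c_1 = \tfrac{m^2}{6M^2}$ and $c_2 = \tfrac{M^2}{2m^2}$ such that $\mathcal E_h^2$ is bounded below by $c_1 S$ and above by $c_2 S$, where $S \defeq \sum_{j=1}^N h_j\left(\tfrac{h_j^2}{(h_j+h_{j+1})^2}\dbrack{u_h'}_{x_j}^2 + \tfrac{h_j^2}{(h_{j-1}+h_j)^2}\dbrack{u_h'}_{x_{j-1}}^2\right)$.

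Then I would reorganise $S$ by interior nodes. With the convention $\dbrack{u_h'}_{x_0} = \dbrack{u_h'}_{x_N} = 0$, each jump $\dbrack{u_h'}_{x_i}^2$, $i = 1,\dots,N-1$, receives exactly two contributions to $S$: one from the $\tau_{i,0}$ term of $\eta_i^2$ with coefficient $\tfrac{h_i^3}{(h_i+h_{i+1})^2}$, and one from the $\tau_{i+1,1}$ term of $\eta_{i+1}^2$ with coefficient $\tfrac{h_{i+1}^3}{(h_i+h_{i+1})^2}$, so that $S = \sum_{i=1}^{N-1}\tfrac{h_i^3 + h_{i+1}^3}{(h_i+h_{i+1})^2}\dbrack{u_h'}_{x_i}^2$. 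Using the identity $\tfrac{h_i^3+h_{i+1}^3}{(h_i+h_{i+1})^2} = \tfrac{h_i^2 - h_ih_{i+1} + h_{i+1}^2}{h_i+h_{i+1}}$ together with $\lambda$-quasi-uniformity — specifically $\tfrac{1}{1+\lambda} \le \tfrac{h_j}{h_j+h_{j\pm1}} \le 1$, $2\bar h_i \le h_i+h_{i+1}\le(1+\lambda)\bar h_i$ and $\bar h_i \le h_i, h_{i+1} \le \lambda\bar h_i$ — one derives $C_\lambda^{-1}\bar h_i \le \tfrac{h_i^3+h_{i+1}^3}{(h_i+h_{i+1})^2} \le C_\lambda'\bar h_i$ with explicit $\lambda$-dependent constants. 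Substituting these into the sandwich for $\mathcal E_h^2$ and collecting the $m$, $M$ and $\lambda$ factors yields exactly
\[
\frac{\lambda m^2}{6(1+\lambda)^3 M^2}\mathcal J^2(u_h) \le \mathcal E_h^2 \le \frac{2\lambda^2 M^2}{3(1+\lambda)m^2}\mathcal J^2(u_h).
\]

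The argument is essentially bookkeeping, since every inequality used is elementary. The only genuinely non-trivial ingredient is the equivalence $\tfrac{h_i^3+h_{i+1}^3}{(h_i+h_{i+1})^2} \asymp \bar h_i$, which is precisely where $\lambda$-quasi-uniformity enters; the somewhat delicate part is tracking the powers of $(1+\lambda)$ and the numerical factors so as to land on the precise constants displayed in the statement rather than merely on constants of the same shape.
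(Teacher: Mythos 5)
Your proposal is correct and follows essentially the same route as the paper: the explicit $L^2$-norm of the affine $\ell_j$, the two-sided elementary bound eliminating the cross term $\tau_{j,0}\tau_{j,1}$, the pointwise bounds on $\kappa$, reassembly of the sum over interior nodes, and $\lambda$-quasi-uniformity — the paper merely bounds each ratio $h_j/(h_j+h_{j\pm 1})$ and the node weight $h_i+h_{i+1}$ separately, rather than the aggregate weight $(h_i^3+h_{i+1}^3)/(h_i+h_{i+1})^2$ as you do. The only inaccuracy is the claim that your bookkeeping lands \emph{exactly} on the displayed constants: carried out carefully it produces slightly different (in fact sharper) ones, e.g.\ $(h_i^3+h_{i+1}^3)/(h_i+h_{i+1})^2 \geq \bar h_i/2$ gives the lower constant $m^2/(12M^2) \geq \lambda m^2/(6(1+\lambda)^3M^2)$, which still implies the stated inequalities.
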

\begin{proof} Simple algebraic computations yield
	\begin{equation}
		\norm{\ell_j}_{L^2(K_j)}^2 = \frac{h_j}{3} \left(\tau_{j,0}^2 - \tau_{j,0}\tau_{j,1} + \tau_{j,1}^2\right),
	\end{equation}
	where $\ell_j$ are the linear functions employed in \cref{def:DetErrEst}. Applying the inequalities $(a^2+b^2)/2 \leq a^2-ab+b^2 \leq 2(a^2 + b^2)$ we obtain
	\begin{equation}
		\frac{h_j}{6M^2} \left(\tau_{j,0}^2 + \tau_{j,1}^2\right) \leq \eta_j^2 \leq \frac{2h_j}{3m^2} \left(\tau_{j,0}^2 + \tau_{j,1}^2\right).
	\end{equation}
	We now remark that if the mesh $\mathcal T_h$ is $\lambda$-quasi-uniform and under the assumptions on $\kappa$ it holds for $k \in \{0,1\}$
	\begin{equation}
		\frac{m^2}{(1+\lambda)^2} \dbrack{u_h'}_{x_{j-k}}^2\leq \tau_{j,k}^2 \leq \frac{\lambda^2 M^2}{(1+\lambda)^2}\dbrack{u_h'}_{x_{j-k}}^2,
	\end{equation}
	which, in turn, implies
	\begin{equation}
		\frac{m^2h_j }{6(1+\lambda)^2M^2} \left(\dbrack{u_h'}_{x_{j-1}}^2 + \dbrack{u_h'}_{x_j}^2\right) \leq \eta_j^2 \leq \frac{2\lambda^2 M^2 h_j}{3(1+\lambda)^2m^2} \left(\dbrack{u_h'}_{x_{j-1}}^2 + \dbrack{u_h'}_{x_j}^2\right).
	\end{equation}
	We now focus on the upper bound. Reassembling the global error estimator $\mathcal E_h$, we have
	\begin{equation}
		\begin{aligned}
			\mathcal E_h^2 &\leq \frac{2\lambda^2 M^2}{3(1+\lambda)^2m^2} \sum_{j=1}^N h_j\left(\dbrack{u_h'}_{x_{j-1}}^2 + \dbrack{u_h'}_{x_j}^2\right)\\
			&= \frac{2\lambda^2 M^2}{3(1+\lambda)^2m^2} \sum_{j=1}^{N-1} (h_j + h_{j+1}) \dbrack{u_h'}_{x_j}^2 \\
			&\leq \frac{2\lambda^2 M^2}{3(1+\lambda)m^2} \mathcal J^2(u_h),
		\end{aligned}
	\end{equation}
	where we recall $\bar h_j = \min\{h_j, h_{j+1}\}$, so that $h_j + h_{j+1} \leq (1+\lambda)\bar h_j$. We conclude the proof proceeding similarly for the lower bound as in \cref{lem:EquivProb2Jump}.
\end{proof}

We can finally prove \cref{thm:MainThmAPosteriori} and conclude the error analysis.

\begin{proof}[Proof of \cref{thm:MainThmAPosteriori}] Let us first consider $\widetilde{\mathcal E}_{h,1}$. Under \cref{as:AssumptionAPosteriori}, we have for the lower bound of \cref{lem:EquivProb1Jump}
	\begin{equation}
		\left(\frac{\E\abs{\bar\alpha_1}\left(1+\lambda^{-(p-1)}\right)}{2} - 2h^{p-1}\E\abs{\bar\alpha_1}^2\right) \mathcal J^2(u_h) \geq C \E\abs{\bar\alpha_1} \mathcal J^2(u_h)
	\end{equation}
	for a constant $C > 0$. Hence, due to \cref{lem:EquivDetJump} we have that there exists a constant $\widehat C$ such that
	\begin{equation}
		\widetilde{\mathcal E}_{h,1} \geq \widehat C \mathcal E_h,
	\end{equation}
	and therefore, \cref{thm:BabuskaThm} implies
	\begin{equation}
		\norm{u-u_h}_V \leq C_{\mathrm{up}} \mathcal E_h \leq C_{\mathrm{up}} \widehat C\widetilde{\mathcal E}_{h,1},
	\end{equation}
	which yields the desired upper bound with $\widetilde C_{\mathrm{up}} = \widehat C C_{\mathrm{up}}$. The lower bound follows equivalently under the additional regularity required by \cref{thm:BabuskaThm}. Similarly, the results for $\widetilde{\mathcal E}_{h,2}$ follows from \cref{lem:EquivProb1Jump,lem:EquivDetJump}, together with \cref{thm:BabuskaThm}.
\end{proof}

\section{Conclusion}\label{sec:Conclusion}

We have introduced a novel probabilistic methods for PDEs based on the FEM and random meshes, the RM-FEM. We demonstrated how our methodology can be successful when employed in pipelines of computations, such as Bayesian inverse problems. We also show a rigorous use of probabilistic methods for a posteriori error estimators, often speculated in the field. Extending such analysis to PN methods for ODEs would be of interest, thus creating a link between the guiding principles of PN and more classical theories. Generalizing the analysis of the RM-FEM to higher-dimensional PDEs as well as for parabolic or hyperbolic problems represent also interesting future work.

\subsection*{Acknowledgements} 

The authors are partially supported by the Swiss National Science Foundation, under grant No. 200020\_172710.

\def\cprime{$'$}


\begin{thebibliography}{10}
	
	\bibitem{AbD20}
	{\sc A.~Abdulle and A.~Di~Blasio}, {\em A {B}ayesian {N}umerical
		{H}omogenization {M}ethod for {E}lliptic {M}ultiscale {I}nverse {P}roblems},
	SIAM/ASA J. Uncertain. Quantif., 8 (2020), pp.~414--450.
	
	\bibitem{AbG20}
	{\sc A.~Abdulle and G.~Garegnani}, {\em Random time step probabilistic methods
		for uncertainty quantification in chaotic and geometric numerical
		integration}, Stat. Comput., 30 (2020), pp.~907--932.
	
	\bibitem{AGZ20}
	{\sc A.~Abdulle, G.~Garegnani, and A.~Zanoni}, {\em Ensemble {K}alman {F}ilter
		for {M}ultiscale {I}nverse {P}roblems}, Multiscale Model. Simul., 18 (2020),
	pp.~1565--1594.
	
	\bibitem{AiO00}
	{\sc M.~Ainsworth and J.~T. Oden}, {\em A posteriori error estimation in finite
		element analysis}, Pure and Applied Mathematics (New York),
	Wiley-Interscience [John Wiley \& Sons], New York, 2000.
	
	\bibitem{BaR81}
	{\sc I.~Babu\v{s}ka and W.~C. Rheinboldt}, {\em A posteriori error analysis of
		finite element solutions for one-dimensional problems}, SIAM J. Numer. Anal.,
	18 (1981), pp.~565--589.
	
	\bibitem{BHT20}
	{\sc N.~Bosch, P.~Hennig, and F.~Tronarp}, {\em Calibrated adaptive
		probabilistic {ODE} solvers}.
	\newblock arXiv preprint arXiv:2012.08202, 2020.
	
	\bibitem{BrS08}
	{\sc S.~C. Brenner and L.~R. Scott}, {\em The mathematical theory of finite
		element methods}, vol.~15 of Texts in Applied Mathematics, Springer, New
	York, third~ed., 2008.
	
	\bibitem{CDS18}
	{\sc D.~Calvetti, M.~Dunlop, E.~Somersalo, and A.~M. Stuart}, {\em Iterative
		updating of model error for {B}ayesian inversion}, Inverse Problems, 34
	(2018), pp.~025008, 38.
	
	\bibitem{CES14}
	{\sc D.~Calvetti, O.~Ernst, and E.~Somersalo}, {\em Dynamic updating of
		numerical model discrepancy using sequential sampling}, Inverse Problems, 30
	(2014), pp.~114019, 19.
	
	\bibitem{ChC19}
	{\sc O.~A. Chkrebtii and D.~A. Campbell}, {\em Adaptive step-size selection for
		state-space probabilistic differential equation solvers}, Stat. Comput., 29
	(2019), pp.~1285--1295.
	
	\bibitem{CCC16}
	{\sc O.~A. Chkrebtii, D.~A. Campbell, B.~Calderhead, and M.~A. Girolami}, {\em
		Bayesian solution uncertainty quantification for differential equations},
	Bayesian Anal., 11 (2016), pp.~1239--1267.
	
	\bibitem{Cia02}
	{\sc P.~G. Ciarlet}, {\em The finite element method for elliptic problems.},
	vol.~40 of Classics Appl. Math., SIAM, Philadelphia, 2002.
	
	\bibitem{COS17b}
	{\sc J.~Cockayne, C.~J. Oates, T.~J. Sullivan, and M.~Girolami}, {\em
		Probabilistic numerical methods for partial differential equations and
		{B}ayesian inverse problems}.
	\newblock arXiv preprint arXiv:1605.07811, 2017.
	
	\bibitem{COS17}
	{\sc J.~Cockayne, C.~J. Oates, T.~J. Sullivan, and M.~Girolami}, {\em
		Probabilistic numerical methods for {PDE}-constrained {B}ayesian inverse
		problems}, AIP Conference Proceedings, 1853 (2017), p.~060001.
	
	\bibitem{COS19}
	{\sc J.~Cockayne, C.~J. Oates, T.~J. Sullivan, and M.~Girolami}, {\em Bayesian
		probabilistic numerical methods}, SIAM Rev., 61 (2019), pp.~756--789.
	
	\bibitem{CGS17}
	{\sc P.~R. Conrad, M.~Girolami, S.~S\"{a}rkk\"{a}, A.~M. Stuart, and
		K.~Zygalakis}, {\em Statistical analysis of differential equations:
		introducing probability measures on numerical solutions}, Stat. Comput., 27
	(2017), pp.~1065--1082.
	
	\bibitem{CRS13}
	{\sc S.~L. Cotter, G.~O. Roberts, A.~M. Stuart, and D.~White}, {\em M{CMC}
		methods for functions: modifying old algorithms to make them faster},
	Statist. Sci., 28 (2013), pp.~424--446.
	
	\bibitem{CrF20}
	{\sc M.~Croci and P.~E. Farrell}, {\em Complexity bounds on supermesh
		construction for quasi-uniform meshes}, J. Comput. Phys., 414 (2020),
	pp.~109459, 7.
	
	\bibitem{CGR18}
	{\sc M.~Croci, M.~B. Giles, M.~E. Rognes, and P.~E. Farrell}, {\em Efficient
		white noise sampling and coupling for multilevel {M}onte {C}arlo with
		nonnested meshes}, SIAM/ASA J. Uncertain. Quantif., 6 (2018), pp.~1630--1655.
	
	\bibitem{DaS16}
	{\sc M.~Dashti and A.~M. Stuart}, {\em The {B}ayesian {A}pproach to {I}nverse
		{P}roblems}, in Handbook of Uncertainty Quantification, Springer, 2016,
	pp.~1--118.
	
	\bibitem{GFY21}
	{\sc M.~Girolami, E.~Febrianto, G.~Yin, and F.~Cirak}, {\em The statistical
		finite element method (stat{FEM}) for coherent synthesis of observation data
		and model predictions}, Comput. Methods Appl. Mech. Engrg., 375 (2021),
	pp.~113533, 32.
	
	\bibitem{HSV14}
	{\sc M.~Hairer, A.~M. Stuart, and S.~J. Vollmer}, {\em Spectral gaps for a
		{M}etropolis--{H}astings algorithm in infinite dimensions}, Ann. Appl.
	Probab., 24 (2014), pp.~2455--2490.
	
	\bibitem{HOG15}
	{\sc P.~Hennig, M.~A. Osborne, and M.~Girolami}, {\em Probabilistic numerics
		and uncertainty in computations}, Proc. A., 471 (2015), pp.~20150142, 17.
	
	\bibitem{KaS05}
	{\sc J.~Kaipio and E.~Somersalo}, {\em Statistical and computational inverse
		problems}, vol.~160 of Applied Mathematical Sciences, Springer-Verlag, New
	York, 2005.
	
	\bibitem{KeH16}
	{\sc H.~Kersting and P.~Hennig}, {\em Active uncertainty calibration in
		{B}ayesian {ODE} solvers}, in Proceedings of the 32nd Conference on
	Uncertainty in Artificial Intelligence (UAI 2016), {AUAI} Press, 2016,
	pp.~309--318.
	
	\bibitem{KSH20}
	{\sc H.~Kersting, T.~J. Sullivan, and P.~Hennig}, {\em Convergence rates of
		{G}aussian {ODE} filters}, Stat. Comput., 30 (2020), pp.~1791--1816.
	
	\bibitem{KTB13}
	{\sc D.~P. Kroese, T.~Taimre, and Z.~I. Botev}, {\em Handbook of {M}onte
		{C}arlo methods}, vol.~706, John Wiley \& Sons, 2013.
	
	\bibitem{LSS19b}
	{\sc H.~C. Lie, A.~M. Stuart, and T.~J. Sullivan}, {\em Strong convergence
		rates of probabilistic integrators for ordinary differential equations},
	Stat. Comput., 29 (2019), pp.~1265--1283.
	
	\bibitem{LST18}
	{\sc H.~C. Lie, T.~J. Sullivan, and A.~L. Teckentrup}, {\em Random {F}orward
		{M}odels and {L}og-{L}ikelihoods in {B}ayesian {I}nverse {P}roblems},
	SIAM/ASA J. Uncertain. Quantif., 6 (2018), pp.~1600--1629.
	
	\bibitem{OCA19}
	{\sc C.~J. Oates, J.~Cockayne, R.~G. Aykroyd, and M.~Girolami}, {\em Bayesian
		probabilistic numerical methods in time-dependent state estimation for
		industrial hydrocyclone equipment}, J. Amer. Statist. Assoc., 114 (2019),
	pp.~1518--1531.
	
	\bibitem{OaS19}
	{\sc C.~J. Oates and T.~J. Sullivan}, {\em A modern retrospective on
		probabilistic numerics}, Stat. Comput., 29 (2019), pp.~1335--1351.
	
	\bibitem{Owh15}
	{\sc H.~Owhadi}, {\em Bayesian numerical homogenization}, Multiscale Model.
	Simul., 13 (2015), pp.~812--828.
	
	\bibitem{Owh17}
	{\sc H.~Owhadi}, {\em Multigrid with rough coefficients and multiresolution
		operator decomposition from hierarchical information games}, SIAM Rev., 59
	(2017), pp.~99--149.
	
	\bibitem{OwZ17}
	{\sc H.~Owhadi and L.~Zhang}, {\em Gamblets for opening the
		complexity-bottleneck of implicit schemes for hyperbolic and parabolic
		{ODE}s/{PDE}s with rough coefficients}, J. Comput. Phys., 347 (2017),
	pp.~99--128.
	
	\bibitem{Qua09}
	{\sc A.~Quarteroni}, {\em Numerical Models for Differential Problems}, vol.~2
	of Modeling, Simulation \& Applications, Springer, 2009.
	
	\bibitem{RPK17}
	{\sc M.~Raissi, P.~Perdikaris, and G.~E. Karniadakis}, {\em Inferring solutions
		of differential equations using noisy multi-fidelity data}, J. Comput. Phys.,
	335 (2017), pp.~736--746.
	
	\bibitem{RPK17b}
	{\sc M.~Raissi, P.~Perdikaris, and G.~E. Karniadakis}, {\em Machine learning of
		linear differential equations using {G}aussian processes}, J. Comput. Phys.,
	348 (2017), pp.~683--693.
	
	\bibitem{SDH14}
	{\sc M.~Schober, D.~Duvenaud, and P.~Hennig}, {\em Probabilistic {ODE} solvers
		with {R}unge--{K}utta means}, in Advances in Neural Information Processing
	Systems 27, Curran Associates, Inc., 2014, pp.~739--747.
	
	\bibitem{SSH19}
	{\sc M.~Schober, S.~S\"{a}rkk\"{a}, and P.~Hennig}, {\em A probabilistic model
		for the numerical solution of initial value problems}, Stat. Comput., 29
	(2019), pp.~99--122.
	
	\bibitem{Ski92}
	{\sc J.~Skilling}, {\em Bayesian solution of ordinary differential equations},
	in Maximum entropy and Bayesian methods, Springer, 1992, pp.~23--37.
	
	\bibitem{Stu10}
	{\sc A.~M. Stuart}, {\em Inverse problems: a {B}ayesian perspective}, Acta
	Numer., 19 (2010), pp.~451--559.
	
	\bibitem{Sul17}
	{\sc T.~J. Sullivan}, {\em Well-posed {B}ayesian inverse problems and
		heavy-tailed stable quasi-{B}anach space priors}, Inverse Probl. Imaging, 11
	(2017), pp.~857--874.
	
	\bibitem{TLS18}
	{\sc O.~Teymur, H.~C. Lie, T.~Sullivan, and B.~Calderhead}, {\em Implicit
		probabilistic integrators for {ODE}s}, in Advances in Neural Information
	Processing Systems, 2018, pp.~7244--7253.
	
	\bibitem{TZC16}
	{\sc O.~Teymur, K.~Zygalakis, and B.~Calderhead}, {\em Probabilistic linear
		multistep methods}, in Advances in Neural Information Processing Systems,
	2016, pp.~4321--4328.
	
	\bibitem{TKS19}
	{\sc F.~Tronarp, H.~Kersting, S.~S\"{a}rkk\"{a}, and P.~Hennig}, {\em
		Probabilistic solutions to ordinary differential equations as nonlinear
		{B}ayesian filtering: a new perspective}, Stat. Comput., 29 (2019),
	pp.~1297--1315.
	
	\bibitem{Ver94}
	{\sc R.~Verf\"{u}rth}, {\em A posteriori error estimation and adaptive
		mesh-refinement techniques}, J. Comput. Appl. Math., 50 (1994), pp.~67--83.
	
	\bibitem{Ver13}
	{\sc R.~Verf{\"u}rth}, {\em A posteriori error estimation techniques for finite
		element methods}, Numerical Mathematics and Scientific Computation, Oxford
	University Press, Oxford, 2013.
	
	\bibitem{Vih12}
	{\sc M.~Vihola}, {\em Robust adaptive {M}etropolis algorithm with coerced
		acceptance rate}, Stat. Comput., 22 (2012), pp.~997--1008.
	
	\bibitem{ZiZ92b}
	{\sc O.~C. Zienkiewicz and J.~Z. Zhu}, {\em The superconvergent patch recovery
		and a posteriori error estimates. {I}. {T}he recovery technique}, Internat.
	J. Numer. Methods Engrg., 33 (1992), pp.~1331--1364.
	
	\bibitem{ZiZ92}
	{\sc O.~C. Zienkiewicz and J.~Z. Zhu}, {\em The superconvergent patch recovery
		and a posteriori error estimates. {II}. {E}rror estimates and adaptivity},
	Internat. J. Numer. Methods Engrg., 33 (1992), pp.~1365--1382.
\end{thebibliography}
\end{document}